\documentclass[10pt]{article}
\usepackage{appendix}
\usepackage{mathrsfs}
\usepackage{amsmath}
\usepackage{amsfonts}
\usepackage{enumerate}
\usepackage{setspace}
\usepackage{bbm}
\usepackage{amsthm}
\usepackage{amssymb}
\usepackage{dsfont}
\usepackage{bm}
\usepackage{BOONDOX-cal}
\usepackage{empheq}
\usepackage{authblk}
\numberwithin{equation}{section}

\newtheorem{theorem}{Theorem}[section]
\newtheorem{example}[theorem]{Example}
\newtheorem{definition}[theorem]{Definition}
\newtheorem{lemma}[theorem]{Lemma}
\newtheorem{remark}[theorem]{Remark}
\newtheorem{assumption}[theorem]{Assumption}

\newcommand{\abs}[1]{\left\vert#1\right\vert}

\makeatletter
\def\blfootnote{\gdef\@thefnmark{}\@footnotetext}
\makeatother

\title{Connection between MP and DPP for Mean-Field Stochastic
	Optimal Control Problems}
\author{
	Guangzhi Cui\textsuperscript{1} \qquad 
	Tianyang Nie\textsuperscript{1,*}\qquad 
	Guangchen Wang\textsuperscript{2}}

\date{}

\begin{document}
\maketitle
\blfootnote{\scriptsize \textbf{Funding:} This work was supported by National Key Research and Development Program of China [Grant 2022YFA1006104], the
	National Natural Science Foundation of China [Grants 12371447 and 61925306], and the
	Shandong Provincial Key Laboratory of Stochastic System Control and Scientific Computing.}

\blfootnote{\scriptsize \rlap{\textsuperscript{1}} \hspace{0.3em} School of Mathematics, Shandong University, Jinan 250100, PR China.}
\blfootnote{\scriptsize \rlap{\textsuperscript{2}} \hspace{0.3em} School of Control Science and Engineering, Shandong University, Jinan 250061, PR China.}

\blfootnote{\scriptsize \rlap{*} \hspace{0.3em} Corresponding author.}

\blfootnote{\scriptsize \textbf{Email:} 202111966@mail.sdu.edu.cn (G. Cui), nietianyang@sdu.edu.cn (T. Nie), wguangchen@sdu.edu.cn (G. Wang).}

\begin{abstract}
	This paper deals with the optimal control problem for the general mean-field stochastic differential equations, in which the coefficients depend on both the state process as well as on its law. We focus on the connection between the general maximum principle and the dynamic programming principle for such a control problem. We establish the relationship between the value function of the mean-field optimal control problem and the adjoint equations in the maximum principle. We also give a general mean-field stochastic verification theorem. Some examples are solved explicitly to illustrate our results.
\end{abstract}
\noindent
\textbf{MSC:}  49K45, 49L20,  60H10, 93E20.
\vspace{5pt}\\
\textbf{Keywords:} Mean-field optimal control, Maximum principle, Dynamic programming principle, Verification theorem.

\section{Introduction}
There are generally two methods to study the optimal control problem, one is the Pontryagin’s maximum principle (MP), the other one is the dynamic programming principle (DPP). The connection between MP and DPP of the deterministic optimal control problem can be found in Fleming and Rishel \cite{fleming2012deterministic}. They considered the classical case that the value function is smooth, and obtained that $\psi(t)=-V_x(t, \bar{x}(t))$ and $ V_t(t, \bar{x}(t))=H(t, \bar{x}(t), \bar{u}(t), \psi(t))$, where $\psi$, $H$, $V$, $\bar{u}(t)$ and $\bar{x}(t)$ represent adjoint variable, Hamiltonian function, value function, optimal control and optimal state respectively. In the framework of the viscosity solutions, Barron and Jensen \cite{barron1986pontryagin} first used the viscosity solution to derive the MP from DPP. Zhou \cite{zhou1990maximum} further employed  the notions of subdifferential and supdifferential to reveal their relationship.
For stochastic optimal control problems,  Bensoussan \cite{bensoussan2006lectures}, Yong and Zhou \cite{yong2012stochastic} described the connection between DPP and MP for the classical case, and  Zhou \cite{zhou1990connection,zhou1991unified} further elaborated the relevant results under the framework of viscosity solutions. For stochastic recursive optimal control problem, where the cost functional is described by the solution to a  backward stochastic differential equation (BSDE), we revealed the connection between DPP and MP both under classical framework and viscosity solution framework in our paper \cite{nie2017connection}.\par
In this paper, we consider a class of stochastic control problems for the state processes governed by general McKean-Vlasov equations (also known as mean-field stochastic differential equations) in which the coefficients depend nonlinearly on both the state process and its law in the following form:
\begin{equation}
	X_{t}=\xi+\int_{s}^{t} b\left(r,X_{r},\mathbb{P}_{X_{r}},u_{r}\right)dr+\int_{s}^{t} \sigma\left(r,X_{r},\mathbb{P}_{X_{r}},u_{r}\right)dB_r, \quad t\in \left[s,T\right].\label{intro:1}
\end{equation} 
The pioneering works on the McKean-Vlasov equations are attributed to McKean \cite{mckean1967propagation} and Kac \cite{kac1956foundations}, where they studied uncontrolled SDEs and established the results of propagation of chaos. There has been risen dramatically over the past decade in this topic, due to the connection that it shares with the so-called mean-field game (MFG) theory, introduced independently by Lasry and Lions in \cite{lasry2007mean}, and Huang, Caines and Malhamé in \cite{huang2007nash}. The McKean-Vlasov equations arise naturally when one tries to describe the behavior of many symmetric agents that interact through an empirical distribution of their states. Furthermore, mean-field games arise when the concept of Nash's non-cooperative equilibrium is employed, while mean-field control problems are related to Pareto optimality where players can be identified with a single ``representative agent''. Indeed, these two problems have subtle differences, which were thoroughly investigated by Carmona, Delarue and Lachapelle \cite{carmona2013control}. It's worth noting that Bensoussan, Frehse and Yam \cite{bensoussan2013mean}, Carmona and Delarue \cite{carmona2018probabilistic} provide an overview of recent developments on these two issues.\par 
Our goal is to study the optimal control of McKean-Vlasov equations and to establish the connection between MP and DPP. As is well known, there is much literature about the optimal control problem of McKean-Vlasov equations solved through DPP or MP in recent years. For example, Andersson and Djehiche \cite{andersson2011maximum}, and Buckdahn, Djehiche and Li \cite{buckdahn2011general} studied the MP for the mean-field optimal control problem where the state and the cost functional depend on the expectation of the state process, respectively, with convex control domain and with non-convex control domain. Then, Buckdahn, Li and Ma \cite{buckdahn2016stochastic} extended the results to the general case with non-convex control domain, where the coefficients depend on the law of the state process rather than its expectation, and obtained the corresponding MP. Recently, we studied mean-field partial information non-zero sum stochastic
differential games and obtained the MP by modifying the related second-order adjoint equation in our paper \cite{Nie-Yan-Mean-field-partial-information-non-zero-sum}. For DPP method, due to the nonlinear dependency with respect to the law of the process, the problem is actually time-inconsistent, and Bellman’s optimality principle does not hold. However, although the problem itself is time-inconsistent, some form of DPP can be restored by  changing the state space to density function space or probability measure space. Lauri\'ere and Pironneau \cite{lauriere2014dynamic} first implemented the idea by assuming the existence at all times of a density for the marginal distribution of the state process and reformulated the problem as a density control problem with a family of deterministic controls. Later, Bensoussan, Frehse and Yam \cite{bensoussan2013mean,bensoussan2015master,bensoussan2017interpretation} developed this method further to investigate the well-posedness of the master equation. Under this reformulation they managed to prove a DPP and deduce a Hamilton-Jacobi-Bellman (HJB) equation in the space of density functions. Without the density assumption, most of the current research took the distribution of state process as the variable of value function to achieve the DPP, see e.g., Pham and Wei \cite{pham2018bellman}, Cosso and Pham \cite{cosso2019zero}. In the case with common noise, Pham and Wei \cite{pham2017dynamic} proved a DPP for the case where the control process is adapted to the common noise filtration. Then, Djete, Possama\"i and Tan \cite{djete2022mckean} investigated the path-dependent mean-field optimal control problem and established the DPP under both strong and weak formulations. Later, Cosso et al. \cite{cosso2023optimal} proved the DPP in the infinite-dimensional, non-Markov case.\par
Due to the different technologies used in the MP and DPP for the mean-field optimal control problem, their research and development are relatively independent. There is little research on the connection between the two approaches. To our knowledge, only Carmona and Delarue used some connection in the heuristic sense between DPP and MP of the mean-field optimal control problem in their monograph \cite{carmona2018probabilistic}, where they assumed that the diffusion term $\sigma$ does not depend on the control. Through a combination of probabilistic and analytic approaches, they transformed the mean-field control problem into a standard stochastic control problem by introducing nonlinear Kolmogorov-Fokker-Planck equation and the functional derivative. Then they got the result (see  \cite{carmona2018probabilistic} Volume I, Section 6.2, equation (6.45)):
\begin{equation}
	\left(p_t,q_t\right)_{0\le t\le T}=\left(\partial_{x}u\left(t,X_t\right),\partial_{xx}^2u\left(t,X_t\right)\sigma\left(t,X_t,\mu_t\right)\right)_{0\le t\le T}\label{intro:2},
\end{equation} 
where $\left(p_t,q_t\right)$ solves the related adjoint equation, and $u\left(t,x\right)=\dfrac{\delta v}{\delta m}\left(t,\mu_t\right)\left(x\right)$. Here, $\dfrac{\delta v}{\delta m}$ denotes the linear functional derivative of the value function $v\left(t,\mu_t\right)$ w.r.t the measure, and $\mu_t$ is the marginal
distribution of the optimal path $X_t$ which is given by the nonlinear Kolmogorov-Fokker-Planck equation. Later, by using the pure probabilistic method (see Volume I, Section 6.5 of \cite{carmona2018probabilistic}), the authors defined a new value function $V(t,x,\mu)$ satisfying $v(t,\mu)=\int_{\mathbb{R}^{d}}V(t,x,\mu)\mu(dx)$. By comparing the adjoint equations derived respectively from two value functions $v$ and $V$, they obtained
\begin{equation}
	\left(p_t,q_t\right)_{0\le t\le T}=\left(\partial_{\mu}v\left(t,\mathbb{P}_{X_t};X_t\right),\partial_{x}\partial_{\mu}v\left(t,\mathbb{P}_{X_t};X_t\right)\sigma\left(t,X_t,\mathbb{P}_{X_t}\right)\right)_{0\le t\le T},
\end{equation}
which can also be derived by (\ref{intro:2}) and the relationship between L-derivative and linear functional derivative (see page 534 of \cite{carmona2018probabilistic}). We mention that the connection obtained above is just at a heuristic level and in the case that $\sigma$ does not depend on the control as well as the controls have a feedback form. Thus the aim of this paper is to establish the relationship between MP and DPP for a more general system in which the diffusion term $\sigma$ can depend on the control, and moreover, the controls are not in feedback form. The main difficulties and contributions of our paper are as follows:\par
\begin{itemize}
	\item Firstly, for the study on the connection between MP and DPP, the control system considered in this paper is more general. Different from \cite{bensoussan2006lectures,yong2012stochastic,zhou1990connection}, our system coefficients can depend on the law of state process. Compared with \cite{carmona2018probabilistic}, the diffusion term $\sigma$ in our paper is allowed to depend on the control, and the controls are not in feedback form. Moreover, there is currently little literature for mean-field stochastic verification theorem, for example, \cite{pham2017dynamic} and \cite{pham2018bellman} give the verification theorem of the system in the case with feedback control. By using It\^o's formula along the flow of probability measures and HJB equation (\ref{hjb:1}), we establish a general mean-field stochastic verification theorem that the controls are not necessarily in feedback form.

	\item Secondly, we establish the connection between the first-order adjoint equation of MP and the value function. Compared with  \cite{bensoussan2006lectures,fleming2012deterministic,nie2017connection,yong2012stochastic,zhou1990connection},  the value function is defined on the space of probability measures, and the first-order adjoint equation is a mean-field type BSDE in this paper, thus the classical method is no longer applicable. By using the DPP and the verification theorem obtained in this paper, we derive that the law of optimal state maximizes the left hand side of (\ref{proof:1}) over the space of probability measures (see Theorem \ref{classical}, Theorem \ref{ver} and the proof of Theorem \ref{Th;4.1}), then through perturbing the law of optimal state, we get the first- and second-order maximum conditions (see (\ref{first-m-c}) and (\ref{Bianfen:2})). In order to establish the relationship between the first-order adjoint equation and the value function, we analyze the first-order maximum condition and use Itô's formula to obtain the BSDE for $\partial_{\mu}v(t,\mathbb{P}_{X_t^{\ast}};X_t^{\ast})$, we also need to to reformulate the BSDE for $\partial_{\mu}v(t,\mathbb{P}_{X_t^{\ast}};X_t^{\ast})$ into a form that is similar to the first-order adjoint equation. Unfortunately, we observe that the high-dimensional higher-order L-derivative, which introduces new variables into the independently copied probability space, renders the situation more complex. For example, when analyzing (\ref{first-m-c}) and applying Fubini's theorem to it, we observe that the term $\widetilde{\mathbb{E}}\left[\widetilde{v}_{\mu\mu}^{\ast}\left(t\right)^{\top}\widetilde{b}\left(t\right)\right]$ in (\ref{first zhankai}) is different from the term $\widetilde{\mathbb{E}}\left[\widetilde{v}_{\mu\mu}\left(t\right)\widetilde{b}\left(t\right)\right]$ in (\ref{Th1:dvmu}). More precisely, the positions of the two state variables $X_t^{\ast}$ and $\widetilde{X}_t^{\ast}$ (one from the original probability space and the other from the independently copied probability space) in $v_{\mu\mu}$ are reversed between these two terms, and the $v_{\mu\mu}$ in the former one is the transpose of that in the latter one. The similar situation can be observed in other terms, as well as in the proof of the second-order relationship. To overcome the aforementioned difficulties, we obtain the partial-derivative commutativity properties of L-derivative, and demonstrate the symmetry of the higher-order matrix-valued derivative as well as the exchangeability of variables in the second-order L-derivative between the original probability space and the independently copied probability space, see Lemma \ref{lem:1}, Lemma \ref{lem:2} and Lemma \ref{lem:3}. Finally, we use the uniqueness of solution for mean-field type BSDE to obtain the relationship between the value function and the first-order adjoint equation.

	\item Thirdly, we get the relationship between the second-order adjoint equation of MP and the value function. To our knowledge, this result has not been addressed in the existing literature. The difficulty we encounter here is that higher-order L-derivative naturally introduces numerous additional complex terms (see e.g. \eqref{bianfen:2.1}), and these terms obscure the underlying structure of the second-order maximum conditions, making the comparison theorem for BSDEs not directly applicable. To handle this difficulty, we construct independent random variables with the specifically designed distributions to eliminate these complex terms, see Remark \ref{rem:independ} and the proof of Theorem \ref{TH4.2}. By using a modified second-order adjoint equation introduced in \cite{Nie-Yan-Mean-field-partial-information-non-zero-sum} and comparison theorem for scalar BSDEs, we obtain the connection between the second-order partial derivative of the value function and the second-order adjoint equation. We mention that since it is hard to verify the conditions of the comparison theorem for matrix-valued BSDEs, our result may only work for 1-dimensional case, see Remark \ref{matrix-BSDE-difficult} for more details. We hope we can solve the multidimensional case in the future.
	\item Finally, we give some examples which can be solved explicitly, see Example \ref{exa:1} and Example \ref{exa:2}. Compared with \cite{nie2017connection,yong2012stochastic}, our examples are more complicated due to the appearance of the mean-field terms. Furthermore, we emphasize that, through two different perspectives, these examples provide the underlying reasons for when equality and strict inequality hold in the inequality \eqref{th:2} of Theorem 4.2. 
\end{itemize}

This paper is organized as follows. In Section \ref{section:2}, we recall the notion of L-derivative and formulate the mean-field optimal control problem. In Section \ref{section3}, we
give some preliminary results about the MP and DPP, and provide a more general verification theorem. In Section \ref{section4}, we derive the connection of MP and DPP, and give some examples to explain the results.

\section{Preliminaries and Problem statement}
\label{section:2}

\subsection{Preliminaries}\label{sec:2.1}
Now let us briefly introduce some notations and tools, especially the concept of the derivative of probability measures in mean-field theory and the It\^o's formula along a flow of probability measures.\par
In this paper, let $T>0$ denote an arbitrarily fixed time horizon. 
We consider a complete probability space $(\Omega, \mathcal{F}, \mathbb{P})$, on which we define a m-dimensional Brownian motion $B=\left\{B_t\right\}_{t \in[0, T]}$. We denote by $\mathbb{F}^B=\left(\mathcal{F}_t^B\right)_{t \geq 0}$ the $\mathbb{P}$-completion of the filtration generated by $B$, which is right-continuous, so that it satisfies the usual conditions. For a generic Euclidean space $\mathbb{X}$, we denote its inner product by `` $\cdot$ '', its norm by $|x|$ for any $x\in\mathbb{X}$, and its Borel $\sigma$-field by $\mathcal{B}(\mathbb{X})$. Then for any sub-$\sigma$-field $\mathcal{H} \subseteq \mathcal{F}$ and $1 \leq p<\infty$, we denote by $L^p(\mathcal{H} ; \mathbb{\mathbb{R}}
^d)$ the space of all $\mathbb{\mathbb{R}}^d$-valued, $\mathcal{H}$-measurable random variables $\xi$ with $\|\xi\|_p :=$ $\mathbb{E}\left[|\xi|^p\right]^{1 / p}<\infty$, and denote by $L^{\infty}(\mathcal{H} ; \mathbb{R}
^d)$ the space of all $\mathbb{\mathbb{R}}^d$-valued, $\mathcal{H}$-measurable random variables $\xi$ with $\|\xi\|_{\infty} :=$ $\operatorname{ess\,sup}_{\omega\in\Omega}\abs{\xi(\omega)}<\infty$. In particular, $L^2\left(\mathcal{H} ; \mathbb{R}^d\right)$ is the Hilbert space with inner product $\left\langle\xi, \eta\right\rangle \triangleq \mathbb{E}[\xi \cdot \eta]$, for any $\xi, \eta \in L^2\left(\mathcal{H} ; \mathbb{R}^d\right)$. $\mathcal{P}_2\left(\mathbb{R}^d\right)$ denotes the space of all probability measures $\mu$ over $\left(\mathbb{R}^d, \mathcal{B}\left(\mathbb{R}^d\right)\right)$ with finite second moment, i.e. $\int_{\mathbb{R}^d}|x|^2 \mu(d x)<\infty$. It is noteworthy that the space $\mathcal{P}_2\left(\mathbb{R}^{d}\right)$ is endowed with the 2-Wasserstein metric:
$$
\begin{aligned}
	W_2(\mu, v):= & \inf \left\{\left(\int_{\mathbb{R}^d \times \mathbb{R}^d}|x-y|^2 \rho(d x d y)\right)^{1 / 2}, \rho \in \mathcal{P}_2\left(\mathbb{R}^{2 d}\right)\right. \\
	& \qquad\text { with } \rho\left(\cdot \times \mathbb{R}^d\right)=\mu,\text{ }\rho\left(\mathbb{R}^d \times \cdot\right)=\nu\Bigg\}, \quad \mu, \nu\in \mathcal{P}_2\left(\mathbb{R}^d\right) .
\end{aligned}
$$
To ensure that there is a corresponding random variable for each probability measure in space $\mathcal{P}_2\left(\mathbb{R}^{d}\right)$, we assume that there exists a sub-$\sigma$-algebra $\mathcal{G}$ of $\mathcal{F}$ satisfying the following properties:
\begin{enumerate}[(i)]
	\item $\mathcal{G}$ and $\mathcal{F}_{\infty}^B$ are independent.
	\item  $\mathcal{G}$ is ``rich enough", namely $\mathcal{P}_2\left(\mathbb{R}^d\right)=\left\{\mathbb{P}_{\xi} ,\xi \in L^2\left(\mathcal{G};\mathbb{R}^d \right) \right\}$.
\end{enumerate}
We denote by $\mathbb{F}=\left(\mathcal{F}_t\right)_{t \geq 0}$ the filtration defined as
$
\mathcal{F}_t=\mathcal{G} \vee \mathcal{F}_t^B, t \geq 0.
$ We observe that $\mathbb{F}$ satisfies the usual conditions of $\mathbb{P}$-completeness and right-continuity. 
\begin{remark}\label{rem:independ}
The ``rich enough'' condition ensures the existence on $\mathcal{G}$ of some independent random variables 
$\xi_1$, $\xi_2$, $\cdots$, with arbitrarily prescribed distributions. More precisely, for any given $\mu_1,\mu_2,\cdots$ on some Borel spaces $S_1,S_2,\cdots$, there exists some independent random variables $\xi_1,\xi_2,\cdots$ on $(\Omega,\mathcal{G})$ with distributions $\mu_1,\mu_2,\cdots$.\par
Indeed, assume that $\mathcal{G}$ is ``rich enough" and denote by $\lambda$ the Lebesgue measure on $([0,1],\mathcal{B}([0,1]))$. We can find a random variable $\Gamma_{G}:(\Omega,\mathcal{G})\rightarrow([0,1],\mathcal{B}([0,1]))$ with uniform distribution $\lambda$. In addition, following from Theorem 4.19 of \cite{kallenberg2021foundations}, for the given $\mu_1,\mu_2,\cdots$ on some Borel spaces $S_1,S_2,\cdots$, there exists some mutually independent random variables $\eta_1,\eta_2\cdots$ on $([0,1],\mathcal{B}([0,1]))$ such that $\eta_i:[0,1]\rightarrow S_i$ has distribution $\mu_i$, for $i\ge1$. Then let $\xi_i=\eta_i(\Gamma_G):\Omega\rightarrow S_i$, we state that $\xi_1$, $\xi_2$, $\cdots$ are mutually independent random variables with distribution $\mu_1,\mu_2,\cdots$. In fact, we can get that for any $A_i\in \mathcal{B}(S_i)$, $i\ge1$,
\begin{equation*}
	\mathbb{P}(\xi_i\in A_i)=\mathbb{P}(\eta_i(\Gamma_G)\in A_i)=\mathbb{P}(\Gamma_G\in \eta_i^{-1}(A_i))=\lambda(\eta_i^{-1}(A_i))=\lambda(\eta_i\in A_i)=\mu_i(A_i).
\end{equation*}
Furthermore, for any finite subset $\left\{i_1,\cdots,i_n\right\}\subset\mathbb{N}$ and $B_{i_k}\in \mathcal{B}(S_{i_k})$, $1\le k\le n$, we have 
\begin{equation*}
	\begin{aligned}
		&\mathbb{P}(\xi_{i_1}\in B_{i_1},\cdots,\xi_{i_n}\in B_{i_n})=\mathbb{P}(\eta_{i_1}(\Gamma_G)\in B_{i_1},\cdots,\eta_{i_n}(\Gamma_G)\in B_{i_n})\\
		=&\mathbb{P}(\Gamma_G\in\bigcap_{1\le k\le n}\eta_{i_k}^{-1}(B_{i_k}))=\lambda(\bigcap_{1\le k\le n}\eta_{i_k}^{-1}(B_{i_k}))=\prod_{\substack{1\le k\le n}}\lambda(\eta_{i_k}^{-1}(B_{i_k}))=\prod_{\substack{1\le k\le n}}\mathbb{P}(\xi_{i_k}\in B_{i_k}),
	\end{aligned}
\end{equation*}
 which means that $\xi_1$, $\xi_2$, $\cdots$ are mutually independent random variables.
\end{remark}
\par
Next, we will introduce the differentiability of the probability measure. This approach was pioneered by Lions \cite{lionscours} (the readers can also refer to \cite{carmona2018probabilistic}).

Under the assumption that the probability space $(\Omega, \mathcal{F}, \mathbb{P})$ is rich enough. For any function $f: \mathcal{P}_2\left(\mathbb{R}^d\right) \mapsto \mathbb{R}$, we consider its lift function $f^{\sharp}: L^2\left(\mathcal{F} ; \mathbb{R}^d\right) \mapsto \mathbb{R}$, such that $f^{\sharp}(\theta):=f\left(\mathbb{P}_{\theta}\right)$, which depends only on the law of $\theta \in L^2\left(\mathcal{F} ; \mathbb{R}^d\right)$. We say that a function $f: \mathcal{P}_2\left(\mathbb{R}^d\right) \mapsto \mathbb{R}$ is differentiable at $\mu_0 \in \mathcal{P}_2\left(\mathbb{R}^d\right)$ if its lift $f^{\sharp}$ is Fréchet differentiable at $\theta_0$ with $\theta_0 \in L^2\left(\mathcal{F}, \mathbb{R}^d\right)$ and $\mathbb{P}_{\theta_0}=\mu_0$. More exactly, there exists a continuous linear functional $D f^{\sharp}\left(\theta_0\right): L^2\left(\mathcal{F} ; \mathbb{R}^d\right) \mapsto \mathbb{R}$ such that
$
	f^{\sharp}\left(\theta_0+\eta\right)-f^{\sharp}\left(\theta_0\right)= \left[D f^{\sharp}\left(\theta_0\right)\right]\left(\eta\right) +o\left(\|\eta\|_2\right) \triangleq D_\eta f\left(\mu_0\right)+o\left(\|\eta\|_2\right), 
$
here we denote  the Fréchet derivative of $f$ at $\mu_0$, in the direction $\eta$, by $D_\eta f\left(\mu_0\right)$.\par
In this case, $D_\eta f\left(\mu_0\right)$ which viewed as an element of $L^2\left(\mathcal{F} ; \mathbb{R}^d\right)$, by the Riesz' representation theorem, can be represented as
$
\left[D f^{\sharp}\left(\theta_0\right)\right]\left(\eta\right)=\left\langle\Theta_0, \eta\right\rangle=\mathbb{E}\left[\left(\Theta_0, \eta\right)\right] $, for $\eta \in L^2(\mathcal{F} ; \mathbb{R}^d),
$
where $\Theta_0 \in L^2\left(\mathcal{F} ; \mathbb{R}^d\right)$ is a ($\mathbb{P}\mbox{-}a.s.$) unique random variable. From \cite{cardaliaguet2010notes} or \cite{lionscours}, there exists a Borel function $h\left[\mu_0\right]: \mathbb{R}^d \mapsto \mathbb{R}^d$, which depends only on the law $\mu_0$ $(=\mathbb{P}_{\theta_0})$ but not on the particular choice of the representative $\theta_0$, such that $\Theta_0=h\left[\mu_0\right]\left(\theta_0\right)$. Thus we can denote $\partial_\mu f\left(\mathbb{P}_{\theta_0}; y\right) \triangleq h\left[\mu_0\right](y)$, $y \in \mathbb{R}^d$, and we have the following identities:
$
\Theta_0=h\left[\mathbb{P}_{\theta_0}\right]\left(\theta_0\right)=\partial_\mu f\left(\mathbb{P}_{\theta_0}; \theta_0\right),
$ and $D_\eta f\left(\mathbb{P}_{\theta_0}\right)=\left\langle\partial_\mu f\left(\mathbb{P}_{\theta_0}; \theta_0\right) ,\eta\right\rangle$. We note that for each $\mu \in \mathcal{P}_2\left(\mathbb{R}^d\right), \partial_\mu f\left(\mathbb{P}_{\theta}; \cdot\right)=h\left[\mathbb{P}_\theta\right](\cdot)$ is only defined in  $\mathbb{P}_{\theta}(d y)$-a.e. sense, where $\mu=\mathbb{P}_{\theta}$. In the following we give some useful definitions. 
\begin{definition}	
	We say that $f \in \mathbb{C}^{1}\left(\mathcal{P}_2\left(\mathbb{R}^d\right)\right)$ if for all $\theta \in L^2\left(\mathcal{F} ; \mathbb{R}^d\right)$, there exists a $\mathbb{P}_{\theta}$-modification of $\partial_\mu f\left(\mathbb{P}_{\theta}; \cdot\right)$, denoted by itself, such that $\partial_\mu f: \mathcal{P}_2\left(\mathbb{R}^d\right) \times \mathbb{R}^d \mapsto \mathbb{R}^d$ is continuous.
	\label{def:1}
\end{definition}
Let $f \in \mathbb{C}^{1}\left(\mathcal{P}_2\left(\mathbb{R}^d\right)\right)$, and we denote the mapping $\partial_\mu f:=([\partial_\mu f]_1, \cdots[\partial_\mu f]_d)^{\top}: \mathcal{P}_2\left(\mathbb{R}^d\right) \times \mathbb{R}^d \mapsto \mathbb{R}^d$. Throughout the paper, we also denote $\partial_{\mu}f:=(\partial_{\mu^1} f,\cdots, \partial_{\mu^d} f)^{\top}$ for brevity.
\begin{definition} We define $ \mathbb{C}^{2}\left(\mathcal{P}_2\left(\mathbb{R}^d\right)\right)$ as the set of continuous functions $f\in\mathbb{C}^{1}\left(\mathcal{P}_2\left(\mathbb{R}^d\right)\right)$, such that
	\begin{enumerate}[(i)]
		\item  $\left[\partial_\mu f\right]_i(\cdot ; x) \in \mathbb{C}^{1}\left(\mathcal{P}_2\left(\mathbb{R}^d\right)\right)$, for all $x \in \mathbb{R}^d,$ $1 \leq i \leq d$;
		\item $\partial_\mu f(\mu; \cdot): \mathbb{R}^d \mapsto \mathbb{R}^d$ is differentiable, for every $\mu \in \mathcal{P}_2\left(\mathbb{R}^d\right)$;
		\item $\partial_x \partial_\mu f: \mathcal{P}_2\left(\mathbb{R}^d\right) \times \mathbb{R}^d \mapsto \mathbb{R}^d \times \mathbb{R}^d$ and $\partial_\mu^2 f\left(\mu;x, y\right):=\partial_\mu\left[\partial_\mu f(\cdot; x)\right]\left(\mu; y\right)$ : $\mathcal{P}_2\left(\mathbb{R}^d\right) \times \mathbb{R}^d \times \mathbb{R}^d \mapsto \mathbb{R}^d \times \mathbb{R}^d$ are continuous.
	\end{enumerate}
\end{definition}
We now present the definition for the function $f:\mathbb{R}^d\times\mathcal{P}_2(\mathbb{R}^d)\mapsto\mathbb{R}$. The first-order (resp. second-order) derivative with respect to measure $\mu$ introduces a new variable, and we denote it by $y$ (resp. $z$) such that $\partial_{\mu}^2 f(x, \mu;y, z):\mathbb{R}^d\times\mathcal{P}_2(\mathbb{R}^d)\times\mathbb{R}^d\times\mathbb{R}^d \mapsto\mathbb{R}^d\times\mathbb{R}^d.$
\begin{definition}
	We denote by $f \in \mathbb{C}^{1,1}(\mathbb{R}^d \times \mathcal{P}_2(\mathbb{R}^d) )$ the set of continuous functions $f:\mathbb{R}^d\times\mathcal{P}_2(\mathbb{R}^d)\mapsto \mathbb{R}$, such that
		\begin{enumerate}[(i)]
		\item $ f(x, \cdot)\in \mathbb{C}^{1}\left(\mathcal{P}_2\left(\mathbb{R}^d\right)\right)$, for all $x \in \mathbb{R}^d$;
		\item  $ f(\cdot, \mu)$ is differentiable, for all $\mu \in \mathcal{P}_2(\mathbb{R}^d)$;
		\item all the first-order derivatives are continuous.
	\end{enumerate}
\end{definition}
\begin{definition}
	We denote by $f \in \mathbb{C}^{2,2}(\mathbb{R}^d \times \mathscr{P}_2(\mathbb{R}^d) )$ the set of continuous functions $f:\mathbb{R}^d\times\mathcal{P}_2(\mathbb{R}^d)\mapsto \mathbb{R}$, such that for $1\le i\le d$,
	\begin{enumerate}[(i)]
		\item $\partial_{x^i} f(\cdot, \cdot) \in \mathbb{C}^{1,1}\left(\mathbb{R}^d \times \mathcal{P}_2(\mathbb{R}^d)\right)$;
		\item $[\partial_{\mu} f]_{i}(\cdot, \cdot;y) \in \mathbb{C}^{1,1}\left(\mathbb{R}^d \times \mathscr{P}_2(\mathbb{R}^d)\right)$, for fixed $y\in\mathbb{R}^d$, and $[\partial_{\mu}f]_{i} (x, \mu;\cdot) \in \mathbb{C}^{1}\left( \mathbb{R}^d\right)$, for fixed $(x,\mu)\in \mathbb{R}^d\times\mathcal{P}_2(\mathbb{R}^d);$
		\item all the second-order derivatives are continuous.
	\end{enumerate}
\end{definition}
\begin{definition}
	We denote by $f \in \mathbb{C}_{b,loc}^{2,2}(\mathbb{R}^d \times \mathscr{P}_2(\mathbb{R}^d) )$ the set of continuous functions $f \in \mathbb{C}^{2,2}(\mathbb{R}^d \times \mathscr{P}_2(\mathbb{R}^d) )$ such that all derivatives of $ f$ are locally bounded in $(x,\mu)$ uniformly with respect to $(y,z)$.
\end{definition}
Next, we give the definitions for the function $f:[0,T]\times\mathcal{P}_2(\mathbb{R}^d)\mapsto\mathbb{R}$.
\begin{definition}
	We denote by $C^{1,2}([0,T]\times\mathcal{P}_2(\mathbb{R}^d))$ the set of continuous functions $f:[0,T]\times\mathcal{P}_2(\mathbb{R}^d)\mapsto \mathbb{R}$, such that 
	\begin{enumerate}[(i)]
		\item $f(t,\cdot)\in C^2(\mathcal{P}_2(\mathbb{R}^d))$, for all $t\in[0,T]$;
		\item $f(\cdot,\mu)\in C^1([0,T])$, for all $\mu\in \mathcal{P}_2(\mathbb{R}^d)$;
		\item $\partial_{t}f, \partial_{\mu}f, \partial_{x}\partial_{\mu}f$ and $\partial^2_{\mu}f$ are continuous.
	\end{enumerate}
\end{definition}
\begin{definition}
	We denote by $C^{1,2}_{b}([0,T]\times\mathcal{P}_2(\mathbb{R}^d))$ the set of continuous functions $f\in C^{1,2}([0,T]\times\mathcal{P}_2(\mathbb{R}^d))$ such that  the derivatives $\partial_{t}f, \partial_{\mu}f, \partial_{x}\partial_{\mu}f$ and $\partial^2_{\mu}f$ are bounded.
\end{definition}
\begin{definition}
	We denote by $C^{1,2}_{b,loc}([0,T]\times\mathcal{P}_2(\mathbb{R}^d))$ the set of continuous functions $f\in C^{1,2}([0,T]\times\mathcal{P}_2(\mathbb{R}^d))$ such that 
    the derivatives $\partial_{t}f, \partial_{\mu}f, \partial_{x}\partial_{\mu}f$ and $\partial^2_{\mu}f$ are locally bounded in $\mu$ uniformly with respect to $(t,x,y)$.

\end{definition}

\begin{remark}\label{Cdefine}
	We can define
	 $C^{1,2,2}([0,T]\times\mathbb{R}^d\times\mathcal{P}_2\left(\mathbb{R}^d\right))$, $C^{1,3}_{b,loc}([0,T]\times\mathcal{P}_2(\mathbb{R}^d))$, $\mathbb{C}_{b,loc}^{1,4}([0,T]\times\mathscr{P}_2\left(\mathbb{R}^d\right))$
 according to above procedure. In addition, it is crucial to emphasize that if $f$ belongs to $C^{2}(\mathcal{P}_2(\mathbb{R}^d))$ or $ C^{1,2}([0,T]\times\mathcal{P}_2(\mathbb{R}^d))$,  we denote by `` $\partial_{x}$'' the partial derivative w.r.t. the new variable introduced through L-derivative w.r.t. measure $\mu$. However, if $f\in \mathbb{C}^{2,2}(\mathbb{R}^d \times \mathscr{P}_2(\mathbb{R}^d) )$, we denote by `` $\partial_{x}$'' the partial derivative w.r.t. the intrinsic variable $x$ of $f$, and denote by ``$\partial y$'' the partial derivative w.r.t the new variable introduced through L-derivative.
\end{remark} 
We also recall It\^o's formula along flow of probability measures for the following It\^o process
$$d X_t=b_t d t+\sigma_t d B_t, \qquad X_0 \in L^2(\mathcal{G} ; \mathbb{R}^d),$$
where $b$ and $\sigma$ are $\mathcal{F}_t$-progressively measurable processes valued in $\mathbb{R}^d$ and $\mathbb{R}^{d\times m}$ respectively, such that
$\mathbb{E}\left[\int_{0}^{T}\left(|b_t|^2+|\sigma_t|^4 \right)dt\right]< +\infty.$
Then, if $f \in \mathbb{C}^{2}\left(\mathcal{P}_2\left(\mathbb{R}^d\right)\right)$ satisfying that
$
	\sup_{\mu\in\mathcal{K}}\left[\int_{\mathbb{R}^d}\abs{\partial_{x}\partial_{\mu}f(\mu;x)}^2\mu(dx)\right]\\<+\infty,
$ for any compact subset $\mathcal{K}\subset\mathcal{P}_2\left(\mathbb{R}^d\right)$.
We have (see Theorem 5.92 of \cite{carmona2018probabilistic}):
\begin{equation}
	\frac{d}{d t} f\left(\mathbb{P}_{X_t}\right)=\mathbb{E}\big[b_t \cdot \partial_\mu f\left(\mathbb{P}_{X_t};X_t\right)+\frac{1}{2} \operatorname{tr}\big(\sigma_t \sigma_t^{\top} \partial_x \partial_\mu f\left(\mathbb{P}_{X_t};X_t\right)\big)\big].
\end{equation}
More generally, if function $f\in \mathbb{C}^{1,2,2}\left([0,	T]\times\mathbb{R}^{d} \times\mathscr{P}_2(\mathbb{R}^{d}) \right)$, and $\partial_{\mu}f,\partial_{y}\partial_{\mu}f$ are locally bounded satisfying that for any compact subset $\mathcal{K}\subset\mathbb{R}^d\times\mathcal{P}_2\left(\mathbb{R}^d\right)$, it holds
\begin{equation*}
	\sup_{(t,x,\mu)\in[0,T]\times\mathcal{K}}\left[\int_{\mathbb{R}^d}\abs{\partial_{\mu}f(t,x,\mu;y)}^2+\abs{\partial_{y}\partial_{\mu}f(t,x,\mu;y)}^2\mu(dy)\right]<+\infty,
\end{equation*}
and if $(\xi_t)_{t\in[0,T]}$ is another It\^o process with the dynamic that 
$$d \xi_t=\eta_t d t+\gamma_t d B_t, \qquad \xi_0 \in L^2(\mathcal{G} ; \mathbb{R}^d),$$
where $\eta$ and $\gamma$ are $\mathcal{F}_t$-progressively measurable processes and valued in $\mathbb{R}^d$ and $\mathbb{R}^{d\times m}$ respectively, such that
$\mathbb{P}\left[\int_{0}^{T}\left(|\eta_t|+|\gamma_t|^2 \right)dt <\infty \right] =1.$
Then we can get (see Theorem 5.102 of \cite{carmona2018probabilistic}):
\begin{equation}
	\begin{aligned}
		&df\left(t,\xi_t,\mathbb{P}_{X_t}\right)\\
		=&\partial_{t}f\left(t,\xi_t,\mathbb{P}_{X_t}\right)dt+\partial_{x}f\left(t,\xi_t,\mathbb{P}_{X_t}\right)\cdot\eta_tdt +\dfrac{1}{2}tr\big(\partial_{x}^2 f\left(t,\xi_t,\mathbb{P}_{X_t}\right)\gamma_t \gamma_t^{\top}\big)dt+\partial_x f\left(t,\xi_t,\mathbb{P}_{X_t}\right)\cdot\gamma_tdW_t\\
		&+\widetilde{\mathbb{E}}\left[\partial_{\mu}f\big(t,\xi_t,\mathbb{P}_{X_t};\widetilde{X}_t\big)\cdot \widetilde{b}_t\right]dt+\dfrac{1}{2}\widetilde{\mathbb{E}}\left[tr\big(\widetilde{\sigma}\widetilde{\sigma}^{\top}\partial_{y}\partial_{\mu}f\big(t,\xi_t,\mathbb{P}_{X_t};\widetilde{X}_t\big)\big)\right]dt. \label{extension ito:1}
	\end{aligned}
\end{equation} 
It is noteworthy that the process $(\widetilde{X},\widetilde{b},\widetilde{\sigma})$ in (\ref{extension ito:1}) is a copy of the process $(X,b,\sigma) $ on the probability space ($\widetilde{\Omega},\widetilde{\mathcal{F}},\widetilde{\mathbb{P}}$) which is an independent copy of the probability space $(\Omega,\mathcal{F},\mathbb{P})$. More precisely, for any random variable $\theta$ on $(\Omega, \mathcal{F}, \mathbb{P})$, we denote by $\widetilde{\theta}$ an  independent copy defined on $(\widetilde{\Omega}, \widetilde{\mathcal{F}}, \widetilde{\mathbb{P}})$
that has the same law as $\theta$, i.e., $\widetilde{\mathbb{P}}_{\widetilde{\theta}}=\mathbb{P}_{\theta}$.
Throughout the paper, we consider product probability space $(\Omega \times \widetilde{\Omega} \times \overline{\Omega}, \mathcal{F} \otimes \widetilde{\mathcal{F}}\otimes \overline{\mathcal{F} }, \mathbb{P}\otimes \widetilde{\mathbb{P}}\otimes \overline{\mathbb{P} }) $, where $(\overline{\Omega},\overline{\mathcal{F}},\overline{\mathbb{P}})$ is another independent copy of the probability space $(\Omega,\mathcal{F},\mathbb{P})$ which is also an independent copy of ($\widetilde{\Omega},\widetilde{\mathcal{F}},\widetilde{\mathbb{P}}$), and $\overline{\theta}$ is another  independent copy defined on $(\overline{\Omega}, \overline{\mathcal{F}}, \overline{\mathbb{P}})$
satisfying $\overline{\mathbb{P}}_{\overline{\theta}} =\widetilde{\mathbb{P}}_{\widetilde{\theta}}=\mathbb{P}_{\theta}$.\par
The next goal is to reveal whether the L-derivative has the same partial-derivative commutativity properties as the ordinary derivative. Note that one can easily extend Lemma \ref{lem:1} and Lemma \ref{lem:2} to the multi-dimensional case.
\begin{lemma}\label{lem:1}
	If $f\in \mathbb{C}_{b,loc}^{2,2}\left(\mathbb{R}\times\mathcal{P}_2\left(\mathbb{R}\right)\right)$ ,
	then for any $x \in \mathbb{R}$, $\theta_0$$\in$ $L^{2}(\mathcal{G};\mathbb{R})$, we obtain
	\begin{equation*}
		\partial_{x}\partial_{\mu}f(x,\mathbb{P}_{\theta_0};\widetilde{\theta_0})=\partial_{\mu}\partial_{x}f(x,\mathbb{P}_{\theta_0};\widetilde{\theta_0}),\quad \widetilde{\mathbb{P} }\mbox{-}a.s.
	\end{equation*}
\end{lemma}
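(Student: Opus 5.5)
We prove the identity by computing the mixed second-order variation of the lift in two orders and using uniqueness of the Riesz representative. Fix $x\in\mathbb{R}$ and $\theta_0\in L^2(\mathcal{G};\mathbb{R})$, and consider the lift $F(x,\theta):=f(x,\mathbb{P}_\theta)$ on $\mathbb{R}\times L^2(\mathcal{F};\mathbb{R})$. By the definition of the L-derivative, for every $x$ and every direction $\eta\in L^2$,
\[
F(x,\theta_0+\eta)-F(x,\theta_0)=\mathbb{E}\big[\partial_\mu f(x,\mathbb{P}_{\theta_0};\theta_0)\,\eta\big]+o(\|\eta\|_2).
\]
Also, $\partial_x f(\cdot,\cdot)\in\mathbb{C}^{1,1}$, so $\partial_\mu\partial_x f$ exists and is the L-derivative of $\mu\mapsto\partial_x f(x,\mu)$.

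\textbf{Step 1: differentiate the L-derivative in $x$.} Take $h\in\mathbb{R}$ and $\eta\in L^\infty$ (or $L^2$). By the fundamental theorem of calculus in $x$ applied to the representation of the directional derivative,
\[
\mathbb{E}\big[\partial_\mu f(x+h,\mathbb{P}_{\theta_0};\theta_0)\eta\big]-\mathbb{E}\big[\partial_\mu f(x,\mathbb{P}_{\theta_0};\theta_0)\eta\big]
=\int_0^h \mathbb{E}\big[\partial_x\partial_\mu f(x+s,\mathbb{P}_{\theta_0};\theta_0)\eta\big]\,ds .
\]
Here $\partial_x\partial_\mu f$ is the derivative in the intrinsic variable $x$ of $[\partial_\mu f](\cdot,\mu;y)$, which exists by (ii) of the $\mathbb{C}^{2,2}$ definition.

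\textbf{Step 2: directional derivative of $\partial_x f$ via the mean-value formula.} For $\epsilon\in\mathbb{R}$ we write
\[
\partial_x f(x,\mathbb{P}_{\theta_0+\epsilon\eta})-\partial_x f(x,\mathbb{P}_{\theta_0})
=\int_0^\epsilon \mathbb{E}\big[\partial_\mu\partial_x f(x,\mathbb{P}_{\theta_0+r\eta};\theta_0+r\eta)\eta\big]dr .
\]
Consider the double difference
\[
\Delta(h,\epsilon)=F(x+h,\theta_0+\epsilon\eta)-F(x+h,\theta_0)-F(x,\theta_0+\epsilon\eta)+F(x,\theta_0).
\]
We express $\Delta$ in two ways:
\begin{itemize}
\item integrating first in $\mu$ along $r\mapsto\theta_0+r\eta$ and then in $x$ gives $\int_0^h\!\int_0^\epsilon \mathbb{E}[\partial_x\partial_\mu f(x+s,\mathbb{P}_{\theta_0+r\eta};\theta_0+r\eta)\eta]\,dr\,ds$;
\item integrating first in $x$ and then along the measure path gives the same expression with $\partial_\mu\partial_x f$.
\end{itemize}
Each one-dimensional integration is justified by the chain rule for $r\mapsto f(x,\mathbb{P}_{\theta_0+r\eta})$, which holds because the lift is Fréchet differentiable with continuous derivative, so Fubini applies. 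Dividing by $h\epsilon$ and letting $h,\epsilon\to0$ yields
\[
\mathbb{E}\big[\partial_x\partial_\mu f(x,\mathbb{P}_{\theta_0};\theta_0)\eta\big]=\mathbb{E}\big[\partial_\mu\partial_x f(x,\mathbb{P}_{\theta_0};\theta_0)\eta\big].
\]

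\textbf{Step 3: conclude by Riesz uniqueness.} The identity in Step 2 holds for all $\eta\in L^2(\mathcal{F};\mathbb{R})$. Both integrands are $\sigma(\theta_0)$-measurable and lie in $L^2$, so they coincide $\mathbb{P}$-a.s. Equivalently, the two functions agree $\mathbb{P}_{\theta_0}(dy)$-a.e. Transferring to the independent copy gives the claim $\widetilde{\mathbb{P}}$-a.s. at $\widetilde{\theta_0}$.

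\textbf{Main obstacle: the limit in Step 2.} The integrands are evaluated along the moving point $(x+s,\mathbb{P}_{\theta_0+r\eta};\theta_0+r\eta)$, and we must pass $h,\epsilon\to0$ inside the expectation. We would use the joint continuity of both mixed derivatives together with the $\mathbb{C}^{2,2}_{b,loc}$ hypothesis. The family $\{\mathbb{P}_{\theta_0+r\eta}\}_{|r|\le1}$ is compact in $\mathcal{P}_2$, so the derivatives are bounded uniformly in $y$ on a neighbourhood of $(x,\mathbb{P}_{\theta_0})$. Since $\theta_0+r\eta\to\theta_0$ a.s., dominated convergence with dominating function $C|\eta|\in L^1$ then gives the limit. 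Restricting first to bounded $\eta$ if needed and then extending by density is enough, because the representatives are in $L^2$.
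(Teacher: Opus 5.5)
Your proposal is correct and follows essentially the same route as the paper. The paper also writes the double difference $g(x+\epsilon)-g(x)=G(\mathbb{P}_{\theta_0+\gamma\xi})-G(\mathbb{P}_{\theta_0})$ as two iterated integrals, one involving $\partial_\mu\partial_x f$ and the other $\partial_x\partial_\mu f$, divides by $\gamma\epsilon$, passes to the limit, and concludes from the arbitrariness of the bounded direction $\xi$; your dominated-convergence justification of the limit (compactness of $\{\mathbb{P}_{\theta_0+r\eta}\}_{|r|\le 1}$ plus local boundedness uniform in $y$) is a more explicit version of the paper's appeal to the Lebesgue differentiation theorem.
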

\begin{proof}
	For any $\gamma,\epsilon \in \mathbb{R}$, $\xi\in L^{\infty}(\mathcal{G};\mathbb{R})$, we define  $ g(x)=f(x,\mathbb{P}_{\theta_0+\gamma\xi})-f(x,\mathbb{P}_{\theta_0}) $ and
	$G(\mathbb{P}_{\theta_0})=f(x+\epsilon,\mathbb{P}_{\theta_0})-f(x,\mathbb{P}_{\theta_0}),$
	then we have 
	\begin{equation}\label{gG}
		g(x+\epsilon)-g(x)=G(\mathbb{P}_{\theta_0+\gamma\xi})-G(\mathbb{P}_{\theta_0}).
	\end{equation}
	Due to the fact that
	\begin{equation*}
		\begin{aligned}
			g(x+\epsilon)-g(x)&=\int_{0}^{\epsilon}\partial_{x}g(x+\lambda)d\lambda\\
			&=\int_{0}^{\epsilon}\int_{0}^{\gamma}\widetilde{\mathbb{E}}\left[\partial_{\mu}\partial_{x}f(x+\lambda,\mathbb{P}_{\theta_0+\lambda^{'}\xi};\widetilde{\theta_0}+\lambda^{'}\widetilde{\xi})\widetilde{\xi}\right]d\lambda^{'} d\lambda,
		\end{aligned}
	\end{equation*}
	and
	\begin{equation*}
		\begin{aligned}
			G(\mathbb{P}_{\theta_0+\gamma\xi})-G(\mathbb{P}_{\theta_0})&=\int_{0}^{\gamma}\widetilde{\mathbb{E}}\left[\partial_{\mu}G(\mathbb{P}_{\theta_0+\lambda^{'}\xi};\widetilde{\theta_0}+\lambda^{'}\widetilde{\xi})\widetilde{\xi}\right]d\lambda^{'}\\
			&=\int_{0}^{\gamma}\int_{0}^{\epsilon}\widetilde{\mathbb{E}}\left[\partial_{x}\partial_{\mu}f(x+\lambda,\mathbb{P}_{\theta_0+\lambda^{'}\xi};\widetilde{\theta_0}+\lambda^{'}\widetilde{\xi})\widetilde{\xi}\right])d\lambda d\lambda^{'}.
		\end{aligned}
	\end{equation*}
	We divide both sides of (\ref{gG}) by $\gamma\epsilon$ and let $\gamma\epsilon$ $\rightarrow$ 0. According to the Lebesgue differentiation theorem we obtain
	$
	\widetilde{\mathbb{E}}\left[\partial_{x}\partial_{\mu}f(x,\mathbb{P}_{\theta_0};\widetilde{\theta_0})\widetilde{\xi}\right]=\widetilde{\mathbb{E}}\left[\partial_{\mu}\partial_{x}f(x,\mathbb{P}_{\theta_0};\widetilde{\theta_0})\widetilde{\xi}\right], \text{ for any $\widetilde{\xi}\in L^{\infty}(\mathcal{\widetilde{G}};\mathbb{R})$}.
	$
	Considering the arbitrariness of $\widetilde{\xi}\in L^{\infty}(\widetilde{\mathcal{G}};\mathbb{R})$, we have
	$\partial_{x}\partial_{\mu}f(x,\mathbb{P}_{\theta_0};\widetilde{\theta_0})=\partial_{\mu}\partial_{x}f(x,\mathbb{P}_{\theta_0};\widetilde{\theta_0}),$ $\widetilde{\mathbb{P} }\mbox{-}a.s.$
\end{proof}
Similarly, we can also get 
\begin{lemma}\label{lem:2}
	If $f\in \mathbb{C}_{b,loc}^{1,2}\left([0,T]\times\mathcal{P}_2\left(\mathbb{R}\right)\right)$, and $\partial_{\mu}\partial_{t}f$, $\partial_{t}\partial_{\mu}f$ are continuous and bounded. Then for any $t\in[0,T]$, $\theta_0\in L^{2}(\mathcal{G};\mathbb{R})$, we have
$
		\partial_{t}\partial_{\mu}f(t,\mathbb{P}_{\theta_0}; \theta_0)=\partial_{\mu}\partial_{t}f(t,\mathbb{P}_{\theta_0};\theta_0), \mathbb{P}\mbox{-}a.s.
$
\end{lemma}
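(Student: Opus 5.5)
We mimic the proof of Lemma \ref{lem:1}, with the time variable $t$ playing the role of the intrinsic space variable $x$. Fix $t\in[0,T)$ (at $t=T$ use one-sided increments), $\theta_0\in L^2(\mathcal{G};\mathbb{R})$, and let $\gamma\in\mathbb{R}$, $\epsilon>0$ small, $\xi\in L^\infty(\mathcal{G};\mathbb{R})$. Define
\begin{equation*}
g(s)=f(s,\mathbb{P}_{\theta_0+\gamma\xi})-f(s,\mathbb{P}_{\theta_0}),\qquad G(\mu)=f(t+\epsilon,\mu)-f(t,\mu).
\end{equation*}
These are linked by the algebraic identity $g(t+\epsilon)-g(t)=G(\mathbb{P}_{\theta_0+\gamma\xi})-G(\mathbb{P}_{\theta_0})$.

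\textbf{Expanding the left-hand side.} By $f(\cdot,\mu)\in C^1([0,T])$ we have $g(t+\epsilon)-g(t)=\int_0^\epsilon\partial_t g(t+\lambda)\,d\lambda$. Next apply the chain rule for the lift along the segment $\lambda'\mapsto\theta_0+\lambda'\xi$ to the function $\partial_t f(t+\lambda,\cdot)$. This is legitimate since $\partial_\mu\partial_t f$ exists and is continuous and bounded. It gives
\begin{equation*}
g(t+\epsilon)-g(t)=\int_0^\epsilon\int_0^\gamma\widetilde{\mathbb{E}}\big[\partial_\mu\partial_t f(t+\lambda,\mathbb{P}_{\theta_0+\lambda'\xi};\widetilde\theta_0+\lambda'\widetilde\xi)\,\widetilde\xi\big]\,d\lambda'\,d\lambda .
\end{equation*}

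\textbf{Expanding the right-hand side.} The same chain rule applied to $G$ gives $\int_0^\gamma\widetilde{\mathbb{E}}[\partial_\mu G(\mathbb{P}_{\theta_0+\lambda'\xi};\widetilde\theta_0+\lambda'\widetilde\xi)\widetilde\xi]\,d\lambda'$. Here $\partial_\mu G(\mu;y)=\partial_\mu f(t+\epsilon,\mu;y)-\partial_\mu f(t,\mu;y)$, by linearity of the L-derivative. Since $\partial_t\partial_\mu f$ exists and is continuous, this difference equals $\int_0^\epsilon\partial_t\partial_\mu f(t+\lambda,\mu;y)\,d\lambda$. So the right side becomes the same double integral with $\partial_t\partial_\mu f$ in place of $\partial_\mu\partial_t f$.

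\textbf{Passing to the limit.} Boundedness of both mixed derivatives lets us apply Fubini to swap the $d\lambda$ and $d\lambda'$ integrals. Divide by $\gamma\epsilon$ and let $\gamma,\epsilon\to0$. Continuity of the integrands in $(s,\mu,y)$, together with $W_2(\mathbb{P}_{\theta_0+\lambda'\xi},\mathbb{P}_{\theta_0})\le|\lambda'|\|\xi\|_2$ and $\widetilde\theta_0+\lambda'\widetilde\xi\to\widetilde\theta_0$ a.s., gives pointwise convergence. Boundedness then permits dominated convergence inside $\widetilde{\mathbb{E}}$. In the limit,
\begin{equation*}
\widetilde{\mathbb{E}}\big[\partial_t\partial_\mu f(t,\mathbb{P}_{\theta_0};\widetilde\theta_0)\widetilde\xi\big]=\widetilde{\mathbb{E}}\big[\partial_\mu\partial_t f(t,\mathbb{P}_{\theta_0};\widetilde\theta_0)\widetilde\xi\big]\quad\text{for all }\xi\in L^\infty(\mathcal{G};\mathbb{R}).
\end{equation*}

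\textbf{Concluding.} The difference $\Delta(y)=\partial_t\partial_\mu f(t,\mathbb{P}_{\theta_0};y)-\partial_\mu\partial_t f(t,\mathbb{P}_{\theta_0};y)$ is bounded and Borel. Choose $\xi=\Delta(\theta_0)$, which lies in $L^\infty(\mathcal{G};\mathbb{R})$ because $\theta_0$ is $\mathcal{G}$-measurable. Then $\widetilde{\mathbb{E}}[\Delta(\widetilde\theta_0)^2]=0$, so $\Delta(\widetilde\theta_0)=0$ $\widetilde{\mathbb{P}}$-a.s. Equivalently, $\Delta=0$ $\mathbb{P}_{\theta_0}$-a.e., which is exactly the claim with $\theta_0$ in place of $\widetilde\theta_0$.

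\textbf{Main obstacle.} The delicate step is the right-hand expansion: identifying $\partial_\mu G(\mu;\cdot)$ as the time-increment of $\partial_\mu f$ and writing it as $\int_0^\epsilon\partial_t\partial_\mu f\,d\lambda$. This must hold for a version of $\partial_\mu f$ that is jointly continuous, since $\partial_\mu f(\mu;\cdot)$ is only defined $\mu$-a.e. We resolve this by working with the continuous modification guaranteed by $C^{1,2}_{b,loc}$ throughout. With that version fixed, the identity holds pointwise, and the remaining steps follow as above.
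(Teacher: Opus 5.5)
Your proposal is correct and is the argument the paper intends. The paper proves this lemma ``similarly'' to Lemma~\ref{lem:1}: it uses the identity $g(t+\epsilon)-g(t)=G(\mathbb{P}_{\theta_0+\gamma\xi})-G(\mathbb{P}_{\theta_0})$, expands both sides as double integrals of the two mixed derivatives, and lets $\gamma,\epsilon\to0$. Your limit passage (joint continuity plus dominated convergence, rather than the paper's bare appeal to the Lebesgue differentiation theorem) and your closing choice $\xi=\Delta(\theta_0)$ only make explicit steps that the paper leaves implicit.
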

\begin{lemma}[see Corollary 5.89 of \cite{carmona2018probabilistic}]\label{lem:3}
	Assume that $f \in \mathbb{C}^{2}\left(\mathcal{P}_2(\mathbb{R}^{d})\right)$. Then for any $\mu\in\mathcal{P}_{2}\left(\mathbb{R}^d\right)$ and any $x,y\in\mathbb{R}^{d}$, $\partial_{x}$$\partial_{\mu}$$f$ is a symmetric matrix and $\partial_{\mu}^{2}$$f$$(\mu;x,y)$=$\partial_{\mu}^{2}$$f$$(\mu;y,x)^{\top}$.
	
\end{lemma}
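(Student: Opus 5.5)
We prove the two claims of Lemma \ref{lem:3} with the same device used in the proof of Lemma \ref{lem:1}: write a mixed second-order increment of $f$ in two ways, divide by the product of the two increments, and let both go to zero. Throughout, $\theta\in L^2(\mathcal{G};\mathbb{R}^d)$ with $\mathbb{P}_\theta=\mu$, and the perturbations $\xi,\zeta$ range over $L^\infty(\mathcal{G};\mathbb{R}^d)$.

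\emph{Symmetry of $\partial_x\partial_\mu f$.} Consider the lift $f^\sharp(\theta)=f(\mathbb{P}_\theta)$ on $L^2$. For $\xi,\zeta$ as above and $\gamma,\epsilon\in\mathbb{R}$, form the second difference
\begin{equation*}
\Delta=f^\sharp(\theta+\gamma\xi+\epsilon\zeta)-f^\sharp(\theta+\gamma\xi)-f^\sharp(\theta+\epsilon\zeta)+f^\sharp(\theta).
\end{equation*}
We expand $\Delta$ by first integrating in $\epsilon$, using $\frac{d}{d\lambda}f^\sharp(\theta+\lambda\zeta)=\mathbb{E}[\partial_\mu f(\mathbb{P}_{\theta+\lambda\zeta};\theta+\lambda\zeta)\cdot\zeta]$, and then differentiating the result in $\gamma$. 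The differentiation in $\gamma$ acts through both arguments of $\partial_\mu f(\cdot;\cdot)$. The measure argument contributes $\widetilde{\mathbb{E}}\,\mathbb{E}[\zeta^\top\partial_\mu^2 f(\mu;\theta,\widetilde\theta)\widetilde\xi]$, and the spatial argument contributes $\mathbb{E}[\zeta^\top\partial_x\partial_\mu f(\mu;\theta)\xi]$. Continuity of the derivatives in the definition of $\mathbb{C}^2$, together with dominated convergence, gives
\begin{equation*}
\lim_{\gamma,\epsilon\to0}\frac{\Delta}{\gamma\epsilon}=\mathbb{E}\big[\zeta^\top\partial_x\partial_\mu f(\mu;\theta)\xi\big]+\mathbb{E}\widetilde{\mathbb{E}}\big[\zeta^\top\partial_\mu^2 f(\mu;\theta,\widetilde\theta)\widetilde\xi\big].
\end{equation*}
Since $\Delta$ is symmetric under $(\gamma,\xi)\leftrightarrow(\epsilon,\zeta)$, this bilinear form in $(\xi,\zeta)$ is symmetric.

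\emph{Separating the two terms.} We use the richness of $\mathcal{G}$ (Remark \ref{rem:independ}) to choose the perturbations. Take $\xi=\varphi(\theta)\mathbf{1}_A$ and $\zeta=\psi(\theta)\mathbf{1}_A$, where $A\in\mathcal{G}$ is independent of $\theta$ with $\mathbb{P}(A)=\delta$, and $\varphi,\psi$ are bounded. Realizing $\theta$ through a uniform variable as in the Remark, we may assume $\theta$ is generated on a sub-$\sigma$-field that leaves room for such an $A$. Then the local term scales like $\delta$ while the nonlocal term scales like $\delta^2$. Dividing by $\delta$ and letting $\delta\to0$ isolates $\mathbb{E}[\psi(\theta)^\top\partial_x\partial_\mu f(\mu;\theta)\varphi(\theta)]$, which is therefore symmetric in $(\varphi,\psi)$. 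Since $\varphi,\psi$ are arbitrary, $\partial_x\partial_\mu f(\mu;x)$ is symmetric for $\mu$-a.e. $x$. Continuity then extends this to $x\in\operatorname{supp}\mu$.

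\emph{Symmetry of $\partial_\mu^2 f$.} Once the local terms are known to be symmetric, subtracting them leaves symmetry of the nonlocal term:
\begin{equation*}
\mathbb{E}\widetilde{\mathbb{E}}\big[\psi(\theta)^\top\partial_\mu^2 f(\mu;\theta,\widetilde\theta)\varphi(\widetilde\theta)\big]=\mathbb{E}\widetilde{\mathbb{E}}\big[\varphi(\theta)^\top\partial_\mu^2 f(\mu;\theta,\widetilde\theta)\psi(\widetilde\theta)\big].
\end{equation*}
On the right-hand side, swap the roles of $\theta$ and $\widetilde\theta$, which are i.i.d., and transpose the scalar. This turns the identity into equality of the integrals of $\psi(x)^\top\partial_\mu^2 f(\mu;x,y)\varphi(y)$ and $\psi(x)^\top\partial_\mu^2 f(\mu;y,x)^\top\varphi(y)$ against $\mu\otimes\mu$. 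Products $\psi(x)\varphi(y)$ determine $\mu\otimes\mu$-a.e. equality, so $\partial_\mu^2 f(\mu;x,y)=\partial_\mu^2 f(\mu;y,x)^\top$ $\mu\otimes\mu$-a.e., and continuity extends this to $\operatorname{supp}\mu\times\operatorname{supp}\mu$.

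\emph{Extending to all points, and the main obstacle.} To pass from the supports to every $x,y\in\mathbb{R}^d$ and every $\mu$, we would approximate $\mu$ by full-support measures, for instance $\mu*\mathcal{N}(0,\kappa I)$ with $\kappa\to0$. This uses joint continuity of $\partial_x\partial_\mu f$ and $\partial_\mu^2 f$ in $(\mu,x,y)$, where the derivatives are taken to be the continuous versions from the definition of $\mathbb{C}^2$. We expect the main obstacle to be the rigorous justification of the expansion of $\Delta$: the differentiation in $\gamma$ passes through both arguments of $\partial_\mu f$, and Taylor remainders must be controlled uniformly. We would handle this with the mean-value integral representation used in Lemma \ref{lem:1} and dominated convergence, which for general $f\in\mathbb{C}^2$ may require local boundedness assumptions. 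Alternatively, one can cite Corollary 5.89 of \cite{carmona2018probabilistic} directly.
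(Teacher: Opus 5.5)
The paper does not prove this lemma; it only quotes Corollary 5.89 of \cite{carmona2018probabilistic}. Your argument is therefore an independent, self-contained route, in the same lift-and-perturb spirit as the paper's proof of Lemma \ref{lem:1}, and it works.

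Your scaling heuristic is in fact exact. Take $\xi=\varphi(\theta)\mathbf{1}_A$ and $\zeta=\psi(\theta)\mathbf{1}_A$ with $\mathbb{P}(A)=\delta$, and set $L(\varphi,\psi)=\mathbb{E}[\psi(\theta)^\top\partial_x\partial_\mu f(\mu;\theta)\varphi(\theta)]$ and $N(\varphi,\psi)=\mathbb{E}\widetilde{\mathbb{E}}[\psi(\theta)^\top\partial_\mu^2 f(\mu;\theta,\widetilde\theta)\varphi(\widetilde\theta)]$. Then the bilinear form equals $\delta L(\varphi,\psi)+\delta^2 N(\varphi,\psi)$. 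Symmetry for two distinct values of $\delta$ therefore already forces $L$ and $N$ to be symmetric separately, with no limit needed.

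The citation buys the paper brevity: the delicate points are delegated to \cite{carmona2018probabilistic}. These are which version of the derivatives is meant off the support, and integrability along perturbation paths. Your route buys self-containment: it uses only the definition of $\mathbb{C}^2$ and the richness of $\mathcal{G}$ from Remark \ref{rem:independ}.

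For comparison, the finite-dimensional projection $f^N(x_1,\dots,x_N)=f(\frac1N\sum_i\delta_{x_i})$ avoids both the integrability bookkeeping and the a.e.-to-everywhere upgrade. With $\bar\mu^N=\frac1N\sum_i\delta_{x_i}$, its Hessian blocks are $\frac1N\partial_x\partial_\mu f(\bar\mu^N;x_i)\mathbf{1}_{\{i=j\}}+\frac1{N^2}\partial_\mu^2 f(\bar\mu^N;x_i,x_j)$. Symmetry of the full Hessian makes the $(i,j)$ block the transpose of the $(j,i)$ block. Placing atoms at $x$ and $y$ and letting $N\to\infty$ then gives both identities pointwise. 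The $1/N$ versus $1/N^2$ scaling plays exactly the role of your $\delta$ versus $\delta^2$.

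The one place to tighten is how your justification of the expansion interacts with your density step.

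\emph{The expansion needs no extra hypothesis.} Take $\theta\in L^\infty(\mathcal{G};\mathbb{R}^d)$. The set $\{\mathbb{P}_{\theta+\gamma\xi+\lambda\zeta}:|\gamma|,|\lambda|\le1\}$ is compact in $\mathcal{P}_2(\mathbb{R}^d)$, being a continuous image of $[-1,1]^2$ since $W_2(\mathbb{P}_X,\mathbb{P}_Y)\le\|X-Y\|_2$. All spatial arguments also stay in a fixed ball. So the joint continuity built into $\mathbb{C}^2$ already supplies the bounds for dominated convergence and for continuity of the mixed partial near $(0,0)$.

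\emph{The approximating measures must then be compactly supported.} The Gaussian convolution $\mu*\mathcal{N}(0,\kappa I)$ is not. For it you would again need integrability of $\partial_x\partial_\mu f$ and $\partial_\mu^2 f$, which mere continuity does not give. Use instead, for instance, $\mu_n=(1-\frac1n)(\pi_n)_{\#}\mu+\frac1n\,\mathrm{Unif}(\bar B_R)$, where $\pi_n$ is the projection onto $\bar B_n$ and $R>|x|\vee|y|$. Then $\mu_n\to\mu$ in $W_2$, and each $\mu_n$ has compact support containing $x$ and $y$. Joint continuity then transfers both identities from $(\mu_n;x,y)$ to $(\mu;x,y)$.
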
 \par

\subsection{Problem Statement}
Now we begin to state our control problem and assumptions.
 We denote by $\mathcal{U}$ the set of control processes, namely the family of all $\mathbb{F}$-progressively measurable processes $\mathbf{u}:[0, T] \times \Omega \rightarrow U$, where $U$ is a Polish space.	
For every $s\in \left [ 0,T \right ]$, $\xi \in L^{2} \left ( \mathcal{F} _{s};\mathbb{R}^{d} \right )$, $\mathbf{u}  \in \mathcal{U}$, the state process evolves according to the following controlled McKean-Vlasov stochastic differential equation
\begin{equation}
	X^{s,\xi,\mathbf{u}}_{t}=\xi+\int_{s}^{t} b\big(r,X^{s,\xi,\mathbf{u}}_{r},\mathbb{P}_{X^{s,\xi,\mathbf{u}}_{r}},\mathbf{u}_{r}\big)dt+\int_{s}^{t} \sigma\big(r,X^{s,\xi,\mathbf{u}}_{r},\mathbb{P}_{X^{s,\xi,\mathbf{u}}_{r}},\mathbf{u}_{r}\big)dB_t, \quad t\in \left[s,T\right],\label{ques:1}
\end{equation}
\noindent with the reward functional given by
\begin{equation}
	J\left(s,\xi,\mathbf{u}\right)=\mathbb{E}\left[\int_{s}^{T} f\big(r,X_{r}^{s,\xi,\mathbf{u}},\mathbb{P}_{X_{r}^{s,\xi,\mathbf{u}}},\mathbf{u}_{r}\big)dr+
	h\big(X_{T}^{s,\xi,\mathbf{u}},\mathbb{P}_{X_{T}^{s,\xi,\mathbf{u}}}\big)\right],\label{ques:2}
\end{equation}
where the coefficients $b:[0, T] \times \mathbb{R}^d \times$ $\mathcal{P}_2\left(\mathbb{R}^d\right) \times U\rightarrow \mathbb{R}^d$, $\sigma:[0, T] \times \mathbb{R}^d \times U \rightarrow \mathbb{R}^{d \times m}$, $f:[0, T] \times \mathbb{R}^d \times \mathcal{P}_2\left(\mathbb{R}^d\right) \times U \rightarrow \mathbb{R}$, and $h: \mathbb{R}^d \times \mathcal{P}_2\left(\mathbb{R}^d\right) \rightarrow \mathbb{R}$ satisfy the following assumptions and we denote $b=(b^1,\dots,b^d)^{\top}$, $\sigma=(\sigma^1,\dots,\sigma^m)=[\sigma^{i,j}]_{1\le i\le d ,1\le j \le m}$.
\begin{assumption}
	The coefficients $b, \sigma, f, h$ are  jointly measurable, uniformly continuous and uniformly bounded in all variables. For all $(t,u) \in [0,T]\times U$, we have $b^i(t,\cdot, \cdot, u)$, $\sigma^{i,j}(t,\cdot, \cdot, u)$, $f(t,\cdot, \cdot, u)$, $h(\cdot, \cdot) \in \mathbb{C}^{1,1}(\mathbb{R}^d \times \mathcal{P}_2(\mathbb{R}^d) )$.
	Moreover, all the first-order derivatives are uniformly bounded and Lipschitz
	continuous in all variables.
	\label{ass:3}
\end{assumption}
\begin{assumption}
	The coefficients $b, \sigma, f, h$ satisfy Assumption \ref{ass:3}. For all $(t,u) \in [0,T]\times U$, it follows that $b^i(t,\cdot, \cdot, u), \sigma^{i,j}(t,\cdot, \cdot, u), f(t,\cdot, \cdot, u),h(\cdot, \cdot) \in \mathbb{C}^{2,2}\left(\mathbb{R}^d \times\mathscr{P}_2(\mathbb{R}^d)\right)$. Furthermore, all the second-order derivatives are uniformly bounded and Lipschitz
	continuous in all variables.
	\label{ass:4}
\end{assumption}
Our optimal control problem can be stated as follows.\par
\noindent\textbf{Problem ($\mathbf{M_{s\xi}}$).} For a given $(s,\xi)\in [0,T]\times L^{2} \left ( \mathcal{F} _{s} ;\mathbb{R}^{d} \right )$, minimize (\ref{ques:2}) over $\mathcal{U}$ subject to the state equation (\ref{ques:1}).\par
We introduce the function V, to which we refer as the lifted value function, defined as
\begin{equation}\label{lifted v}
	V\left(s,\xi\right)=\inf_{\mathbf{u} \in \mathcal{U}}J\left(s,\xi,\mathbf{u}\right), \quad\quad 
	\forall\left(s,\xi\right) \in [0,T]\times L^{2}  ( \mathcal{F} _{s} ;\mathbb{R}^{d}  ).
\end{equation}
Any $\mathbf{u}^{\ast}\in \mathcal{U}$ satisfying 
$
	J\left(s,\xi,\mathbf{u}^{\ast}\right)=\inf\limits_{\mathbf{u} \in \mathcal{U}}J\left(s,\xi,\mathbf{u})\right)
$ is called an optimal control, and the corresponding $X^{\ast}\equiv X^{s,\xi,\mathbf{u}^{\ast}}$ is called optimal state and $(X^{\ast},\mathbf{u}^{\ast})$ is called an optimal pair.\par

\section{MP and DPP for Mean-Field Optimal Control Problems}
\label{section3}
In this section, we first present the maximum principle for the mean-field optimal control problem, as given by \cite{buckdahn2016stochastic}. We mention that we use the modified form of the second-order adjoint equation proposed in \cite{Nie-Yan-Mean-field-partial-information-non-zero-sum}. Then, we give the dynamic programming principle and the corresponding HJB equation, as shown in \cite{cosso2023optimal}. Finally, we present a general verification theorem.
\subsection{Stochastic Maximum Principle} \label{subsection3.1}\label{sec:3.1}
 We introduce the first-order and second-order adjoint equations before presenting the maximum principle. Compared with \cite{buckdahn2016stochastic}, we present the specific forms of MP in multi-dimensional case without proof. Now we introduce some simplified notations. Let $\phi$ be a generic function representing $f$, $h$, $[b^{i}]$, $[\sigma^{i,j}]$, and let $\left(X^*,\mathbf{u}^*\right)$ be an optimal pair. We denote\par
 
 \begin{equation*}\label{dao:1}
 	\left\{
 	\begin{aligned}
 			&\phi(t)=\phi(t,X_t^{\ast},\mathbb{P}_{X_t^{\ast}},\mathbf{u}_t^{\ast}),\\
 		&\widetilde{\phi}(t)=\phi(t,\widetilde{X}_t^{\ast},\mathbb{P}_{X_t^{\ast}},\widetilde{\mathbf{u}}_t^{\ast}),\\
 		&\phi_x(t)=\partial_x \phi\left(t,X_t^*, \mathbb{P}_{X_t^*}, \mathbf{u}_t^* \right) ,
 	\end{aligned}
 	\right.
 	\quad
 	\left\{
 	\begin{aligned}
 		&\tilde{\phi}_x(t)=\partial_x \phi(t,\widetilde{X}_t^*, \mathbb{P}_{X_t^*}, \widetilde{\mathbf{u}}_t^*),\\
 		&\tilde{\phi}_\mu(t)=\partial_\mu \phi(t,X_t^*, \mathbb{P}_{X_t^*}, \mathbf{u}_t^* ; \widetilde{X}_t^*) ,\\
 		& \tilde{\phi}_\mu^*(t)=\partial_\mu \phi(t,\widetilde{X}_t^*, \mathbb{P}_{X_t^*}, \widetilde{\mathbf{u}}_t^* ; X_t^*),
 	\end{aligned}
 	\right.
 \end{equation*}
 and for $\psi=b$, $[\sigma^j]$, \begin{equation*}\label{dao:2}
 	\left\{
 	\begin{aligned}
 		&\psi_x(t)=([\psi^1]_x(t),\cdots,[\psi^d]_x(t))^{\top} ,\\
 		&\tilde{\psi}_x(t)=([\widetilde{\psi^1}]_x(t),\cdots,[\widetilde{\psi^d}]_x(t))^{\top},
 	\end{aligned}
 	\right.
 	\quad
 	\left\{
 	\begin{aligned}
 		&\tilde{\psi}_\mu(t)=([\widetilde{\psi^1}]_{\mu}(t),\cdots,[\widetilde{\psi^d}]_{\mu}(t))^{\top} ,\\
 		& \tilde{\psi}_\mu^*(t)=([\widetilde{\psi^1}]^{\ast}_{\mu}(t),\cdots,[\widetilde{\psi^d}]^{\ast}_{\mu}(t))^{\top}.
 	\end{aligned}
 	\right.
 \end{equation*} We denote their second-order derivatives by
 \begin{equation*}
 	\left\{
 	\begin{aligned}
 		&\phi_{xx}(t)=\partial_x^2 \phi\big(t,X_t^*, \mathbb{P}_{X_t^*}, \mathbf{u}_t^* \big), \\
 		&\widetilde{\phi}_{xx}(t)=\partial_x^2 \phi\big(t,\widetilde{X}_t^*, \mathbb{P}_{X_t^*}, \widetilde{\mathbf{u}}_t^* \big),\\
 		&\widetilde{\phi}_{x\mu}(t)=\partial_x\partial_{\mu} \phi(t,X_t^*, \mathbb{P}_{X_t^*}, \mathbf{u}_t^* ;\widetilde{X}_t^*),\\
 		&\widetilde{\phi}^{\ast}_{x\mu}(t)=\partial_x\partial_{\mu} \phi(t,\widetilde{X}_t^*, \mathbb{P}_{X_t^*}, \widetilde{\mathbf{u}}_t^* ;X_t^*),
 	\end{aligned}
 	\right.
 	\quad
 	\left\{
 	\begin{aligned}
 		&\widetilde{\phi}_{y \mu}(t)=\partial_y \partial_\mu \phi\left(t,X_t^*, \mathbb{P}_{X_t^*}, \mathbf{u}_t^* ; \widetilde{X}_t^*\right),\\
 		&\widetilde{\phi}_{y \mu}^*(t)=\partial_y \partial_\mu \phi\left(t,\widetilde{X}_t^*, \mathbb{P}_{X_t^*}, \widetilde{\mathbf{u}}_t^* ; X_t^*\right),\\
 		&\overline{\phi}_{\mu\mu}(t)=\partial_{\mu}^2 \phi(t,X_t^*, \mathbb{P}_{X_t^*}, \mathbf{u}_t^* ;\widetilde{X}_t^*,\overline{X}_t^{\ast}).
 	\end{aligned}
 	\right.
 \end{equation*}
Consider the first-order adjoint equation, which is the following mean-field type linear
BSDE:
\begin{equation}
	\left\{
	\begin{aligned}
		dp_{t} =&-\bigg\{b_{x}\left(t\right)^{\top} p_{t}+\widetilde{\mathbb{E}}\left[\widetilde{b_{\mu}^{\ast}}\left(t\right)^{\top} \widetilde{p}_{t}\right]+\sum_{j=1}^{m}[\sigma^{j}]_{x}\left(t\right)^{\top} q^j_{t}+\sum_{j=1}^{m}\widetilde{\mathbb{E}}\left[[\widetilde{\sigma^{j}}]^{\ast}_{\mu}\left(t\right)^{\top}\widetilde{q}^j_{t}\right] \\
		&
		-f_{x}\left(t\right)-\widetilde{\mathbb{E}}\left[\widetilde{f}_{\mu}^{\ast}\left(t\right)\right]\bigg\} dt+q_{t}dB_{t},
		\\
		p_{T}=&-h_{x}\left(T\right)-\widetilde{\mathbb{E}}\left[\widetilde{h}_{\mu}^{\ast}\left(T\right)\right],\label{bansui:1}
	\end{aligned}
	\right.
\end{equation}
We know that according to Theorem 3.1 of \cite{buckdahn2009mean}, under Assumption \ref{ass:3}, BSDE (\ref{bansui:1}) admits a unique $\mathbb{F}$-adapted solution $(p, q)$ which takes values in $ (\mathbb{R}^{d},\mathbb{R}^{d\times m})$ and satisfies
$
	\mathbb{E}\Big[\sup\limits_{t\in[0,T]}\abs{p_t}^2+\int_{0}^{T}\abs{q_t}^2dt\Big]<\infty.
$

Let us define the following  Hamiltonian function $H:[0,T]\times\mathbb{R}^d \times\mathcal{P}_{2}(\mathbb{R}^d)\times U\times\mathbb{R}^{d}\times\mathbb{R}^{d\times m}$ as
\begin{equation*}\label{Hdefine}
	H\left ( t,x,\mu ,u,p,q \right ) := b\left ( t,x,\mu ,u \right )\cdot  p+  tr\left(q^{T}\sigma \left ( t,x,\mu,u\right )\right)-f\left (t, x,\mu,u \right ),
\end{equation*}
and denote, for $t \in[s, T]$,
\begin{equation}\label{dao:3}
	\left\{\begin{array}{l}
		H_{x x}(t):=\sum_{i=1}^{d}[b^i]_{x x}(t) p^i_t+\sum_{j=1}^{m}\sum_{i=1}^{d}[\sigma^{i,j}]_{x x}(t)  q^{i,j}_t-f_{x x}(t) ,\\
		\widetilde{H}^{\ast}_{y \mu}(t):=\sum_{i=1}^{d}[\widetilde{b^i}]^{\ast}_{y \mu}(t) \widetilde{p}^i_t+\sum_{j=1}^{m}\sum_{i=1}^{d}[\widetilde{\sigma}^{i,j}]^{\ast}_{y \mu}(t) \widetilde{q}^{i,j}_t-\widetilde{f}^{\ast}_{y \mu}(t) .
	\end{array} \right.
\end{equation}
Then we consider the following second-order adjoint equation
\begin{equation}
	\left\{\begin{aligned}
		d P_t= & -\bigg\{\left(b_x(t)\right)^{\top} P_t+P_t b_x(t)+\sum_{j=1}^{m}[\sigma^{j}]_{x}^{\top}(t) P_t[\sigma^{j}]_{x}(t)+\sum_{j=1}^{m}\left([\sigma^{j}]_x(t)\right)^{\top} Q_t^{j} \\
		& +\sum_{j=1}^{m}Q_t^{j}[\sigma^{j}]_x(t) +\left(H_{x x}(t)+\widetilde{\mathbb{E}}\left[\widetilde{H}_{ y\mu}^*(t)\right]\right)\bigg\} d t+\sum_{j=1}^{m}Q^{j}_t d B^j_t,\\
		P_T= &- h_{x x}(T)-\widetilde{\mathbb{E}}\left[\widetilde{h}_{y\mu}^*(T)\right].
	\end{aligned}\right.\label{bansui:2}
\end{equation}
Note that (\ref{bansui:2}) is a standard linear BSDE. It is well-known that it has a unique
$\mathbb{F}$-adapted solution $(P_t, Q_t)$ satisfying
$
	\mathbb{E}\Big[\sup\limits_{t\in[0,T]}\abs{P_t}^2+\sum_{j=1}^{m}\int_{0}^{T}|Q^j_t|^2dt\Big]<\infty.
$

\begin{remark}
	The second-order adjoint equation (\ref{bansui:2}) is different from the one in paper \cite{buckdahn2016stochastic}, specifically equation (\ref{bansui:2}) does not contain terms involving $\widetilde{\mathbb{E}}\big[\widetilde{b}_{\mu}^*(t)\big]$ and $\widetilde{\mathbb{E}}\big[\widetilde{\sigma}_{\mu}^*(t)\big]$. For more details, interested readers can refer to Remark 2.3 of \cite{Nie-Yan-Mean-field-partial-information-non-zero-sum} .
\end{remark}
At the end of this subsection, we recall the stochastic maximum principle given in \cite{buckdahn2016stochastic}, where we use the modified second-order adjoint equation (see \cite{Nie-Yan-Mean-field-partial-information-non-zero-sum}).
\begin{theorem}\label{MP}[Stochastic Maximum Principle]. Suppose that Assumption \ref{ass:4} holds. Let $( X^{\ast},\mathbf{u}^{\ast})$ be an optimal pair of the \textbf{Problem} $\bm{(}\bm{M_{s\xi}}\bm{)}$,
	then there are two pairs of $\mathbb{F}$-adapted processes $(p, q)$ and $(P, Q)$ which solve (\ref{bansui:1}) and (\ref{bansui:2}) respectively, such that for all u $\in$U and a.e. $t \in[s, T]$, it holds $\mathbb{P}$-a.s. that\\
	\begin{equation}
		\begin{aligned}
			\begin{aligned}
				&H\left(t,X_t^*, \mathbb{P}_{X_t^*}, u, p_t, q_t\right)-H\left(t,X_t^*, \mathbb{P}_{X_t^*}, \mathbf{u}_t^*, p_t, q_t\right)+\frac{1}{2}\left[\Delta \sigma^{*, u}(t, \cdot)\right]^T P_t\left[\Delta \sigma^{*, u}(t, \cdot)\right] \leq 0 ,
			\end{aligned}
		\end{aligned}	   
	\end{equation}
	where $\Delta \sigma^{*, u}(t, \cdot):=\sigma\left(t, X_t^*, \mathbb{P}_{X_t^*}, u\right)-\sigma\left(t, X_t^*, \mathbb{P}_{X_t^*}, \mathbf{u}_t^*\right)$.
\end{theorem}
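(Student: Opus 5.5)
The plan is the spike-variation method of Buckdahn, Li and Ma, adapted so that the second-order adjoint equation comes out in the modified form (\ref{bansui:2}). Fix $u\in U$, $\tau\in[s,T)$ and $\varepsilon>0$. Let $E_\varepsilon=[\tau,\tau+\varepsilon]$ and define the perturbed control $\mathbf{u}^\varepsilon_t=u\mathbf{1}_{E_\varepsilon}(t)+\mathbf{u}^*_t\mathbf{1}_{E_\varepsilon^c}(t)$. Let $X^\varepsilon$ be the state driven by $\mathbf{u}^\varepsilon$.

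\textbf{Expansion of the state.} I will write $X^\varepsilon-X^*=y^\varepsilon+z^\varepsilon+r^\varepsilon$.

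The first-order variation $y^\varepsilon$ solves a linear mean-field SDE with the following structure:
\begin{itemize}
\item drift $b_x(t)y_t+\widetilde{\mathbb{E}}[\widetilde b_\mu(t)\widetilde y_t]$;
\item diffusion $[\sigma^j]_x(t)y_t+\widetilde{\mathbb{E}}[[\widetilde{\sigma^j}]_\mu(t)\widetilde y_t]+\Delta\sigma^{j}(t)\mathbf{1}_{E_\varepsilon}$.
\end{itemize}

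The second-order variation $z^\varepsilon$ collects:
\begin{itemize}
\item the terms $\Delta b\,\mathbf{1}_{E_\varepsilon}$ and $\Delta\sigma_x y\,\mathbf{1}_{E_\varepsilon}$;
\item the quadratic terms $\frac12\phi_{xx}(y,y)$ and $\frac12\widetilde{\mathbb{E}}[\widetilde\phi_{y\mu}(\widetilde y,\widetilde y)]$.
\end{itemize}

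Under Assumption \ref{ass:4}, the standard moment estimates give
\begin{itemize}
\item $\mathbb{E}\sup|y^\varepsilon|^{2k}=O(\varepsilon^k)$ and $\mathbb{E}\sup|z^\varepsilon|^{2k}=O(\varepsilon^{2k})$;
\item the key mean-field estimate $\sup_t|\mathbb{E}[y^\varepsilon_t]|=O(\varepsilon)$, which comes from $\mathbb{E}[\Delta\sigma\mathbf{1}_{E_\varepsilon}dB]=0$;
\item $\mathbb{E}\sup|r^\varepsilon|^2=o(\varepsilon^2)$.
\end{itemize}
The last bound uses the Taylor expansion in the measure variable with the L-derivative, together with the Lipschitz continuity of the second derivatives.

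\textbf{Expansion of the cost.} Next I will expand $0\le J(s,\xi,\mathbf{u}^\varepsilon)-J(s,\xi,\mathbf{u}^*)$ to order $o(\varepsilon)$. The terms $\widetilde{\mathbb{E}}[\widetilde\phi_{\mu\mu}(\widetilde y,\overline y)]$ and $\widetilde{\mathbb{E}}[\widetilde\phi_{x\mu}\,y\,\widetilde y]$ involve two independent copies. By independence they factor through $\mathbb{E}[y]$ and are therefore $O(\varepsilon^2)$; this is what lets them be dropped.

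I will then apply Itô's formula to $p_t\cdot(y_t+z_t)$, with $p$ from (\ref{bansui:1}). Using Fubini to swap the copies turns $\mathbb{E}\widetilde{\mathbb{E}}[p\cdot\widetilde b_\mu\widetilde y]$ into $\mathbb{E}[\widetilde{\mathbb{E}}[\widetilde b_\mu^{*\top}\widetilde p]\cdot y]$, and similarly for $\sigma$ and $f$. This is why the starred derivatives appear in (\ref{bansui:1}). The first-order terms cancel, leaving
$\mathbb{E}\int_\tau^{\tau+\varepsilon}[\Delta H+\text{quadratic }y\text{-terms}]dt$.

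To handle the remaining quadratic part $\frac12\mathbb{E}\int y^\top(H_{xx}+\widetilde{\mathbb{E}}[\widetilde H^*_{y\mu}])y\,dt$, I will apply Itô's formula to $y_t^\top P_ty_t$, with $P$ solving (\ref{bansui:2}). The cross-copy contributions in $d(y^\top Py)$ involve $\widetilde{\mathbb{E}}[\widetilde b_\mu\widetilde y]$. After conditioning, these are bounded by $|\mathbb{E}[y]|\cdot\|y\|=O(\varepsilon^{3/2})=o(\varepsilon)$, which explains why the modified equation needs no $\widetilde{\mathbb{E}}[\widetilde b^*_\mu]$ terms. The surviving term is $\frac12\mathbb{E}\int_{E_\varepsilon}\Delta\sigma^\top P\Delta\sigma\,dt$. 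Collecting everything gives
$$0\le -\mathbb{E}\int_\tau^{\tau+\varepsilon}\Big[\Delta H(t)+\tfrac12\Delta\sigma^\top P_t\Delta\sigma\Big]dt+o(\varepsilon).$$

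\textbf{Localization and main obstacle.} Dividing by $\varepsilon$ and applying Lebesgue differentiation gives the inequality in expectation for a.e. $\tau$. To obtain the $\mathbb{P}$-a.s. statement, I will replace $u$ by $u\mathbf{1}_A+\mathbf{u}^*\mathbf{1}_{A^c}$ with $A\in\mathcal F_\tau$, and then use a countable dense subset of the Polish space $U$ together with continuity in $u$.

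The main obstacle is the $o(\varepsilon)$ control of the mean-field cross terms. This includes the $\partial^2_\mu$ and $\partial_x\partial_\mu$ terms and the $\widetilde b_\mu$, $\widetilde\sigma_\mu$ terms in $d(y^\top Py)$. The approach relies throughout on the estimate $|\mathbb{E}y^\varepsilon_t|=O(\varepsilon)$, which I will prove first.
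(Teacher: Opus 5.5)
\textbf{There is a genuine gap, in the step you flag as the main obstacle.} Your overall scheme matches the Buckdahn--Li--Ma route the paper relies on; the paper states Theorem \ref{MP} without proof, citing \cite{buckdahn2016stochastic} and \cite{Nie-Yan-Mean-field-partial-information-non-zero-sum}. That scheme is: spike variation, Peng-type expansion, Fubini to swap copies, $y^\top P y$ for the quadratic part, and localization. The copy-swap is indeed where the starred terms in (\ref{bansui:1}) and $\widetilde{\mathbb{E}}[\widetilde H^*_{y\mu}]$ in (\ref{bansui:2}) come from.

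The mean-field cross terms do not reduce to $\mathbb{E}[y^\varepsilon_t]$, because their coefficients depend on the copied state. The terms in question are:
\begin{itemize}
\item $\widetilde{\mathbb{E}}\overline{\mathbb{E}}[\partial^2_\mu\phi(t,X^*_t,\mathbb{P}_{X^*_t},\mathbf{u}^*_t;\widetilde X^*_t,\overline X^*_t)\widetilde y_t\overline y_t]$;
\item $y_t\,\widetilde{\mathbb{E}}[\partial_x\partial_\mu\phi(t,X^*_t,\mathbb{P}_{X^*_t},\mathbf{u}^*_t;\widetilde X^*_t)\widetilde y_t]$;
\item in $d(y^\top Py)$, $\widetilde{\mathbb{E}}[\widetilde b_\mu(t)\widetilde y_t]$ and $\widetilde{\mathbb{E}}[\widetilde\sigma_\mu(t)\widetilde y_t]$.
\end{itemize}
Since $\widetilde y_t$ is correlated with $\widetilde X^*_t$, independence of the copies at best splits the double-copy term into products of quantities like $\mathbb{E}[g(X^*_t)y_t]$. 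All these terms have the form $\widetilde{\mathbb{E}}[\Psi_t\widetilde y_t]$ for a bounded random field $\Psi_t=\Psi(t,\omega,\widetilde\omega)$. They are multiples of $\mathbb{E}[y_t]$ only when the law enters through its mean. Even then, $y_t\,\mathbb{E}[y_t]$ would be $O(\varepsilon^{3/2})$, not $O(\varepsilon^2)$.

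Second, $\sup_t|\mathbb{E}[y^\varepsilon_t]|=O(\varepsilon)$ does not follow from $\mathbb{E}[\int\Delta\sigma\mathbf{1}_{E_\varepsilon}dB]=0$. Taking expectations in the $y$-equation leaves $\mathbb{E}[b_x(t)y_t]+\mathbb{E}\widetilde{\mathbb{E}}[\widetilde b_\mu(t)\widetilde y_t]$. This is not a function of $\mathbb{E}[y_t]$, because $b_x(t)$ and $\widetilde b_\mu(t)$ are random through $X^*_t$, $\mathbf{u}^*_t$ and $\widetilde X^*_t$, so no Gronwall argument closes.

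The usable tool is duality. For bounded $\mathcal{F}_t$-measurable $\zeta$, one has $\mathbb{E}[\zeta\cdot y_t]=\mathbb{E}\int_{E_\varepsilon}\kappa_r\cdot\Delta\sigma(r)\,dr$, where $\kappa$ is the martingale component of the mean-field adjoint BSDE with terminal value $\zeta$. For a merely progressively measurable $\mathbf{u}^*$, this $\kappa$ is in general only square integrable, not bounded. That gives $o(\sqrt\varepsilon)$, not $O(\varepsilon)$ uniformly in $\tau$.

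What the argument actually needs is a lemma of the form $\sup_t\|\widetilde{\mathbb{E}}[\Psi_t\widetilde y^\varepsilon_t]\|_{L^{2k}(\Omega)}=o(\sqrt\varepsilon)$ for the relevant fields. An example is $\Psi=\partial_\mu b(t,x,\mathbb{P}_{X^*_t},u;\widetilde X^*_t)$, with the bound uniform in the parameters $(x,u)$ coming from the original space. Such a lemma follows from the duality above plus equi-integrability. Each cross term is then $O(\sqrt\varepsilon)\cdot o(\sqrt\varepsilon)=o(\varepsilon)$, after H\"older with the moment bounds on $y$, $P$ and $Q$.

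The same lemma is needed for $\mathbb{E}\sup|r^\varepsilon|^2=o(\varepsilon^2)$, since your $z^\varepsilon$ omits the cross terms. As written, the reduction to $\mathbb{E}[y]$ is the step that fails. With it goes your justification both for the remainder estimate and for dropping the $\widetilde{\mathbb{E}}[\widetilde b^*_\mu]$ and $\widetilde{\mathbb{E}}[\widetilde\sigma^*_\mu]$ terms from the equation for $P$.
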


\subsection{Dynamic Programming Principle}
Regarding the DPP of mean-field optimal control problem, the method currently acknowledged is to define the value function on the space of probability measures. The difficulty of this method lies in proving that lifted value function $V$ given by (\ref{lifted v}) is law-invariant, i.e., depends only on the distribution of $\xi$. It should be noted that this conclusion is not trivial. In the case where the control has a feedback form, one can refer to \cite{pham2018bellman} for the law invariance property. For the general system considered in this paper, the following conclusion can be referred to Theorem 3.6 of \cite{cosso2023optimal}.
\begin{theorem}[Law invariance property]\label{law:1}
	Suppose that Assumption \ref{ass:4} holds. Then, the map V satisfies the law invariance property: for every $s$ $\in$ $\left[0,T\right]$ and $\xi,\xi^{'}\in L^{2} \left ( \mathcal{F} _{s} ;\mathbb{R}^{d} \right )$, with $\mathbb{P}_{\xi}=\mathbb{P}_{\xi^{'}}$, it holds that $V(s,\xi)$=$V(s,\xi^{'}).$
\end{theorem}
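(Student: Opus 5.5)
We need to show that $V(s,\xi)$ depends only on $\mathbb{P}_\xi$. By symmetry it suffices to show $V(s,\xi')\le V(s,\xi)$, that is, for every $\mathbf{u}\in\mathcal{U}$ and every $\varepsilon>0$ we must find $\mathbf{u}'\in\mathcal{U}$ with $J(s,\xi',\mathbf{u}')\le J(s,\xi,\mathbf{u})+\varepsilon$.

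The basic observation is that $J(s,\xi,\mathbf{u})$ depends only on the joint law of $(\xi,\mathbf{u},B_\cdot-B_s)$ together with the conditioning information. Under Assumption \ref{ass:4} the McKean--Vlasov equation (\ref{ques:1}) has a pathwise unique strong solution, so a Yamada--Watanabe type argument applies. If $(\xi,\mathbf{u},B)$ and $(\xi',\mathbf{u}',B')$ have the same joint law, with $\mathbf{u}$ and $\mathbf{u}'$ progressive with respect to the filtrations generated by (initial data, extra randomness, Brownian increments), then the state processes have the same law. In particular the flows $\mathbb{P}_{X_r}$ coincide, and so do the rewards. The whole task therefore reduces to transferring the control.

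\textbf{Step 1: make the control a measurable functional.} Since $\mathcal{F}_t=\mathcal{G}\vee\mathcal{F}_t^B$, the control $\mathbf{u}$ has a representation $\mathbf{u}_t=\Phi_t(\Gamma,(B_r)_{r\le t})$, where $\Gamma$ is a $\mathcal{G}$-measurable random element. Because $\mathcal{G}$ is rich enough and the underlying spaces are Polish, we can take $\Gamma$ uniform on $[0,1]$, or at least generating the relevant countable information. We will pass to a dense class of simpler controls, for instance piecewise-constant controls taking finitely many values and depending on $\Gamma$ and on finitely many Brownian increments. By continuity of $J$ in the control, which holds because the coefficients are uniformly continuous and bounded and the usual stability estimates hold in $L^2$ for the state and the law in $W_2$, this approximation costs at most $\varepsilon$.

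\textbf{Step 2: couple with $\xi'$.} The main obstacle is that $\xi$ is only $\mathcal{F}_s$-measurable, not $\mathcal{G}$-measurable, and the extra randomness $\Gamma$ may be correlated with $\xi$. We need a $\mathcal{G}\vee\mathcal{F}^B_s$-measurable $\Gamma'$ with $(\xi',\Gamma',(B_r)_{r\le s})\overset{d}{=}(\xi,\Gamma,(B_r)_{r\le s})$. We would first try the following. Let $\mathcal{H}$ be a uniform random variable on $\mathcal{G}$ chosen (by Remark \ref{rem:independ}) independent of everything needed. Write the conditional law of $(\Gamma,B_{\cdot\wedge s})$ given $\xi$ as a kernel $K(\xi,\cdot)$, and realize a sample from $K(\xi',\cdot)$ through a measurable map $\Gamma'=F(\xi',\mathcal{H})$ (randomization lemma, Kallenberg). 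This runs into a difficulty: $B_{\cdot\wedge s}$ is already fixed and cannot be resampled. To handle it, we condition on $B_{\cdot\wedge s}$ as well, using the joint law of $(\xi',B_{\cdot\wedge s})$ in place of $\xi'$ alone. A remaining subtlety is that $\mathcal{H}$ must be independent of $\xi'$, and the rich-enough condition alone does not guarantee this. If needed, we would first reduce to the case where $\mathcal{G}$ contains an independent uniform variable, for example by a preliminary approximation of $\xi'$ by a random variable generated by a finer, nonatomic part of $\mathcal{G}$. This is exactly the delicate point behind \cite{cosso2023optimal}, Theorem 3.6.

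\textbf{Step 3: conclude.} Define $\mathbf{u}'_t=\Phi_t(\Gamma',B_{\cdot\wedge t})$ for $t\ge s$. The increments $B_\cdot-B_s$ are independent of $\mathcal{F}_s$, so $(\xi',\mathbf{u}',B)$ and $(\xi,\mathbf{u},B)$ have the same law. Uniqueness in law for (\ref{ques:1}) then gives $J(s,\xi',\mathbf{u}')=J(s,\xi,\mathbf{u})$, whence $V(s,\xi')\le V(s,\xi)+\varepsilon$. Letting $\varepsilon\to0$ and exchanging the roles of $\xi$ and $\xi'$ proves the theorem.
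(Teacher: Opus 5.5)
\textbf{Verdict: genuine gap in Step 2.} The paper does not prove this statement itself; it imports it from \cite{cosso2023optimal}, Theorem 3.6. Your sketch defers to the same reference at the decisive point, so as a self-contained argument it is incomplete, and neither of the two repairs you suggest works as stated.

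\textbf{First repair.} Asking for $(\xi',\Gamma',B_{\cdot\wedge s})\overset{d}{=}(\xi,\Gamma,B_{\cdot\wedge s})$ is in general impossible. It forces $(\xi',B_{\cdot\wedge s})\overset{d}{=}(\xi,B_{\cdot\wedge s})$, which fails e.g.\ for $d=m=1$, $s>0$, $\xi=B_s$ and $\xi'\in L^2(\mathcal{G};\mathbb{R})$ with law $N(0,s)$. It is also unnecessary. Absorb $B_{\cdot\wedge s}$ into the $\mathcal{F}_s$-measurable element $\Gamma$ and write $\mathbf{u}_t=\Psi(t,\Gamma,(B_r-B_s)_{s\le r\le t})$ on $[s,T]$. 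Since the increments after $s$ are independent of $\mathcal{F}_s$, only $(\xi',\Gamma')\overset{d}{=}(\xi,\Gamma)$ is needed.

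\textbf{The real obstruction.} Even this cannot be achieved exactly, because an $\mathcal{F}_s$-measurable uniform variable independent of $\xi'$ need not exist. Take $d=1$, $s=0$, $\mathcal{G}=\sigma(U_{\mathcal{G}})$ with $U_{\mathcal{G}}$ uniform, $\xi'=U_{\mathcal{G}}$ and $\xi=2U_{\mathcal{G}}-\lfloor 2U_{\mathcal{G}}\rfloor$. Every $\mathcal{F}_0$-measurable $\Gamma'$ is a function of $\xi'$. The digit $\lfloor 2U_{\mathcal{G}}\rfloor$ is independent of $\xi$, and a control started at $\xi$ can use it. No $\Gamma'$ reproduces the joint law of $\xi$ and this digit. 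Hence the transfer can only be approximate. Your second repair, approximating $\xi'$ by something ``generated by a nonatomic part of $\mathcal{G}$'', points the wrong way: what leaves room for independent randomness is an approximant with countably many atoms.

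\textbf{How to close the gap.} Quantize \emph{both} initial data and use continuity in the initial datum. Your Step 1 invokes continuity in the control, which is not what is needed.
\begin{enumerate}[(i)]
\item Under Assumption \ref{ass:3}, $b,\sigma,f,h$ are Lipschitz in $(x,\mu)$ for $|\cdot|$ and $W_2$, uniformly in $(t,u)$. Gronwall then gives $|J(s,\eta,\mathbf{u})-J(s,\eta',\mathbf{u})|\le C\|\eta-\eta'\|_2$, with $C$ independent of $\mathbf{u}$.
\item Let $\pi_n$ be Borel with countably many values $x_k$ and $|\pi_n(x)-x|\le 1/n$. Set $\xi_n=\pi_n(\xi)$ and $\xi'_n=\pi_n(\xi')$; they have the same law.
\item Fix a $\mathcal{G}$-measurable uniform $U_{\mathcal{G}}$ (Remark \ref{rem:independ}). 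For each $k$ with $\mathbb{P}(\xi'_n=x_k)>0$, the conditional distribution function $F_k$ of $U_{\mathcal{G}}$ given $\{\xi'_n=x_k\}$ is continuous. So $U':=F_k(U_{\mathcal{G}})$ on $\{\xi'_n=x_k\}$ is $\mathcal{F}_s$-measurable, uniform, and independent of $\xi'_n$.
\item The transfer lemma gives $\Gamma'=\Lambda(\xi'_n,U')$ with $(\xi'_n,\Gamma')\overset{d}{=}(\xi_n,\Gamma)$. For $\mathbf{u}'_t=\Psi(t,\Gamma',(B_r-B_s)_{s\le r\le t})$, your uniqueness-in-law argument yields $J(s,\xi'_n,\mathbf{u}')=J(s,\xi_n,\mathbf{u})$.
\end{enumerate}
Therefore
\[
V(s,\xi')\le J(s,\xi',\mathbf{u}')\le J(s,\xi'_n,\mathbf{u}')+\tfrac{C}{n}=J(s,\xi_n,\mathbf{u})+\tfrac{C}{n}\le J(s,\xi,\mathbf{u})+\tfrac{2C}{n}.
\]
Letting $n\to\infty$, taking the infimum over $\mathbf{u}$ and exchanging $\xi,\xi'$ proves the theorem.

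With this in place, the approximation by simple controls in your Step 1 is not needed. A measurable representation of $\mathbf{u}$ through a countably generated part of $\mathcal{F}_s$ and the post-$s$ Brownian increments suffices.
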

\noindent As a consequence of  Theorem \ref{law:1}, we can define the
value function $v:\left[0,T\right]\times\mathcal{P}_{2}\left(\mathbb{R}^{d}\right)$$\to\mathbb{R}$ as
$
	v\left(s,\mu\right)=V\left(s,\xi\right)$, for any $\left(s,\mu\right)\in\left[0,T\right]\times\mathcal{P}_{2}(\mathbb{R}^d),
$ with any $\xi\in L^{2} \left ( \mathcal{F}_s;\mathbb{R}^{d} \right )$ satisfying $\mathbb{P}_{\xi}=\mu$.	Then we have
\begin{theorem}[Corollary 3.9 of \cite{cosso2023optimal}]\label{DDP:1}
	Suppose that Assumption \ref{ass:4} holds, then v satisfies the dynamic programming principle: for all $s,t \in\left[0,T\right]$, with $s\le t$, it holds that
	\begin{equation}
		v\left(s,\mu\right)=\inf\limits_{\mathbf{u} \in \mathcal{U}}\left\{\mathbb{E}\left[\int_{s}^{t}f\big(r,X_{r}^{s,\xi,\mathbf{u}},\mathbb{P}_{X_{r}^{s,\xi,\mathbf{u}}},\mathbf{u}_{r}\big)dr\right]+v\big(t,\mathbb{P}_{X_{t}^{s,\xi,\mathbf{u}}}\big)\right\},\label{th3.3:1}
	\end{equation} \label{th3.3}
	for any $\xi\in L^{2} \left ( \mathcal{F} _{s};\mathbb{R}^{d} \right )$, with $\mathbb{P}_\xi =\mu$.
\end{theorem}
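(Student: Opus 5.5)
The dynamic programming principle \eqref{th3.3:1} is proved as two opposite inequalities. Throughout, fix $\xi\in L^2(\mathcal{F}_s;\mathbb{R}^d)$ with $\mathbb{P}_\xi=\mu$. For a control $\mathbf{u}$, write $X_r=X^{s,\xi,\mathbf{u}}_r$. Two tools are used repeatedly.

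First, the flow (uniqueness) property of \eqref{ques:1}: under Assumption \ref{ass:4} the coefficients are Lipschitz in $(x,\mu)$ with respect to $W_2$, so the McKean--Vlasov equation is pathwise unique. Hence $X^{s,\xi,\mathbf{u}}_r=X^{t,X_t,\mathbf{u}}_r$ for $r\ge t$, and
\[
J(s,\xi,\mathbf{u})=\mathbb{E}\Big[\int_s^t f(r,X_r,\mathbb{P}_{X_r},\mathbf{u}_r)\,dr\Big]+J(t,X_t,\mathbf{u}).
\]

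Second, the law invariance of Theorem \ref{law:1}. It gives $V(t,X_t)=v(t,\mathbb{P}_{X_t})$. Here $X_t\in L^2(\mathcal{F}_t)$, and $\mathcal{F}_t=\mathcal{G}\vee\mathcal{F}^B_t$ is again rich enough, because $\mathcal{G}$ is independent of the Brownian increments after $t$.

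\textbf{The inequality $\ge$.} Since $J(t,X_t,\mathbf{u})\ge V(t,X_t)=v(t,\mathbb{P}_{X_t})$, the flow identity gives $J(s,\xi,\mathbf{u})\ge$ the bracket in \eqref{th3.3:1}. Taking the infimum over $\mathbf{u}$ yields $v(s,\mu)\ge$ the right-hand side.

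\textbf{The inequality $\le$.} This is the hard direction. Fix $\mathbf{u}$ and $\varepsilon>0$. We need a control $\mathbf{u}'$ on $[t,T]$ with $J(t,X_t,\mathbf{u}')\le v(t,\mathbb{P}_{X_t})+\varepsilon$. The concatenation $\mathbf{u}\mathbf{1}_{[s,t)}+\mathbf{u}'\mathbf{1}_{[t,T]}$ must remain $\mathbb{F}$-progressive, and its cost is then at most the bracket plus $\varepsilon$.

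In the classical setting this step needs a measurable selection over $\omega$. Here the value depends on the law, so one near-optimal control for the single random initial condition $X_t$ suffices. By definition of $V(t,X_t)$ there exists $\mathbf{u}'\in\mathcal{U}$, progressive with respect to $\mathcal{F}_r=\mathcal{G}\vee\mathcal{F}^B_r$, with $J(t,X_t,\mathbf{u}')\le V(t,X_t)+\varepsilon$. Its restriction to $[t,T]$ is still admissible.

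The subtle point is that $\mathbf{u}'$ was chosen for the initial datum $X_t$ itself, so the concatenation uses the same $\omega$. This makes the pasting consistent and requires no conditioning. It needs $V(t,\cdot)$ to be defined for $\mathcal{F}_t$-measurable data and the control class to be $\mathcal{F}_r$-progressive, which is exactly the setting of \eqref{lifted v}. Thus $v(s,\mu)\le J(s,\xi,\text{concatenation})\le$ the bracket $+\varepsilon$. Letting $\varepsilon\to0$ and taking the infimum over $\mathbf{u}$ closes the argument.

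\textbf{Main obstacle.} The real difficulty lies in Theorem \ref{law:1}, which is assumed here. It guarantees that $V(t,X_t)$ depends only on $\mathbb{P}_{X_t}$, even though $X_t$ carries information from $\mathcal{F}^B_t$ and not only from $\mathcal{G}$. Without it, one would have to show that $\mathcal{F}_t$-measurable data can be replaced by $\mathcal{G}$-measurable data with the same law, through a measure-preserving transfer using the richness of $\mathcal{G}$ (as in Remark \ref{rem:independ}) together with a continuity argument for $J$ in $W_2$. If needed, I would invoke this exactly as in \cite{cosso2023optimal}.
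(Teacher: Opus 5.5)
Your proof is correct and follows the route behind the cited Corollary 3.9 of \cite{cosso2023optimal}; the paper gives no argument of its own beyond that citation. That route first proves a DPP for the lifted function $V$ from the flow property and the stability of $\mathcal{U}$ under concatenation, with no measurable selection needed because $V(t,\cdot)$ is defined on $\mathcal{F}_t$-measurable data and controls may use $\mathcal{F}_t=\mathcal{G}\vee\mathcal{F}^B_t$, and then uses law invariance (Theorem \ref{law:1}) to replace $V(t,X_t)$ by $v(t,\mathbb{P}_{X_t})$, exactly as you do. One minor correction: $\mathcal{F}_t$ is rich enough simply because $\mathcal{G}\subseteq\mathcal{F}_t$, not because of independence from future Brownian increments, and your argument does not actually need richness, only that $X_t\in L^2(\mathcal{F}_t;\mathbb{R}^d)$.
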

Now we give the HJB equation which is deduced from the DPP. 
For every t $\in$ $\left[0,T\right]$, we introduce the set
$
\mathcal{M}_{t}:=\left\{\mathcal{u} :\Omega\to U:\mathcal{u}\text{ is }\mathcal{F}_{t}\text{-measurable} \right\}.
$
Note that, we have $\mathcal{M}_t=\bigcap_{\varepsilon>0} \mathcal{M}_{t+\varepsilon}$ since the filtration $\mathcal{F}_t$ is right-continuous. Then, we consider the second-order partial differential equation on $\left[0,T\right]$$\times$$\mathcal{P}_{2}$$\left(\mathbb{R}^{d}\right)$ (see equation (5.1) in \cite{cosso2023optimal}):
\begin{equation}
	\left\{
	\begin{aligned}
		&	-\partial_{t}w\left(t,\mu\right) +\sup\limits_{\mathcal{u} \in \mathcal{M}_{t}}\displaystyle\mathbb{E}\bigg[
		-f\left(t,\xi,\mu,\mathcal{u} \right)- b\left(t,\xi,\mu,\mathcal{u} \right)\cdot\partial_{\mu}w\left(t,\mu;\xi \right) \\
		&\left.\qquad\qquad\quad\hspace{0.15em}-\dfrac{1}{2}tr\big(\sigma\sigma^{\textsuperscript {T}}\left(t,\xi ,\mu,\mathcal{u} \right)\partial_{x}\partial_{\mu}w\left(t,\mu;\xi\right)\big)\right]=0, \\
		&w\left(T,\mu\right)=\int_{\mathbb{R}^{d}}h\left(x,\mu\right)\mu\left(dx\right),
	\end{aligned}\label{hjb:1}
	\right.
\end{equation}
with any $\xi\in L^2(\mathcal{G},\mathbb{R}^d)$, such that $\mathbb{P}_{\xi}=\mu.$ \par
We define the generalized Hamiltonian function $G:[0,T]\times\mathbb{R}^d \times\mathcal{P}_{2}(\mathbb{R}^d)\times U\times\mathbb{R}^{d}\times\mathbb{R}^{d\times d}$ as
\begin{equation}\label{Gdefine}
 	G\left ( t,x,\mu ,u,p,P \right ) := b\left ( t,x,\mu ,u \right )\cdot  p+  \frac{1}{2}tr\left(P\sigma \left ( t,x,\mu,u\right )\sigma \left ( t,x,\mu,u\right )^{\top}\right)-f\left (t, x,\mu,u \right ),
 \end{equation} then equation (\ref{hjb:1}) can be rewritten as 
\begin{equation*}
	\left\{
	\begin{aligned}
		&	-\partial_{t}w\left(t,\mu\right) +\sup\limits_{\mathcal{u} \in \mathcal{M}_{t}}
		\mathbb{E}\left[G\left ( t,\xi,\mu ,\mathcal{u},-\partial_{\mu}w\left(t,\mu;\xi\right),-\partial_{x}\partial_{\mu}w\left(t,\mu;\xi\right)\right ) \right]
		=0 ,\\
		&w\left(T,\mu\right)=\mathbb{E}\left[h(\xi,\mu)\right].
	\end{aligned}
	\right.
\end{equation*}
\begin{definition}
	A function $w: [0, T]\times \mathcal{P}_2(\mathbb{R}^d) \mapsto \mathbb{R}$ is a classical solution to equation
	(\ref{hjb:1}) if $w\in C_{b,loc}^{1,2}\left([0,T]\times\mathcal{P}_2\left(\mathbb{R}^d\right)\right)$ and satisfies (\ref{hjb:1}).
\end{definition}
Following Theorem 5.3 of \cite{cosso2023optimal}, we have
\begin{theorem}\label{classical}
	Let Assumption \ref{ass:4} hold and assume that the value function $v$  belongs to the space $C_{b,loc}^{1,2}\left([0,T]\times\mathcal{P}_2\left(\mathbb{R}^d\right)\right)$. Then $v$ is a classical solution of equation (\ref{hjb:1}).
	\begin{proof}
	 Theorem 5.3 in \cite{cosso2023optimal} requires that 
		$v\in C_{b}^{1,2}\left([0,T]\times\mathcal{P}_2\left(\mathbb{R}^d\right)\right)$ to ensure the applicability of Itô's formula. Recalling the It\^o's formula mentioned in Section 2.1, we note that it remains applicable when $v\in C_{b,loc}^{1,2}\left([0,T]\times\mathcal{P}_2\left(\mathbb{R}^d\right)\right)$. Thus, proceeding along the same lines as in the proof of Theorem 5.3 in \cite{cosso2023optimal}, we can obtain this result.
	\end{proof}
\end{theorem}

\subsection{Mean-Field Stochastic Verification Theorem}
The following result which is called  verification theorem gives a way of testing whether a given admissible pair is optimal or not. Notably, there is currently little literature for the mean-field stochastic verification theorem. To our knowledge, \cite{pham2017dynamic} and \cite{pham2018bellman} give the verification theorem of mean-field optimal control problem in the case with feedback control. Here we give a generalized result.\par
\begin{theorem}\label{ver}
	Let Assumption \ref{ass:4} hold and $w$ $\in$ $C_{b,loc}^{1,2}$$\left(\left[0,T\right]\times\mathcal{P}_{2}(\mathbb{R}^d)\right)$ be a solution of HJB equation (\ref{hjb:1}). Then 
	\begin{enumerate}[(i)]
		\item $	w(s,\mu)\le J(s,\xi,\mathbf{u}),$ for all $ \mathbf{u}\in  \mathcal{U},$ $(s,\mu) \in [0,T]\times\mathcal{P}(\mathbb{R}^d),$ with $\xi\in L^2(\mathcal{F}_s;\mathbb{R}^d)$, s.t. $\mathbb{P}_{\xi}=\mu\label{th4.1:1}$.

		\item \label{ver:ii}An admissible pair $(X^{\ast},\mathbf{u}^{\ast})$ is optimal for \textbf{Problem} $\bm{(}\bm{M_{s\xi}}\bm{)}$, if for $a.e.$ $  t\in[s,T]$, it holds that
		\begin{equation}
			\begin{aligned}
				\partial_{t}w\left(t,\mathbb{P}_{X_{t}^{\ast}}\right) &\equiv 	\sup\limits_{\mathcal{u} \in \mathcal{M}_{t}}\mathbb{E}\left[G\left ( t,X_t^{\ast},\mathbb{P}_{X_{t}^{\ast}} ,\mathcal{u},-\partial_{\mu}w\left(t,\mathbb{P}_{X_{t}^{\ast}};X_t^{\ast}\right),-\partial_{x}\partial_{\mu}w\left(t,\mathbb{P}_{X_{t}^{\ast}};X_t^{\ast}\right)\right ) \right]	\\
				&=\mathbb{E}\left[G\left ( t,X_t^{\ast},\mathbb{P}_{X_{t}^{\ast}} ,\mathbf{u}_{t}^{\ast},-\partial_{\mu}w\left(t,\mathbb{P}_{X_{t}^{\ast}};X_t^{\ast}\right),-\partial_{x}\partial_{\mu}w\left(t,\mathbb{P}_{X_{t}^{\ast}};X_t^{\ast}\right)\right ) \right. .
			\end{aligned}\label{ver：2}
		\end{equation}
		\item \label{ver:iii}Let  $v$ $\in$ $C_{b,loc}^{1,2}$$\left(\left[0,T\right]\times\mathcal{P}_{2}(\mathbb{R}^d)\right)$ be the value function of \textbf{Problem} $\bm{(}\bm{M_{s\xi}}\bm{)}$. Then an admissible pair $(X^{\ast},\mathbf{u}^{\ast})$ is optimal for \textbf{Problem} $\bm{(}\bm{M_{s\xi}}\bm{)}$ if and only if for $a.e.$ $t\in[s,T]$,
		\begin{equation}\label{veriii:equ}
			\begin{aligned}
				\partial_{t}v\left(t,\mathbb{P}_{X_{t}^{\ast}}\right) &\equiv 	\sup\limits_{\mathcal{u} \in \mathcal{M}_{t}}\mathbb{E}\left[G\left ( t,X_t,\mathbb{P}_{X_{t}} ,\mathcal{u},-\partial_{\mu}v\left(t,\mathbb{P}_{X_{t}};X_t\right),-\partial_{x}\partial_{\mu}v\left(t,\mathbb{P}_{X_{t}};X_t\right)\right ) \right]	\\
				&=\mathbb{E}\left[G\left ( t,X_t^{\ast},\mathbb{P}_{X_{t}^{\ast}} ,\mathbf{u}^{\ast}_t,-\partial_{\mu}v\left(t,\mathbb{P}_{X_{t}^{\ast}};X_t^{\ast}\right),-\partial_{x}\partial_{\mu}v\left(t,\mathbb{P}_{X_{t}^{\ast}};X_t^{\ast}\right)\right ) \right].
			\end{aligned}
		\end{equation}
	\end{enumerate}
\end{theorem}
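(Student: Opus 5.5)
The proof rests on It\^o's formula along the flow of probability measures (Section 2.1) applied to $t\mapsto w(t,\mathbb{P}_{X_t})$, where $X=X^{s,\xi,\mathbf{u}}$ is the state driven by an arbitrary $\mathbf{u}\in\mathcal{U}$. Since the coefficients are bounded (Assumption \ref{ass:3}), the integrability condition $\mathbb{E}\int_s^T(|b_r|^2+|\sigma_r|^4)dr<\infty$ holds. The set $\{\mathbb{P}_{X_t}:t\in[s,T]\}$ is compact in $\mathcal{P}_2(\mathbb{R}^d)$, because $t\mapsto\mathbb{P}_{X_t}$ is $W_2$-continuous. Together with $w\in C^{1,2}_{b,loc}$ this gives the required local boundedness of $\partial_\mu w$ and $\partial_x\partial_\mu w$. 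Including the time variable, we obtain
\begin{equation*}
w(T,\mathbb{P}_{X_T})-w(s,\mu)=\int_s^T\Big\{\partial_t w(r,\mathbb{P}_{X_r})+\mathbb{E}\Big[b(r)\cdot\partial_\mu w(r,\mathbb{P}_{X_r};X_r)+\tfrac12 tr\big(\sigma\sigma^{\top}(r)\partial_x\partial_\mu w(r,\mathbb{P}_{X_r};X_r)\big)\Big]\Big\}dr ,
\end{equation*}
where $b(r)=b(r,X_r,\mathbb{P}_{X_r},\mathbf{u}_r)$ and similarly for $\sigma$. Adding $\mathbb{E}\int_s^T f(r)\,dr$ to both sides and using the terminal condition $w(T,\mathbb{P}_{X_T})=\mathbb{E}[h(X_T,\mathbb{P}_{X_T})]$, the identity becomes
\begin{equation*}
J(s,\xi,\mathbf{u})-w(s,\mu)=\int_s^T\Big\{\partial_t w(r,\mathbb{P}_{X_r})-\mathbb{E}\big[G(r,X_r,\mathbb{P}_{X_r},\mathbf{u}_r,-\partial_\mu w,-\partial_x\partial_\mu w)\big]\Big\}dr .
\end{equation*}

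To prove (i), fix $r$. The HJB equation (\ref{hjb:1}) holds with any representative $\xi'\in L^2(\mathcal{G})$ of the law $\mathbb{P}_{X_r}$, but $X_r$ itself is $\mathcal{F}_r$-measurable rather than $\mathcal{G}$-measurable, and $\mathbf{u}_r\in\mathcal{M}_r$. This mismatch of representatives is the main obstacle. We would handle it as in \cite{cosso2023optimal}: the supremum in (\ref{hjb:1}) depends only on the law, so it suffices to show that
\begin{equation*}
\mathbb{E}\big[G(r,X_r,\mathbb{P}_{X_r},\mathbf{u}_r,\cdot)\big]\le\sup_{\mathcal{u}\in\mathcal{M}_r}\mathbb{E}\big[G(r,\xi',\mathbb{P}_{X_r},\mathcal{u},\cdot)\big].
\end{equation*}
Since $G$ is a deterministic function of $(X_r,\mathbf{u}_r)$, the left side depends only on the joint law of $(X_r,\mathbf{u}_r)$. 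Because $\mathcal{G}$ is rich enough (Remark \ref{rem:independ}), we can realize a pair $(\xi',\mathcal{u}')$ on $\mathcal{G}\subset\mathcal{F}_r$ with the same joint law: take $\xi'$ with law $\mathbb{P}_{X_r}$ and set $\mathcal{u}'=\phi(\xi',U')$, where $\phi$ is a measurable kernel representation from the conditional law of $\mathbf{u}_r$ given $X_r$ and $U'$ is an independent uniform variable, using that $U$ is Polish. Then $\mathcal{u}'\in\mathcal{M}_r$, and the left side equals a term inside the supremum. Hence the integrand is $\ge \partial_t w-\sup_{\mathcal{u}}\mathbb{E}[G]=0$, which gives $w(s,\mu)\le J(s,\xi,\mathbf{u})$.

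For (ii), apply the identity with $\mathbf{u}=\mathbf{u}^\ast$. The hypothesis (\ref{ver：2}) makes the integrand vanish for a.e. $r$, so $J(s,\xi,\mathbf{u}^\ast)=w(s,\mu)\le J(s,\xi,\mathbf{u})$ for every $\mathbf{u}\in\mathcal{U}$ by (i). Hence $\mathbf{u}^\ast$ is optimal.

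For (iii), Theorem \ref{classical} shows that $v$ solves (\ref{hjb:1}), so sufficiency follows from (ii) with $w=v$. For necessity, optimality gives $J(s,\xi,\mathbf{u}^\ast)=v(s,\mu)$. The identity then expresses $0$ as the integral of a function that, by the argument in (i), is nonnegative for a.e. $r$: namely $\sup_{\mathcal{u}}\mathbb{E}[G]-\mathbb{E}[G(\cdot,\mathbf{u}^\ast_r,\cdot)]$, once $\partial_t v$ is replaced through the HJB equation. A nonnegative function with zero integral vanishes a.e., so the equality (\ref{veriii:equ}) holds for a.e. $t$. The first equality in (\ref{veriii:equ}) is simply (\ref{hjb:1}) evaluated at $\mu=\mathbb{P}_{X^\ast_t}$.
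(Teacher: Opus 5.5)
Your proposal is correct and follows essentially the same route as the paper: It\^o's formula along the flow of measures expresses $J(s,\xi,\mathbf{u})-w(s,\mu)$ as $\int_s^T\{\partial_t w-\mathbb{E}[G]\}\,dr$, the HJB inequality then gives (i), $w\le v$ combined with the vanishing integrand gives (ii), and a nonnegative integrand with zero integral gives the necessity part of (iii). Your transfer construction, which realizes the joint law of $(X_r,\mathbf{u}_r)$ on $\mathcal{G}$, fills in a point the paper passes over, since the paper applies (\ref{hjb:1}) directly with the $\mathcal{F}_r$-measurable representative $X_r$; by the same reasoning, the first equality in (\ref{veriii:equ}) is not literally (\ref{hjb:1}) but follows from that construction (giving $\le$) together with the a.e.\ identity at $\mathbf{u}^\ast_t$ (giving $\ge$).
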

	\begin{proof}
		(i) For any $\mathbf{u}\in \mathcal{U}$, let $X$ be the corresponding state trajectory. Applying It\^o's formula to $	w\left(t,\mathbb{P}_{X_{t}}\right)$, by noting the terminal condition of (\ref{hjb:1}) we get
		\begin{equation}
			\begin{aligned}
				w(s,\mu)=&J(s,\xi,\mathbf{u})+\int_{s}^{T}\left\{-\partial_{t}w(t,\mathbb{P}_{X_t})\right.\\
				&+\left.\mathbb{E}\left[G\left ( t,X_t,\mathbb{P}_{X_{t}} ,\mathbf{u}_{t},-\partial_{\mu}w\left(t,\mathbb{P}_{X_{t}};X_t\right),-\partial_{x}\partial_{\mu}w\left(t,\mathbb{P}_{X_{t}};X_t\right)\right ) \right]\right\}dt\\		
				\le&J(s,\xi,\mathbf{u})+\int_{s}^{T}\Big\{-\partial_{t}w(t,\mathbb{P}_{X_t})\\
				&+\left.	\sup\limits_{\mathcal{u} \in \mathcal{M}_{t}}\mathbb{E}\left[G\left ( t,X_t,\mathbb{P}_{X_{t}} ,\mathcal{u},-\partial_{\mu}w\left(t,\mathbb{P}_{X_{t}};X_t\right),-\partial_{x}\partial_{\mu}w\left(t,\mathbb{P}_{X_{t}};X_t\right)\right ) \right]\right\}dt\\
				=&J(s,\xi,\mathbf{u}).
			\end{aligned}\label{th4.1proof:1}
		\end{equation}
		 The last equality is due to that $w$ is a solution of HJB equation (\ref{hjb:1}). \par
		(ii) On the one hand, similar to (\ref{th4.1proof:1}), by using (\ref{ver：2}) we obtain
		\begin{equation}
			\begin{aligned}
				w(s,\mu)=&J(s,\xi,\mathbf{u}^{\ast})+\int_{s}^{T}\left\{-\partial_{t}w(t,\mathbb{P}_{X_t^{\ast}})\right.\\
				&+\left.\mathbb{E}\left[G\left ( t,X_t^{\ast},\mathbb{P}_{X_{t}^{\ast}} ,\mathbf{u}^{\ast}_t,-\partial_{\mu}w\left(t,\mathbb{P}_{X_{t}^{\ast}};X_t^{\ast}\right),-\partial_{x}\partial_{\mu}w\left(t,\mathbb{P}_{X_{t}^{\ast}};X_t^{\ast}\right)\right ) \right]\right\}dt,\\
				=&J(s,\xi,\mathbf{u}^{\ast}(\cdot)),
			\end{aligned}\label{ver:3}
		\end{equation}
		which shows $w(s,\mu)\ge v(s,\mu)$ by recalling the definition of $v$ (see the definition below Theorem \ref{law:1}). On the other hand, from (\ref{th4.1:1}) we can obtain $w(s,\mu)\le v(s,\mu)$. Thus $v(s,\mu)=w(s,\mu)=J(s,\xi,\mathbf{u}^{\ast}(\cdot))$ which means that $(X^{\ast},\mathbf{u}^{\ast})$ is optimal for \textbf{Problem ($\mathbf{M_{s\xi}}$)}. \par
		(iii) By noting Theorem \ref{classical} and (ii), we know the ``if'' part holds, thus it is only to show the ``only if'' part. In fact, if an admissible pair $(X^{\ast},\mathbf{u}^{\ast})$ is optimal, we have 
				$v(s,\mu)=J(s,\xi,\mathbf{u}^{\ast})$.  Then by applying It\^o's formula to $v(t,\mathbb{P}_{X_t^{\ast}})$ and noting that $v(T,\mathbb{P}_{X_T^{\ast}})=\mathbb{E}[h(X_T^{\ast},\mathbb{P}_{X_T^{\ast}})]$ from the definition of $v$, we obtain
				\begin{equation*}
					\begin{aligned}
						v(s,\mu)=&J(s,\xi,\mathbf{u}^{\ast})+\int_{s}^{T}\left\{-\partial_{t}v(t,\mathbb{P}_{X_t^{\ast}})\right.\\
						&+\left.\mathbb{E}\left[G\left ( t,X_t^{\ast},\mathbb{P}_{X_{t}^{\ast}} ,\mathbf{u}^{\ast}_t,-\partial_{\mu}v\left(t,\mathbb{P}_{X_{t}^{\ast}};X_t^{\ast}\right),-\partial_{x}\partial_{\mu}v\left(t,\mathbb{P}_{X_{t}^{\ast}};X_t^{\ast}\right)\right ) \right]\right\}dt.
					\end{aligned}
				\end{equation*}
		 Thus, we can get \begin{equation*}\label{yanzheng-iii}
			\int_{s}^{T}\left\{-\partial_{t}v(t,\mathbb{P}_{X_t^{\ast}})
			+\mathbb{E}\left[G\left ( t,X_t^{\ast},\mathbb{P}_{X_{t}^{\ast}} ,\mathbf{u}^{\ast}_t,-\partial_{\mu}v\left(t,\mathbb{P}_{X_{t}^{\ast}};X_t^{\ast}\right),-\partial_{x}\partial_{\mu}v\left(t,\mathbb{P}_{X_{t}^{\ast}};X_t^{\ast}\right)\right ) \right]\right\}dt=0.
		\end{equation*}
		Recalling Theorem \ref{classical} that $v$ solves (\ref{hjb:1}), we have, for any $t\in[s,T]$,
		\begin{equation*}
			\begin{aligned}
				&-\partial_{t}v(s,\mathbb{P}_{X_t^{\ast}})+ \mathbb{E}\left[G\left ( t,X_t^{\ast},\mathbb{P}_{X_{t}^{\ast}} ,\mathbf{u}^{\ast}_t,-\partial_{\mu}v\left(t,\mathbb{P}_{X_{t}^{\ast}};X_t^{\ast}\right),-\partial_{x}\partial_{\mu}v\left(t,\mathbb{P}_{X_{t}^{\ast}};X_t^{\ast}\right)\right ) \right]\\
				\le&-\partial_{t}v(s,\mathbb{P}_{X_t^{\ast}})+ \sup\limits_{\mathcal{u} \in \mathcal{M}_{t}}\mathbb{E}\left[G\left ( t,X_t^{\ast},\mathbb{P}_{X_{t}^{\ast}} ,\mathcal{u},-\partial_{\mu}v\left(t,\mathbb{P}_{X_{t}^{\ast}};X_t^{\ast}\right),-\partial_{x}\partial_{\mu}v\left(t,\mathbb{P}_{X_{t}^{\ast}};X_t^{\ast}\right)\right ) \right]=0.
				\end{aligned}
		\end{equation*}
		Hence, the above inequality holds with equality for a.e. $t\in[s,T]$, and (\ref{veriii:equ}) follows.
	\end{proof} 
\begin{remark}

In the mean-field control and mean-field games theory, the well-posedness of classical solution to HJB equations and master equations remains a challenging problem. The key results can be found in \cite{Bansil-Meszaros-Mou-Global-well-posedness-of-displacement-monotone-degenerate,Bensoussan-Graber-Yam-Control-on-Hilbert-spaces-and-application,Gangbo-Meszaros-Mou-Zhang-Mean-field-games-master-equations-with-nonseparable-Hamiltonians,Mou-Zhang-Wellposedness-of-second-order-master-equations-with-nonsmooth-data}. It should be noted that the classical solution of HJB equation requires sufficient smoothness of the value function, so it is more practical to introduce the viscosity solution. For the definition and well-posedness of the viscosity solution to HJB equations on the space of probability measures, one can refer to \cite{cosso2024master,cosso2019zero,pham2017dynamic,pham2018bellman,wu2020viscosity}.
\end{remark}

\section{The Connection between MP and DPP}
\label{section4}
The goal of this section is to give the relationship between the MP and DPP for the mean-field stochastic optimal control problem. To our knowledge, there is little literature on this topic, and only \cite{carmona2018probabilistic} studied the case in the heuristic sense where the diffusion term does not depend on the control. In our work, we  generalize the results of \cite{carmona2018probabilistic} in the sense that our system allows the diffusion term to contain control. In addition, we present the relationship between the second-order adjoint equation and the value function. Some examples are also provided to demonstrate our results.\par 
 Let $(X^{\ast},\mathbf{u}^{\ast})$ be an optimal pair, we introduce some simplified notations for the value function $v$ and its derivatives as follows\\
\begin{equation}
\left\{
\begin{aligned}
	&v(t)=v(t,\mathbb{P}_{X_t^{\ast}}),\\
		&v_t(t)=\partial_{t}v(t,\mathbb{P}_{X_t^{\ast}}),\\
	&v_{\mu}(t)=\partial_{\mu}v(t,\mathbb{P}_{X_t^{\ast}};X_t^{\ast}),\\
	&\widetilde{v}_{\mu}(t)=\partial_{\mu}v(t,\mathbb{P}_{X_t^{\ast}};\widetilde{X}_t^{\ast}),\\
	&v_{t\mu}(t)=\partial_{t}\partial_{\mu}v(t,\mathbb{P}_{X_t^{\ast}};X_t^{\ast}),\\
	&v_{x\mu}(t)=\partial_{x}\partial_{\mu}v(t,\mathbb{P}_{X_t^{\ast}};X_t^{\ast}),\\
	&\widetilde{v}_{x\mu}(t)=\partial_{x}\partial_{\mu}v(t,\mathbb{P}_{X_t^{\ast}};\widetilde{X}_t^{\ast}),
\end{aligned}
\right.
\quad
\left\{
\begin{aligned}
	&\widetilde{v}_{\mu\mu}(t)=\partial_{\mu}^{2}v(t,\mathbb{P}_{X_t^{\ast}};X_t^{\ast},\widetilde{X}_t^{\ast}),\\
	&\widetilde{v}_{\mu\mu}^{\ast}(t)=	\partial_{\mu}^{2}v(t,\mathbb{P}_{X_t^{\ast}};\widetilde{X}_t^{\ast},X_t^{\ast}),\\
	&\widetilde{v}_{x\mu\mu}(t)=\partial_{x}\partial_{\mu}^{2}v(t,\mathbb{P}_{X_t^{\ast}};X_t^{\ast},\widetilde{X}_t^{\ast}),\\
	&\widetilde{v}^{\ast}_{x\mu\mu}(t)=\partial_{x}\partial_{\mu}^{2}v(t,\mathbb{P}_{X_t^{\ast}};\widetilde{X}_t^{\ast},X_t^{\ast}),\\
		&\overline{v}_{\mu\mu\mu}(t)=	\partial_{\mu}^{3}v(t,\mathbb{P}_{X_t^{\ast}};X_t^{\ast},\widetilde{X}_t^{\ast},\overline{X}_t^{\ast}),\\
		&\overline{v}_{x\mu\mu\mu}(t)=	\partial_{x}\partial_{\mu}^{3}v(t,\mathbb{P}_{X_t^{\ast}};X_t^{\ast},\widetilde{X}_t^{\ast},\overline{X}_t^{\ast}).
\end{aligned}
\right.
\end{equation} We can define $\widetilde{v}_{y\mu\mu}(t)$, $\widetilde{v}^{\ast}_{y\mu\mu}(t)$, $\widetilde{v}_{\mu x \mu}(t)$, $\widetilde{v}^{\ast}_{\mu x \mu}(t)$  in the similar way as $\widetilde{v}_{x\mu\mu}(t)$ and $\widetilde{v}^{\ast}_{x\mu\mu}(t)$. Then we have the following result about the connection between the first-order adjoint equation and the value function. 
\begin{theorem}\label{Th;4.1}
	Let Assumption \ref{ass:4} hold. For fixed $\left(s,\xi\right)\in\left[0,T\right]\times L^2(\mathcal{F} _{s} ;\mathbb{R}^{d})$,
	let $(X^{\ast},\mathbf{u}^{\ast})$ be an optimal solution of the \textbf{Problem} $\bm{(}\bm{M_{s\xi}}\bm{)}$, and $v$ be the value function. If $v$ $\in$ $C_{b,loc}^{1,2}$$\left(\left[0,T\right]\times\mathcal{P}_{2}(\mathbb{\mathbb{R}}^{d})\right)$, then for a.e. $t\in[s,T]$,
	\begin{equation}\label{conection:2}
		\begin{aligned}
			\partial_tv(t,\mathbb{P}_{X_t^{\ast}})
			&=\mathbb{E}\left[G\left ( t,X_{t}^{\ast},\mathbb{P}_{X_{t}^{\ast}} ,\mathbf{u}_{t}^{\ast},-\partial_{\mu} v(t,\mathbb{P}_{X_{t}^{\ast}};X_{t}^{\ast}),-\partial_{x}\partial_{\mu}v(t,\mathbb{P}_{X_{t}^{\ast}};X_{t}^{\ast}) \right )\right]\\
			&=\sup\limits_{\mathcal{u} \in \mathcal{M}_{t}} \mathbb{E}\left[G\left ( t,X_{t}^{\ast},\mathbb{P}_{X_{t}^{\ast}} ,\mathcal{u},-\partial_{\mu} v(t,\mathbb{P}_{X_{t}^{\ast}};X_{t}^{\ast}),-\partial_{x}\partial_{\mu}v(t,\mathbb{P}_{X_{t}^{\ast}};X_{t}^{\ast}) \right )\right].
		\end{aligned}	 	
	\end{equation}
 Fuethermore, if  $v$ $\in$ $C_{b,loc}^{1,3}\left(\left[0,T\right]\times\mathcal{P}_{2}(\mathbb{\mathbb{R}}^d)\right)$, and $\partial_{t}\partial_{\mu}v$, $\partial_{\mu}\partial_{t}v$ are continuous and bounded, then
	\begin{equation}
		\left\{
		\begin{aligned}
			&\partial_{\mu}v(t,\mathbb{P}_{X_t^{\ast}};X_t^{\ast})=-p_t\qquad\quad\quad\quad\quad \quad\qquad\quad   \text{for all } t\in\left[s,T\right], \mathbb{P}\mbox{-}a.s. ,  \\
			&\partial_{x}\partial_{\mu}v(t,\mathbb{P}_{X_t^{\ast}};X_t^{\ast})\sigma(t,X_t^{\ast},\mathbf{u}_{t}^{\ast},\mathbb{P}_{X_t^{\ast}})=-q_t,\quad\quad a.e.\text{ } t\in\left[s,T\right], \mathbb{P}\mbox{-}a.s.,
		\end{aligned}
		\right. \label{th:1}
	\end{equation}
		where $(p,q)$ solves the first-order adjoint equation (\ref{bansui:1}).
\end{theorem}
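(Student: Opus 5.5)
The first identity \eqref{conection:2} follows from earlier results. By Theorem \ref{classical}, $v$ is a classical solution of \eqref{hjb:1}. Since $(X^{\ast},\mathbf{u}^{\ast})$ is optimal, the ``only if'' part of Theorem \ref{ver}(iii) gives \eqref{veriii:equ} for a.e. $t$, which is exactly \eqref{conection:2}.

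For \eqref{th:1} I would use a perturbation argument in the measure variable. Fix $t$ and consider the function
\[
\Phi(\mu):=-\partial_t v(t,\mu)+\mathbb{E}\Big[G\big(t,\zeta,\mu,\mathbf{u}^{\ast}_t\circ(\cdot),-\partial_\mu v(t,\mu;\zeta),-\partial_x\partial_\mu v(t,\mu;\zeta)\big)\Big],\qquad \mathbb{P}_\zeta=\mu .
\]
To make the control meaningful for perturbed states, I would work with the lift. Set $\zeta=X_t^{\ast}+\varepsilon\eta$, where $\eta\in L^\infty(\mathcal{G})$ is independent of $\mathcal{F}_t$ (possible by Remark \ref{rem:independ}), and keep the control $\mathbf{u}^{\ast}_t$ fixed as an $\mathcal{M}_t$-element, enlarged by the independent $\mathcal{G}$-part.

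The HJB equation gives $\Phi\le 0$ for every law, because the sup over $\mathcal{M}_t$ dominates the particular choice $\mathbf{u}^{\ast}_t$. At $\varepsilon=0$, \eqref{conection:2} gives equality. Hence $\varepsilon\mapsto\Phi(\mathbb{P}_{X_t^{\ast}+\varepsilon\eta})$ attains its maximum at $0$, so its derivative vanishes for every admissible direction $\eta$. This is the first-order maximum condition.

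Next I would compute this derivative using the lift/Fréchet definition of the L-derivative. Differentiating $G$ produces terms from four sources:
\begin{itemize}
\item the $x$-derivatives of $b$, $\sigma$ and $f$ (the intrinsic $\zeta$);
\item the $\mu$-derivatives of the coefficients, which produce $\widetilde{\mathbb{E}}[\cdot]$ terms evaluated at independent copies;
\item the $\mu$-derivatives of $\partial_\mu v$ and $\partial_x\partial_\mu v$, which produce $\widetilde{v}_{\mu\mu}$ and $\widetilde{v}_{x\mu\mu}$;
\item the $y$-derivatives $\partial_x\partial_\mu v$ and $\partial_x^2\partial_\mu v$.
\end{itemize}
By Fubini I would swap the roles of $\omega$ and $\widetilde\omega$ so that every term pairs with $\eta$ itself. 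Because $\eta$ is arbitrary, this yields a pointwise identity for a.e. $t$ and $\mathbb{P}$-a.s. It expresses $\partial_t\partial_\mu v(t)$ through $b_x^\top v_\mu$, $\widetilde{\mathbb{E}}[\widetilde{b}^{\ast\top}_\mu\widetilde v_\mu]$, $\widetilde{\mathbb{E}}[\widetilde{v}^{\ast}_{\mu\mu}{}^{\top}\widetilde b]$, the corresponding $\sigma$ terms, $f_x$, $\widetilde{\mathbb{E}}[\widetilde f_\mu^\ast]$, and so on. Lemma \ref{lem:2} lets me identify $\partial_\mu\partial_t v$ with $\partial_t\partial_\mu v$.

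Separately, I would apply the Itô formula \eqref{extension ito:1} to $v_\mu(t)=\partial_\mu v(t,\mathbb{P}_{X^\ast_t};X^\ast_t)$, treating $(t,x,\mu)\mapsto\partial_\mu v(t,\mu;x)$ as a function in $C^{1,2,2}$ and using the $C^{1,3}_{b,loc}$ hypothesis. This gives a drift containing $v_{t\mu}$, $v_{x\mu}b$, $\tfrac12\,\mathrm{tr}$-terms with $\partial_x^2\partial_\mu v$, $\widetilde{\mathbb{E}}[\widetilde v_{\mu\mu}\widetilde b]$, and $\widetilde{\mathbb{E}}[\mathrm{tr}(\widetilde\sigma\widetilde\sigma^\top \widetilde v_{y\mu\mu})]$, with martingale part $v_{x\mu}\sigma\,dB$.

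I would substitute the first-order condition into this drift. Lemma \ref{lem:1} handles commuting $\partial_x$ and $\partial_\mu$. Lemma \ref{lem:3} gives $\widetilde v_{\mu\mu}=(\widetilde v^\ast_{\mu\mu})^\top$ and the symmetry of $v_{x\mu}$, so the mismatched terms cancel. The second-order terms cancel pairwise as well. What remains should be the mean-field BSDE
\[
-dv_\mu=-\Big\{b_x^\top(-v_\mu)+\widetilde{\mathbb{E}}[\widetilde b^{\ast\top}_\mu(-\widetilde v_\mu)]+\textstyle\sum_j[\sigma^j]_x^\top(-v_{x\mu}\sigma^j)+\dots-f_x-\widetilde{\mathbb{E}}[\widetilde f^\ast_\mu]\Big\}dt+(-v_{x\mu}\sigma)\,dB .
\]
Its terminal value must also match. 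From $v(T,\mu)=\mathbb{E}[h(\xi,\mu)]$ one gets $\partial_\mu v(T,\mu;x)=h_x(x,\mu)+\widetilde{\mathbb{E}}[\partial_\mu h(\widetilde\xi,\mu;x)]$, which equals $-p_T$. Uniqueness for \eqref{bansui:1} (Theorem 3.1 of \cite{buckdahn2009mean}) then gives $(-v_\mu,-v_{x\mu}\sigma)=(p,q)$.

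I expect the main obstacle to be the bookkeeping in the derivative of $\Phi$. Several points need care:
\begin{itemize}
\item justifying that perturbing the law while keeping $\mathbf{u}^\ast_t$ is admissible, which needs $\eta$ independent and $\mathcal{F}_t$-compatible;
\item differentiating under expectations when $\mathbf{u}^\ast_t$ is merely measurable;
\item matching each term after the copy-swap via Lemma \ref{lem:3}.
\end{itemize}
The terms most likely to fail to cancel are the second-order ones involving $\sigma\sigma^\top\partial_x^2\partial_\mu v$ and $\widetilde v_{y\mu\mu}$.
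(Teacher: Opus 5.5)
Your plan follows the paper's proof step for step, except at one point, and there it breaks. The shared steps are: Theorem \ref{ver}(iii) for \eqref{conection:2}; the HJB inequality \eqref{proof:1} with frozen $\mathbf{u}^{\ast}_t$ plus equality at $\mathbb{P}_{X^{\ast}_t}$, giving \eqref{first-m-c}; It\^o's formula for $v_\mu(t)$; Lemmas \ref{lem:1}--\ref{lem:3} to match $\widetilde v^{\ast\top}_{\mu\mu}$ with $\widetilde v_{\mu\mu}$ and the trace terms; the terminal value; and uniqueness for \eqref{bansui:1}.

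The point where you deviate is that you require the direction $\eta$ to be independent of $\mathcal{F}_t$. As stated this is vacuous, since $\mathcal{G}\subset\mathcal{F}_t$ forces such an $\eta\in L^\infty(\mathcal{G})$ to be a.s. constant. A genuinely independent direction would also be useless. The first-order condition has the form $\mathbb{E}[Z_t\cdot\eta]=0$, where $Z_t$ is the $\mathcal{F}_t$-measurable vector (a function of $X^{\ast}_t,\mathbf{u}^{\ast}_t$) that the paper sets equal to zero right after \eqref{first-m-c}. Independence reduces this to $\mathbb{E}[Z_t]\cdot\mathbb{E}[\eta]=0$. Letting $\eta$ range over all such directions then yields only $\mathbb{E}[Z_t]=0$, not the $\mathbb{P}$-a.s. identity. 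You need that a.s. identity to turn the drift of $v_\mu(t)$ into the drift of \eqref{bansui:1}, so without it the uniqueness step has nothing to act on.

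Independence is also unnecessary for the admissibility reason you gave. For any $\eta\in L^\infty(\mathcal{F}_t;\mathbb{R}^d)$, the expectation of $G$ at $(X^{\ast}_t+\varepsilon\eta,\mathbf{u}^{\ast}_t)$ depends only on the joint law of this pair. By the richness of $\mathcal{G}$ (Remark \ref{rem:independ}), that law is realized by a $\mathcal{G}$-measurable pair $(\gamma,\mathcal{u})$ with $\mathcal{u}\in\mathcal{M}_t$. So \eqref{proof:1} gives $\Phi\le 0$ along the whole perturbation, with equality at $\varepsilon=0$.

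So take directions correlated with the state, e.g. $\eta=\phi(X^{\ast}_t,\mathbf{u}^{\ast}_t)$ with $\phi$ bounded Borel. Then $\mathbb{E}[Z_t\cdot\eta]=0$ for all such $\eta$ gives $Z_t=0$ a.s. The paper writes $\theta\in L^\infty(\mathcal{G};\mathbb{R}^d)$; the same computation works verbatim for $\theta\in L^\infty(\mathcal{F}_t;\mathbb{R}^d)$, which is the class that makes the a.s. conclusion immediate.

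With this change, the rest of your outline is the paper's argument. That includes the terminal identity $\partial_\mu v(T,\mu;x)=\partial_x h(x,\mu)+\widetilde{\mathbb{E}}[\partial_\mu h(\widetilde\xi,\mu;x)]$, which gives $v_\mu(T)=-p_T$. Your concern about differentiating with a merely measurable $\mathbf{u}^{\ast}_t$ is harmless: the control is a frozen parameter and the coefficient derivatives are bounded uniformly in $u$.
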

\begin{proof} 
From Theorem \ref{ver} (iii), we can obtain $\left(\ref{conection:2}\right)$. Now let us show (\ref{th:1}). By Theorem \ref{classical} and equation $\left(\ref{hjb:1}\right)$, we have
\begin{equation}
	-\partial_{t}v\left(t,\mu\right)+ \mathbb{E}\left[G\left ( t,\gamma,\mu ,\mathbf{u}_{t}^{\ast},-\partial_{\mu} v\left(t,\mu;\gamma\right),-\partial_{x}\partial_{\mu}v\left(t,\mu;\gamma\right) \right )\right]\le 0,\label{proof:1}
\end{equation}
for any $t\in[s,T]$, $ \mu\in\mathcal{P}_2(\mathbb{R}^d)$, with any $\gamma\in L^2(\mathcal{G} ;\mathbb{R}^{d})$ such that $\mathbb{P}_{\gamma}=\mu.$\par
Noting the first equality in (\ref{conection:2}), we have 
\begin{equation}\label{proof:1:2}
		-\partial_{t}v\left(t,\mathbb{P}_{X_t^{\ast}}\right) + 	\mathbb{E}\left[G\left ( t,X_{t}^{\ast},\mathbb{P}_{X_{t}^{\ast}} ,\mathbf{u}_{t}^{\ast},-\partial_{\mu} v(t,\mathbb{P}_{X_{t}^{\ast}};X_{t}^{\ast}),-\partial_{x}\partial_{\mu}v(t,\mathbb{P}_{X_{t}^{\ast}};X_{t}^{\ast}) \right )\right]=0.
\end{equation}
Consequently, if $v\in C_{b,loc}^{1,3}\left(\left[0,T\right]\times\mathcal{P}_{2}(\mathbb{R}^{d})\right)$, for all $t\in[s,T]$, and for any $\varepsilon>0,$ $\theta\in L^{\infty} \left ( \mathcal{G};\mathbb{R}^{d} \right )$, we denote $X_t^{\varepsilon,\theta}:=X_{t}^{\ast}+\varepsilon\theta$, then we have
\begin{equation}\label{first-m-c}
	\begin{aligned}
		&\dfrac{d}{d\varepsilon}\left\{-\partial_{t}v\big(t,\mathbb{P}_{X_t^{\varepsilon,\theta}}\big)\right. \\
		&\left.+ \mathbb{E}\left[ G\big ( t,X_t^{\varepsilon,\theta},\mathbb{P}_{X_t^{\varepsilon,\theta}} ,\mathbf{u}_{t}^{\ast},-\partial_{\mu} v\big(t,\mathbb{P}_{X_t^{\varepsilon,\theta}};X_t^{\varepsilon,\theta}\big),-\partial_{x}\partial_{\mu}v\big(t,\mathbb{P}_{X_t^{\varepsilon,\theta}};X_t^{\varepsilon,\theta}\big) \big)\right]\right\} \bigg| _{\varepsilon=0}=0.
	\end{aligned}	
\end{equation}
Recalling (\ref{Gdefine}), it follows that
\begin{equation*}
	\begin{aligned}
		&\mathbb{E}\left[v_{t\mu}\left(t\right)\cdot\theta  \right]+ \mathbb{E}\bigg[\bigg\{\widetilde{\mathbb{E}}\left[\widetilde{f}_{\mu}^{\ast}\left(t\right)\right]+f_{x}\left(t\right)
		+b_{x}\left(t\right)^{\top}v_{\mu}\left(t\right)
		+\widetilde{\mathbb{E}}\left[\widetilde{b}_{\mu}^{\ast}\left(t\right)^{\top}\widetilde{v}_{\mu}\left(t\right)\right] +v_{x\mu}\left(t\right)^{\top}b\left(t\right)\\
		&
		+\widetilde{\mathbb{E}}\left[\widetilde{v}_{\mu\mu}^{\ast}\left(t\right)^{\top}\widetilde{b}\left(t\right)\right]
		+\frac{1}{2}\sum_{j=1}^{m}\left([\sigma^{j}]_x\left(t\right)\right)^{\top}\left(v_{x\mu}\left(t\right)\sigma\left(t\right)\right)^{j} +\frac{1}{2}\sum_{j=1}^{m}\left([\sigma^{j}]_x\left(t\right)\right)^{\top}\big(v_{x\mu}\left(t\right)^{\top}\sigma\left(t\right)\big)^{j}\\
		&
		+\frac{1}{2}\widetilde{\mathbb{E}}\Big[\sum_{j=1}^{m}\big([\widetilde{\sigma^{j}}]^{\ast}_{\mu}(t)\big)\big(\widetilde{v}_{x\mu}\left(t\right)\widetilde{\sigma}\left(t\right)\big)^{j} \Big]+\frac{1}{2}\widetilde{\mathbb{E}}\Big[\sum_{j=1}^{m}\big([\widetilde{\sigma^{j}}]^{\ast}_{\mu}(t)\big)\big(\widetilde{v}_{x\mu}\left(t\right)^{\top}\widetilde{\sigma}\left(t\right)\big)^{j} \Big]\\
		&
		+\frac{1}{2}tr\big(\sigma\left(t\right)^{\top}v_{xx\mu}\left(t\right)\sigma\left(t\right)\big)\left.\left.+\frac{1}{2}\widetilde{\mathbb{E}}\left[tr\big(\widetilde{\sigma}\left(t\right)^{\top}\widetilde{v}^{\ast}_{\mu x\mu}\left(t\right)\widetilde{\sigma}\left(t\right)\big)\right] \right\}\cdot\theta \right]=0,
	\end{aligned}
\end{equation*}
where
$$
tr\big(\widetilde{\sigma}^{\top}(t)\widetilde{v}^{\ast}_{\mu x\mu}(t)\widetilde{\sigma}(t)\big):=\big(tr(\widetilde{\sigma}^{\top}(t)\widetilde{v}^{\ast}_{\mu^1x\mu}(t)\widetilde{\sigma}(t)),\dots,tr(\sigma^{\top}(t)\widetilde{v}^{\ast}_{\mu^dx\mu}(t)\sigma(t))\big)^{\top},$$
\begin{equation}\label{trv-xxmu}
	tr\big(\sigma^{\top}(t)v_{xx\mu}(t)\sigma(t)\big):=\big(tr(\sigma^{\top}(t)v_{x^1x\mu}(t)\sigma(t)),\dots,tr(\sigma^{\top}(t)v_{x^dx\mu}(t)\sigma(t))\big)^{\top}.
\end{equation}
 From the arbitrariness of $\theta$, we obtain
\begin{equation}\label{first zhankai}
	\begin{aligned}
		&v_{t\mu}\left(t\right)+ \widetilde{\mathbb{E}}\left[\widetilde{f}_{\mu}^{\ast}\left(t\right)\right]+f_{x}\left(t\right)
		+b_{x}\left(t\right)^{\top}v_{\mu}\left(t\right)
		+\widetilde{\mathbb{E}}\left[\widetilde{b}_{\mu}^{\ast}\left(t\right)^{\top}\widetilde{v}_{\mu}\left(t\right)\right] \\
		&+v_{x\mu}\left(t\right)^{\textsuperscript{T}}b\left(t\right)
		+\widetilde{\mathbb{E}}\left[\widetilde{v}_{\mu\mu}^{\ast}\left(t\right)^{\top}\widetilde{b}\left(t\right)\right]
		+\sum_{j=1}^{m}\left([\sigma^{j}]_x\left(t\right)\right)^{\top}\left(v_{x\mu}\left(t\right)\sigma\left(t\right)\right)^{j} \\
		&
		+\widetilde{\mathbb{E}}\Big[\sum_{j=1}^{m}\big([\widetilde{\sigma^{j}}]^{\ast}_{\mu}(t)\big)\big(\widetilde{v}_{x\mu}\left(t\right)\widetilde{\sigma}\left(t\right)\big)^{j} \Big]
		+\frac{1}{2}tr\big(\sigma^{\top}\left(t\right)v_{xx\mu}\left(t\right)\sigma\left(t\right)\big)\\
		&+\frac{1}{2}\widetilde{\mathbb{E}}\left[tr\big(\widetilde{\sigma}^{\top}\left(t\right)\widetilde{v}_{\mu x\mu}^{\ast}\left(t\right)\widetilde{\sigma}\left(t\right)\big)\right]=0,\qquad\qquad \mathbb{P}\mbox{-}a.s.
	\end{aligned}
\end{equation}
Applying It\^o's formula to $v_{\mu}(t)$, we get
\begin{equation}\label{Th1:dvmu}
	\begin{aligned}
		dv_{\mu}\left(t\right) =&v_{t\mu}\left(t\right)dt+\widetilde{\mathbb{E}}\left[\widetilde{v}_{\mu\mu}\left(t\right)\widetilde{b}\left(t\right)\right]dt
		+\widetilde{\mathbb{E}}\Big[\frac{1}{2}tr\big(\widetilde{\sigma}^{\top}\left(t\right)\widetilde{v}_{y\mu\mu}\left(t\right)\widetilde{\sigma}\left(t\right)\big)\Big]dt\\
		&+v_{x\mu}\left(t\right)b\left(t\right)dt+v_{x\mu}\left(t\right)\sigma\left(t\right)dW_t
		+\frac{1}{2}tr\big(\sigma^{\top}\left(t\right)v_{xx\mu}\left(t\right)\sigma\left(t\right)\big)dt,
		\end{aligned}
	\end{equation}
	where $$
	tr\big(\widetilde{\sigma}^{\top}(t)\widetilde{v}_{y\mu\mu}(t)\widetilde{\sigma}(t)\big)=\big(tr\big(\widetilde{\sigma}^{\top}(t)\widetilde{v}_{yy\mu^1}(t)\widetilde{\sigma}(t)\big),\dots,tr\big(\widetilde{\sigma}^{\top}(t)\widetilde{v}_{yy\mu^d}(t)\widetilde{\sigma}(t)\big)\big)^{\top}.
	$$
	In fact, the term $dv_{\mu}\left(t\right)$ is obtained by applying Itô's formula to each component of 
	$v_{\mu}\left(t\right)$. Therefore, one can check that the definition of $tr\big(\sigma^{\top}(t)v_{xx\mu}(t)\sigma(t)\big)$ in (\ref{Th1:dvmu}) should be  $\big(tr(\sigma^{\top}(t)v_{xx\mu^1}(t)\sigma)(t),\dots,tr(\sigma^{\top}(t)\\v_{xx\mu^d}(t)\sigma(t))\big)^{\top},$ which is different from (\ref{trv-xxmu}). However, it can be shown that they are equivalent. Indeed, recalling Lemma \ref{lem:3}, we know $v_{x\mu}(t)$ is a symmetric matrix, thus
	$\partial_{x^i}\partial_{x}\partial_{\mu}v(t,\mathbb{P}_{X_t^{\ast}};X_t^{\ast})=\partial_{x}\partial_{x^i}\partial_{\mu}v(t,\mathbb{P}_{X_t^{\ast}};X_t^{\ast})=\partial_{x}\partial_{x}\partial_{\mu^i}v(t,\mathbb{P}_{X_t^{\ast}};X_t^{\ast}),\text{ for }1\le i \le d,$ which means the equivalence holds.\par
	Moreover, we state that the following equalities hold,
	\begin{equation}\label{Th1:jiaohuan}
		\left\{
		\begin{aligned}
			&v_{t\mu}(t)=v_{\mu t}(t),\\
			&\widetilde{\mathbb{E}}\left[\widetilde{v}_{\mu\mu}^{\ast}\left(t\right)^{\top}\widetilde{b}\left(t\right)\right]=\widetilde{\mathbb{E}}\left[\widetilde{v}_{\mu\mu}\left(t\right)\widetilde{b}\left(t\right)\right],\\
			&\widetilde{\mathbb{E}}\left[tr\big(\widetilde{\sigma}^{\top}\left(t\right)\widetilde{v}_{\mu x\mu}^{\ast}\left(t\right)\widetilde{\sigma}\left(t\right)\big)\right]=\widetilde{\mathbb{E}}\left[tr\big(\widetilde{\sigma}^{\textsuperscript{T}}\left(t\right)\widetilde{v}_{y\mu\mu}\left(t\right)\widetilde{\sigma}\left(t\right)\big)\right].
		\end{aligned}
		\right.
	\end{equation}
	Indeed, recalling Lemma \ref{lem:2}, we have $v_{t\mu}(t)=v_{\mu t}(t)$. Following Lemma \ref{lem:3}, we can obtain 
	$\widetilde{v}_{\mu\mu}\left(t\right)=\partial_{\mu}^{2}v(t,\mathbb{P}_{X_t^{\ast}};X_t^{\ast},\widetilde{X}_t^{\ast})=\partial_{\mu}^{2}v(t,\mathbb{P}_{X_t^{\ast}};\widetilde{X}_t^{\ast},X_t^{\ast})^{\top}=\widetilde{v}_{\mu\mu}^{\ast}\left(t\right)^{\top}
	$
	, which yields $\widetilde{\mathbb{E}}\left[\widetilde{v}_{\mu\mu}^{\ast}\left(t\right)^{\top}\widetilde{b}\left(t\right)\right]=\widetilde{\mathbb{E}}\left[\widetilde{v}_{\mu\mu}\left(t\right)\widetilde{b}\left(t\right)\right]$. Moreover, from Lemma \ref{lem:3}, we also have
	$
	\partial_{\mu}\partial_{\mu^{i}}v(t,\mathbb{P}_{X_t^{\ast}};X_t^{\ast},\widetilde{X}_t^{\ast})=\partial_{\mu^{i}}\partial_{\mu}v(t,\mathbb{P}_{X_t^{\ast}}; \widetilde{X}_t^{\ast},X_t^{\ast})$, for $1\le i \le d.
	$
	It follows directly from the definition of the derivative that
	$
	\partial_{y}\partial_{\mu}\partial_{\mu^{i}}v(t,\mathbb{P}_{X_t^{\ast}};X_t^{\ast},\widetilde{X}_t^{\ast})=\partial_{x}\partial_{\mu^{i}}\partial_{\mu}v(t,\mathbb{P}_{X_t^{\ast}};\\\widetilde{X}_t^{\ast},X_t^{\ast}),1\le i \le d.
	$
	Thus by Lemma \ref{lem:1}, we have
	$
	\widetilde{\mathbb{E}}\left[tr\big(\widetilde{\sigma}^{\top}\left(t\right)\widetilde{v}_{\mu x\mu}^{\ast}\left(t\right)\right.$ $\left.\widetilde{\sigma}(t)\big)\right]=\widetilde{\mathbb{E}}\left[tr\big(\widetilde{\sigma}^{\top}(t)\widetilde{v}_{y\mu\mu}(t)\widetilde{\sigma}(t)\big)\right],
	$
	which means (\ref{Th1:jiaohuan}) holds.\par
	Now, combining (\ref{first zhankai}), (\ref{Th1:dvmu}) and (\ref{Th1:jiaohuan}), we obtain
	\begin{equation*}
		\begin{aligned}
		dv_{\mu}(t)=&-\widetilde{\mathbb{E}}\left[\widetilde{f}_{\mu}^{\ast}\left(t\right)\right]dt-f_{x}\left(t\right)dt-b_{x}\left(t\right)^{\top}v_{\mu}\left(t\right)dt
		\\
		&-\widetilde{\mathbb{E}}\Big[\widetilde{b}_{\mu} ^{\ast}\left(t\right)^{\textsuperscript{T}}\widetilde{v}_{\mu}\left(t\right)\Big] dt-\sum_{j=1}^{m}\left([\sigma^{j}]_x\left(t\right)\right)^{\top}\left(v_{x\mu}\left(t\right)\sigma\left(t\right)\right)^{j}dt\\
		&-\widetilde{\mathbb{E}}\Big[\sum_{j=1}^{m}\left([\widetilde{\sigma}^{j}]_{\mu}^{\ast}\left(t\right)\right)\left(\widetilde{v}_{x\mu}\left(t\right)\widetilde{\sigma}^{\ast}\left(t\right)\right)^{j} \Big]dt
		+v_{x\mu}\left(t\right)\sigma\left(t\right)dB_t.
	\end{aligned}
\end{equation*}
Noting that
$v_{\mu}\left(T\right)=h_x\left(T\right)+\widetilde{\mathbb{E}}\left[\widetilde{h}^{\ast}_{\mu}\left(T\right)\right],
$
by the uniqueness of solution to mean-field type BSDE (\ref{bansui:1}) (see Theorem 3.1 in \cite{buckdahn2009mean}), we obtain
\begin{equation*}
	\left\{
	\begin{aligned}
		&\partial_{\mu}v(t,\mathbb{P}_{X_t^{\ast}};X_t^{\ast})=-p_t\qquad\quad\quad\quad\quad \quad\quad\qquad  \text{for all } t\in\left[s,T\right], \mathbb{P}\mbox{-}a.s. ,  \\
		&\partial_{x}\partial_{\mu}v(t,\mathbb{P}_{X_t^{\ast}};X_t^{\ast})\sigma(t,X_t^{\ast},\mathbf{u}_{t}^{\ast},\mathbb{P}_{X_t^{\ast}})=-q_t,\quad\quad a.e.\text{ } t\in\left[s,T\right], \mathbb{P}\mbox{-}a.s.
	\end{aligned}
	\right. 
\end{equation*}
Thus we have proven (\ref{th:1}).
\end{proof}
In the following, we shall establish the connection between the second-order adjoint equation \eqref{bansui:2} of MP and the derivative of the value function in the case of $d=1$.
\begin{theorem}\label{TH4.2}
	Let Assumption \ref{ass:4} holds, if $v$ $\in$ $C_{b,loc}^{1,4}\left(\left[0,T\right]\times\mathcal{P}_{2}(\mathbb{\mathbb{R}})\right)$, and $\partial_{t}\partial_{x}\partial_{\mu}v$,  $\partial_{x}\partial_{\mu}\partial_{t}v$ are continuous and bounded, then 
	\begin{equation}
		-v_{x\mu}\left(t\right)\ge P_t,\quad \forall t\in\left[s,T\right], \mathbb{P}\mbox{-}a.s.,\label{th:2}
	\end{equation}
	where $(P,Q)$ solves the second-order adjoint equation (\ref{bansui:2}).
\end{theorem}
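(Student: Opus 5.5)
The plan follows the proof of Theorem \ref{Th;4.1}, but goes one order further. In the first-order argument we used that, by Theorem \ref{classical} and (\ref{proof:1}), the functional
\[
\Phi(\mu):=-\partial_t v(t,\mu)+\mathbb{E}\big[G\big(t,\gamma,\mu,\mathbf{u}^\ast_t,-\partial_\mu v(t,\mu;\gamma),-\partial_x\partial_\mu v(t,\mu;\gamma)\big)\big],\qquad \mathbb{P}_\gamma=\mu,
\]
is $\le 0$ everywhere and, by (\ref{proof:1:2}), vanishes at $\mu=\mathbb{P}_{X_t^\ast}$. Hence $\mathbb{P}_{X^\ast_t}$ is a maximizer of $\Phi$. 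Besides the vanishing first variation (\ref{first-m-c}), we therefore also get the second-order condition
\[
\frac{d^2}{d\varepsilon^2}\Phi\big(\mathbb{P}_{X_t^\ast+\varepsilon\theta}\big)\Big|_{\varepsilon=0}\le 0,\qquad \theta\in L^\infty(\mathcal{G};\mathbb{R}).
\]
The regularity $v\in C^{1,4}_{b,loc}$, together with boundedness of $\partial_t\partial_x\partial_\mu v$ and $\partial_x\partial_\mu\partial_t v$, is exactly what is needed to differentiate twice.

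Expanding this second derivative with $d=1$ gives an expression of the form
\[
\mathbb{E}\big[A_t\,\theta^2\big]+\mathbb{E}\widetilde{\mathbb{E}}\big[B_t\,\theta\widetilde\theta\big]\le 0.
\]
Here $A_t$ collects the ``diagonal'' terms: $-\partial_t v_{x\mu}$, $f_{xx}$, and $-b_{xx}v_\mu-2b_xv_{x\mu}-bv_{xx\mu}$ together with the analogous $\sigma$-terms coming from $\frac12\sigma^2 v_{x\mu}$. It also contains the $y\mu$ cross-terms $\widetilde{\mathbb{E}}[\widetilde H^\ast_{y\mu}]$, which enter once $-v_\mu,-v_{x\mu}$ are replaced by $p,q$ through Theorem \ref{Th;4.1}. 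The kernel $B_t$ gathers all terms involving $\partial^2_\mu$-type derivatives in two independent copies, the terms displayed in (\ref{bianfen:2.1}). As in Theorem \ref{Th;4.1}, I would use Lemmas \ref{lem:1}--\ref{lem:3} to symmetrize the variable orderings, and Lemma \ref{lem:2} to interchange $\partial_t$ with $\partial_x\partial_\mu$.

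The main obstacle is eliminating the $B_t$-terms, so that a pointwise inequality for $A_t$ survives. Here I would use Remark \ref{rem:independ}: choose $\theta=\eta\,\zeta$, where $\zeta$ is $\mathcal{G}$-measurable and independent of $X^\ast_t$ (more precisely of $\mathcal{F}_t$, which is possible after re-representing $\xi$ through the richness of $\mathcal{G}$), with $\mathbb{E}\zeta=0$ and $\mathbb{E}\zeta^2=1$, and $\eta$ bounded and $\mathcal{F}_t$-measurable. A localization or density argument in $\theta$ will be needed for this, since $\theta$ must be $\mathcal{G}$-measurable while $\eta$ is only $\mathcal{F}_t$-measurable. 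The cross term then has the factor $\mathbb{E}[\zeta]\widetilde{\mathbb{E}}[\widetilde\zeta]=0$ and vanishes, while the diagonal term becomes $\mathbb{E}[A_t\eta^2]$. By arbitrariness of $\eta$ we conclude $A_t\le 0$ a.s., which I expect to be the delicate step.

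Next I would apply It\^o's formula (\ref{extension ito:1}) to $-v_{x\mu}(t)$, the scalar process $-\partial_x\partial_\mu v(t,\mathbb{P}_{X^\ast_t};X^\ast_t)$, writing $-v_{x\mu}(t)=:\hat P_t$ with martingale part $\hat Q_t\,dB_t$. Comparing its drift with $A_t$, and using $v_\mu=-p$ and $v_{x\mu}\sigma=-q$ from (\ref{th:1}), the inequality $A_t\le0$ should read
\[
d\hat P_t=-\Big\{2b_x\hat P_t+\sigma_x^2\hat P_t+2\sigma_x\hat Q_t+H_{xx}(t)+\widetilde{\mathbb{E}}[\widetilde H^\ast_{y\mu}(t)]-\delta_t\Big\}dt+\hat Q_t\,dB_t,\qquad \delta_t\ge0.
\]
The terminal value is $\hat P_T=-h_{xx}(T)-\widetilde{\mathbb{E}}[\widetilde h^\ast_{y\mu}(T)]=P_T$, obtained by differentiating $v(T,\mu)=\int h\,d\mu$ twice in the L-sense. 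The equation (\ref{bansui:2}) satisfied by $(P,Q)$ is a scalar linear BSDE with the same generator and with $\delta\equiv0$. The comparison theorem for scalar linear BSDEs then yields $\hat P_t\ge P_t$ for all $t$, a.s., which is (\ref{th:2}).
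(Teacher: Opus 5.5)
Your route is the paper's. It takes the second variation (\ref{Bianfen:2}) of the maximality of $\mathbb{P}_{X_t^\ast}$ and kills the $\theta\otimes\widetilde\theta$ and $\widetilde\theta\otimes\overline\theta$ terms with an independent mean-zero multiplier (the paper's $\theta=\alpha\Gamma$ with Rademacher $\alpha$ is your $\eta\zeta$). It then uses Fubini to move the $\widetilde\theta\otimes\widetilde\theta$ terms onto the diagonal, applies It\^o's formula to $-v_{x\mu}(t)$ with (\ref{th:zhuanhuan2}), and compares with (\ref{bansui:2}).

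However, your last step, as written, proves the reverse of (\ref{th:2}). Write the driver of (\ref{bansui:2}) for $d=m=1$ as $g(t,P,Q):=2b_x(t)P+\sigma_x(t)^2P+2\sigma_x(t)Q+H_{xx}(t)+\widetilde{\mathbb{E}}[\widetilde H^\ast_{y\mu}(t)]$, so that $dP_t=-g(t,P_t,Q_t)\,dt+Q_t\,dB_t$. Your display says $\hat P$ solves the BSDE with driver $g-\delta_t\le g$ and the same terminal value $P_T$. The comparison theorem (larger driver gives larger solution) then yields $\hat P_t\le P_t$, not $\hat P_t\ge P_t$.

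The sign of $\delta$ must be the opposite one, and that is what the second-order condition gives once signs are tracked. Since $\Phi=-\partial_tv-\mathbb{E}[b\,v_\mu+\frac12\sigma^2v_{x\mu}+f]$, every diagonal contribution enters $A_t$ with a minus sign, so $-f_{xx}$ rather than $f_{xx}$. Substitute $v_\mu=-p$ and $v_{x\mu}\sigma=-q$ from (\ref{th:1}) and use (\ref{th:zhuanhuan2}). With the correct signs you get $A_t=g(t,\hat P_t,\hat Q_t)-\hat g_t$, where $\hat g_t$ is the drift in $-d\hat P_t=\hat g_t\,dt-\hat Q_t\,dB_t$ from (\ref{campare:2}). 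Thus $A_t\le0$ is exactly (\ref{proofinequ:1}), i.e. $d\hat P_t=-\{g(t,\hat P_t,\hat Q_t)+\delta_t\}dt+\hat Q_t\,dB_t$ with $\delta_t=-A_t\ge0$. Only with this sign does comparison give $-v_{x\mu}(t)\ge P_t$. This matches Example \ref{exa:1}, where $\delta_t>0$ and $P_t<-v_{x\mu}(t)$.

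On the step you call delicate: $\zeta$ cannot be both $\mathcal{G}$-measurable and independent of $\mathcal{F}_t\supset\mathcal{G}$. You only need $\zeta$ independent of $(X_t^\ast,\mathbf{u}_t^\ast,\eta)$. Both (\ref{proof:1}) and (\ref{proof:1:2}) depend only on the joint law of $(X_t^\ast+\varepsilon\theta,\mathbf{u}_t^\ast)$, and $A_t$ is a measurable function of $(X_t^\ast,\mathbf{u}_t^\ast)$. So you may realize that law together with an independent $\zeta$ on $\mathcal{G}$ via Remark \ref{rem:independ}; the realized control is $\mathcal{G}$-, hence $\mathcal{F}_t$-measurable, so (\ref{proof:1}) applies. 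Then take $\eta$ a bounded function of $(X_t^\ast,\mathbf{u}_t^\ast)$, and no density argument is needed.
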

\begin{proof}
	From (\ref{proof:1}) and (\ref{proof:1:2}), we can also obtain that for all $t\in[s,T]$, for any $\varepsilon>0,$ $\theta\in L^{\infty} \left ( \mathcal{G};\mathbb{R}^{d} \right )$,
	\begin{equation}\label{Bianfen:2}
		\begin{aligned}
			\dfrac{d}{d\varepsilon^2}&\left\{-\partial_{t}v\big(t,\mathbb{P}_{X_t^{\varepsilon,\theta}}\big)\right. \\
			&\left.+ \mathbb{E}\left[ G\big ( t,X_t^{\varepsilon,\theta},\mathbb{P}_{X_t^{\varepsilon,\theta}} ,\mathbf{u}_{t}^{\ast},-\partial_{\mu} v\big(t,\mathbb{P}_{X_t^{\varepsilon,\theta}};X_t^{\varepsilon,\theta}\big),-\partial_{x}\partial_{\mu}v\big(t,\mathbb{P}_{X_t^{\varepsilon,\theta}};X_t^{\varepsilon,\theta}\big) \big )\right]\right\} \bigg| _{\varepsilon=0}\le0,
		\end{aligned}	
	\end{equation}
	where $X_t^{\varepsilon,\theta}:=X_{t}^{\ast}+\varepsilon\theta$.\par
	Since
	\begin{equation}\label{bianfen:2.1.1}
		\dfrac{d}{d\varepsilon^2}\big(\partial_{t}v\big(t,\mathbb{P}_{X_t^{\varepsilon,\theta}}\big) \big)\bigg| _{\varepsilon=0}=\mathbb{E}\left[v_{x\mu t}\left(t\right)\cdot\theta\otimes\theta  \right]+\mathbb{E}\big[\widetilde{\mathbb{E}}\big[\widetilde{v}_{\mu\mu t}\left(t\right)\cdot\theta\otimes\widetilde{\theta}\big]\big].\qquad\qquad
	\end{equation}
	Similarly,
	\begin{equation}
		\begin{aligned}\label{bianfen:2.2}
			\dfrac{d}{d\varepsilon^2}&\mathbb{E}\left[f(t,X_t^{\varepsilon,\theta},\mathbb{P}_{X_t^{\varepsilon,\theta}},\mathbf{u}_t^{\ast})\right]\bigg| _{\varepsilon=0}\\
			&
			=\mathbb{E}\left[\widetilde{\mathbb{E}}\left[\overline{\mathbb{E}}\left[f_{xx}\left(t\right)\cdot\theta\otimes\theta+\widetilde{f}_{y\mu}\left(t\right)\cdot\widetilde{\theta}\otimes\widetilde{\theta}+\left(\widetilde{f}_{\mu x}\left(t\right)+\widetilde{f}_{x\mu}(t)\right)\cdot\theta\otimes\widetilde{\theta}+\overline{f}_{\mu\mu}\left(t\right)\cdot\widetilde{\theta}\otimes\overline{\theta}\right]\right]\right],\qquad\quad
		\end{aligned}
	\end{equation}
	\begin{equation}
		\begin{aligned}\label{bianfen:2.3}
			\dfrac{d}{d\varepsilon^2}&\mathbb{E}\left[b(t,X_t^{\varepsilon,\theta},\mathbb{P}_{X_t^{\varepsilon,\theta}},\mathbf{u}_t^{\ast})\cdot \partial_{\mu}v(t,\mathbb{P}_{X_t^{\varepsilon,\theta}};X_t^{\varepsilon,\theta})\right]\bigg| _{\varepsilon=0}\\
			&=\mathbb{E}\left[\widetilde{\mathbb{E}}\left[A_b^1(t)\cdot\theta\otimes\theta+A_b^2(t)\cdot\widetilde{\theta}\otimes\widetilde{\theta}\right]\right]+\mathbb{E}\left[\widetilde{\mathbb{E}}\left[\overline{\mathbb{E}}\left[K_b^1(t)\cdot\theta\otimes\widetilde{\theta}+K_b^2\cdot\widetilde{\theta}\otimes\overline{\theta}\right]\right]\right],\qquad\qquad\quad\qquad\quad
		\end{aligned}
	\end{equation}
	where 
	\begin{equation*}
		\begin{aligned}
			A_b^1(t):=&b_{xx}\left(t\right)v_{\mu}\left(t\right)+2b_{x}\left(t\right)v_{x\mu}\left(t\right)+v_{xx\mu}\left(t\right)b\left(t\right),\\
			A_b^2(t):=&\widetilde{b}_{y\mu}\left(t\right)v_{\mu}\left(t\right)+\widetilde{v}_{y\mu\mu}(t)b\left(t\right),\\
			K_b^1(t):=&\widetilde{b}_{\mu x}\left(t\right)v_{\mu}\left(t\right)+2b_{x}\left(t\right)\widetilde{v}_{\mu\mu}\left(t\right)+\widetilde{b}_{x\mu}\left(t\right)v_{\mu}\left(t\right)+2\widetilde{b}_{\mu}\left(t\right)v_{x\mu}\left(t\right)+\widetilde{v}_{\mu x\mu}(t)b\left(t\right) ,\\
			K_b^2(t):=& \overline{b}_{\mu\mu}\left(t\right)v_{\mu}\left(t\right)+\widetilde{b}_{\mu}\left(t\right)\partial_{\mu}^{2}v\big(t,\mathbb{P}_{X_{t}^{\ast}};X_{t}^{\ast},\overline{X}_{t}^{\ast}\big)+\overline{v}_{\mu\mu\mu}(t)b\left(t\right)+\widetilde{v}_{\mu\mu}\left(t\right)^{\top}\partial_{\mu}b\big(t,X_{t}^{\ast},\mathbb{P}_{X_{t}^{\ast}},u_{t}^{\ast};\overline{X}_{t}^{\ast}\big).
		\end{aligned}
	\end{equation*}
	We also have
	\begin{equation}
		\begin{aligned}\label{bianfen:2.4}
			\dfrac{d}{d\varepsilon^2}&\mathbb{E}\left[\frac{1}{2}\partial_{x}\partial_{\mu}v(t,\mathbb{P}_{X_t^{\varepsilon,\theta}};X_t^{\varepsilon,\theta})\sigma \big ( t,X_t^{\varepsilon,\theta},\mathbb{P}_{X_t^{\varepsilon,\theta}},\mathbf{u}_t^{\ast}\big )\sigma^{\top}\big ( t,X_t^{\varepsilon,\theta},\mathbb{P}_{X_t^{\varepsilon,\theta}},\mathbf{u}_t^{\ast}\big )\right]\bigg| _{\varepsilon=0}\\
			&=\mathbb{E}\left[\widetilde{\mathbb{E}}\left[A_{\sigma}^1(t)\cdot\theta\otimes\theta+A_{\sigma}^2(t)\cdot\widetilde{\theta}\otimes\widetilde{\theta}\right]\right]+\mathbb{E}\left[\widetilde{\mathbb{E}}\left[\overline{\mathbb{E}}\left[K_{\sigma}^1(t)\cdot\theta\otimes\widetilde{\theta}+K_{\sigma}^2\cdot\widetilde{\theta}\otimes\overline{\theta}\right]\right]\right],
		\end{aligned}
	\end{equation}
	where
	\begin{equation*}
		\begin{aligned}
			A_{\sigma}^1(t):=&\frac{1}{2}\sigma\left(t\right)\sigma^{\top}\left(t\right)v_{xxx\mu}\left(t\right)+\sigma_{xx}\left(t\right)v_{x\mu}\left(t\right)\sigma^{\top}\left(t\right)		+\sigma_x\left(t\right)\sigma^{\top}_x\left(t\right)v_{x\mu}\left(t\right)+2\sigma_x\left(t\right)v_{xx\mu}\left(t\right)\sigma^{\top}\left(t\right),\\
			A_{\sigma}^2(t):=&\frac{1}{2}\sigma\left(t\right)\sigma\left(t\right)^{\top}\widetilde{v}_{y\mu x\mu}\left(t\right)+\widetilde{\sigma}_{y\mu}\left(t\right)v_{x\mu}\left(t\right)\sigma^{\top}\left(t\right),\\
			K_{\sigma}^1(t):=&\frac{1}{2}\sigma\left(t\right)\sigma\left(t\right)^{\top}\widetilde{v}_{\mu xx\mu}\left(t\right)+\frac{1}{2}\sigma\left(t\right)\sigma\left(t\right)^{\top}\widetilde{v}_{x\mu x\mu}\left(t\right)+2\sigma_{x}\left(t\right)v_{x\mu}\left(t\right)\widetilde{\sigma}_{\mu}^{\top}\left(t\right)+\widetilde{\sigma}_{\mu x}\left(t\right)v_{x\mu}\left(t\right)\sigma^{\top}\left(t\right)\\
			&+2\sigma_{x}\left(t\right)\widetilde{v}_{\mu x\mu}(t)\sigma^{\top}\left(t\right)+\widetilde{\sigma}_{x\mu}\left(t\right)v_{x\mu}\left(t\right)\sigma^{\top}\left(t\right)+2\widetilde{\sigma}_{\mu}\left(t\right)v_{xx\mu}\left(t\right)\sigma^{\top}\left(t\right),\\
			K_{\sigma}^2(t):=&\frac{1}{2}\sigma\left(t\right)\sigma^{\top}\left(t\right)\overline{v}_{\mu\mu x\mu}\left(t\right)+\overline{\sigma}_{\mu\mu}\left(t\right)v_{x\mu}\left(t\right)\sigma^{\top}\left(t\right)+\widetilde{\sigma}_{\mu}\left(t\right)\partial_{\mu}\partial_{x}\partial_{\mu}v(t,\mathbb{P}_{X_{t}^{\ast}};X_{t}^{\ast},\overline{X}_{t}^{\ast})\sigma^{\top}\left(t\right)\\
			&+\widetilde{\sigma}_{\mu}\left(t\right)v_{x\mu}\left(t\right)\partial_{\mu}\sigma(t,X_{t}^{\ast},\mathbb{P}_{X_{t}^{\ast}},\mathbf{u}_{t}^{\ast};\overline{X}_{t}^{\ast})^{\top}+\partial_{\mu}\partial_{x}\partial_{\mu}v\big(t,\mathbb{P}_{X_{t}^{\ast}};X_{t}^{\ast},\overline{X_{t}^{\ast}}\big)\sigma\left(t\right)\widetilde{\sigma}_{\mu}\left(t\right)^{\top}.
		\end{aligned}
	\end{equation*}
	 Therefore, from (\ref{Bianfen:2}) and combining (\ref{bianfen:2.1.1})-(\ref{bianfen:2.4}), we can obtain
	\begin{equation}
		\begin{aligned}\label{bianfen:2.1}
			&-\mathbb{E}\left[\widetilde{\mathbb{E}}\left[\left(v_{x\mu t}(t)+f_{xx}(t)+A_b^1(t)+A_{\sigma}^1(t)\right)\cdot\theta\otimes\theta+\big(\widetilde{f}_{y\mu}\left(t\right)+A_b^{2}(t)+A_{\sigma}^2(t)\big)\cdot\widetilde{\theta}\otimes\widetilde{\theta}\right]\right]\\
			&-\mathbb{E}\left[\widetilde{\mathbb{E}}\left[\big(\widetilde{v}_{\mu\mu t}\left(t\right)+\widetilde{f}_{\mu x}\left(t\right)+\widetilde{f}_{x\mu}(t)+K_b^1(t)+K_{\sigma}^1\big)\cdot\theta\otimes\widetilde{\theta}\right]\right]\\
			&-\mathbb{E}\left[\widetilde{\mathbb{E}}\left[\overline{\mathbb{E}}\left[\left(\overline{f}_{\mu\mu}\left(t\right)+K_b^2(t)+K_{\sigma}^2\right)\cdot\widetilde{\theta}\otimes\overline{\theta}\right]\right]\right]\le0.
		\end{aligned}
	\end{equation}
	Due to the arbitrariness of $\theta$, we can take it as $\alpha \Gamma$, where $\Gamma$ is arbitrary random variable belonging to $L^{\infty} \left ( \mathcal{G};\mathbb{R}^{d} \right )$, and $\alpha$ is independent of $(X_{t}^{\ast}, u_t^{\ast},\Gamma)$ with $\mathbb{P} (\alpha=1)=\mathbb{P}(\alpha=-1)=\frac{1}{2}$. We point out that the existence of $\alpha$ is guaranteed by Remark \ref{rem:independ}. Then, we can get rid of the terms containing $\theta\otimes\widetilde{\theta}$ and $\widetilde{\theta}\otimes\overline{\theta}$ in (\ref{bianfen:2.1}). Indeed, when we take $\theta$ as $\alpha \Gamma$, (\ref{bianfen:2.1}) can be written as
	\begin{equation}
		\begin{aligned}
			&-\mathbb{E}\left[\widetilde{\mathbb{E}}\left[\left(v_{x\mu t}(t)+f_{xx}(t)+A_b^1(t)+A_{\sigma}^1(t)\right)\cdot \Gamma\otimes\Gamma\right]\right]\mathbb{E}\left[\alpha^2\right]\\
			&-\mathbb{E}\left[\widetilde{\mathbb{E}}\left[\big(\widetilde{f}_{y\mu}\left(t\right)+A_b^{2}(t)
			+A_{\sigma}^2(t)\big)\cdot\widetilde{\Gamma}\otimes\widetilde{\Gamma}\right]\right]\widetilde{\mathbb{E}}\Big[\widetilde{\alpha}^2\Big]\\
			&-\mathbb{E}\left[\widetilde{\mathbb{E}}\left[\big(\widetilde{v}_{\mu\mu t}\left(t\right)+\widetilde{f}_{\mu x}\left(t\right)+\widetilde{f}_{x\mu}(t)+K_b^1(t)+K_{\sigma}^1\big)\cdot\Gamma\otimes\widetilde{\Gamma}\right]\right]\mathbb{E}\Big[\alpha\Big]\widetilde{\mathbb{E}}\Big[\widetilde{\alpha}\Big]\\
			&-\mathbb{E}\left[\widetilde{\mathbb{E}}\left[\overline{\mathbb{E}}\left[\left(\overline{f}_{\mu\mu}\left(t\right)+K_b^2(t)+K_{\sigma}^2\right)\cdot\widetilde{\Gamma}\otimes\overline{\Gamma}\right]\right]\right]\widetilde{\mathbb{E}}\Big[\widetilde{\alpha}\Big]\overline{\mathbb{E}}\Big[\overline{\alpha}\Big]\le0.
		\end{aligned}
	\end{equation}
	Thus, from that $\mathbb{E}\left[\alpha^2\right]=1$ and $\mathbb{E}\left[\alpha\right]=0$, we obtain 
	\begin{equation*}
		\begin{aligned}
			&-\mathbb{E}\left[\widetilde{\mathbb{E}}\left[\left(v_{x\mu t}(t)+f_{xx}(t)+A_b^1(t)+A_{\sigma}^1(t)\right)\cdot \Gamma\otimes\Gamma\right]\right]\\
			&-\mathbb{E}\left[\widetilde{\mathbb{E}}\left[\big(\widetilde{f}_{y\mu}\left(t\right)+A_b^{2}(t)
			+A_{\sigma}^2(t)\big)\cdot\widetilde{\Gamma}\otimes\widetilde{\Gamma}\right]\right]\le0.
		\end{aligned}
	\end{equation*}
	By applying Fubini's theorem, we have
	\begin{equation}\label{eq-fubini}
		\begin{aligned}
			&-\mathbb{E}\left[\widetilde{\mathbb{E}}\left[\left(v_{x\mu t}(t)+f_{xx}(t)+A_b^1(t)+A_{\sigma}^1(t)+\widetilde{f}^{\ast}_{y\mu}\left(t\right)+\widetilde{A}_b^{2}(t)
			+\widetilde{A}_{\sigma}^2(t)\right)\cdot \Gamma\otimes\Gamma\right]\right]\le0,
		\end{aligned}
	\end{equation}
	where
		\begin{equation*}
		\begin{aligned}
			\widetilde{A}_b^2(t):=&\widetilde{b}^{\ast}_{y\mu}\left(t\right)\widetilde{v}_{\mu}\left(t\right)+\widetilde{v}^{\ast}_{y\mu\mu}(t)\widetilde{b}\left(t\right),\\
			\widetilde{A}_{\sigma}^2(t):=&\frac{1}{2}\widetilde{\sigma}\left(t\right)\widetilde{\sigma}\left(t\right)^{\top}\widetilde{v}^{\ast}_{y\mu x\mu}\left(t\right)+\widetilde{\sigma}^{\ast}_{y\mu}\left(t\right)\widetilde{v}_{x\mu}\left(t\right)\widetilde{\sigma}^{\top}\left(t\right).
		\end{aligned}
	\end{equation*}
	 Recalling the definition (\ref{dao:3}) and the connection (\ref{th:1}), and following (\ref{eq-fubini}) and the arbitrariness of $\Gamma$, we get $\mathbb{P}$-almost surely that
	\begin{equation}
		\begin{aligned}
			&
			-2b_{x}\left(t\right)v_{x\mu}\left(t\right)-\sigma_x\left(t\right)\sigma_x^{\top}\left(t\right)v_{x\mu}\left(t\right) -2v_{xx\mu}\left(t\right)\sigma\left(t\right)\sigma^{\top}_x\left(t\right)+H_{x x}(t)+\mathbb{E}\left[\widetilde{H}_{ y\mu}^*(t)\right]\\
			&
			\le v_{x\mu t}\left(t\right)+v_{xx\mu}\left(t\right)b\left(t\right)+\frac{1}{2}\sigma\left(t\right)\sigma^{\top}\left(t\right)v_{xxx\mu}\left(t\right)+\widetilde{\mathbb{E}}\big[\widetilde{v}^{\ast}_{y\mu\mu}(t)\widetilde{b}\left(t\right)\big]+\frac{1}{2}\widetilde{\mathbb{E}}\big[\widetilde{\sigma}\left(t\right)\widetilde{\sigma}^{\top}\left(t\right)\widetilde{v}^{\ast}_{y\mu x\mu}\left(t\right)\big].
		\end{aligned}\label{proofinequ:1}
	\end{equation}
	
	Now, by applying It\^o's formula to $-v_{x\mu}(t)$ we get
	\begin{equation}\label{campare:2}
		\begin{aligned} 
	-d\left(-v_{x\mu}(t)\right)=&\bigg\{v_{tx\mu}\left(t\right)+v_{xx\mu}\left(t\right)b\left(t\right)
			+\dfrac{1}{2}\sigma\left(t\right)\sigma^{\top}\left(t\right)v_{xxx\mu}\left(t\right)+\widetilde{\mathbb{E}}\left[ \widetilde{v}_{\mu x\mu}(t)\widetilde{b}\left(t\right)\right]
			\\
			&+\dfrac{1}{2}\widetilde{\mathbb{E}}\left[\widetilde{\sigma}\left(t\right)\widetilde{\sigma}^{\top}\left(t\right)\widetilde{v}_{y\mu x\mu}\left(t\right)\right]\bigg\}dt-\sum_{j=1}^{m}\big(-v_{xx\mu}\left(t\right)\sigma^{j}\left(t\right)\big)dB_t^j.
		\end{aligned}
	\end{equation}
	Similarly to (\ref{Th1:jiaohuan}), we can obtain
	\begin{equation}\label{th:zhuanhuan2}
		\left\{
		\begin{aligned}
			&v_{x\mu t}\left(t\right)=v_{tx\mu}\left(t\right),\\
			&\widetilde{\mathbb{E}}\big[\widetilde{v}^{\ast}_{y\mu\mu}(t)\widetilde{b}\left(t\right)\big]=\widetilde{\mathbb{E}}\left[ \widetilde{v}_{\mu x\mu}(t)\widetilde{b}\left(t\right)\right],\\
			&\widetilde{\mathbb{E}}\big[\widetilde{\sigma}\left(t\right)\widetilde{\sigma}^{\top}\left(t\right)\widetilde{v}^{\ast}_{y\mu x\mu}\left(t\right)\big]=\widetilde{\mathbb{E}}\left[\widetilde{\sigma}\left(t\right)\widetilde{\sigma}^{\top}\left(t\right)\widetilde{v}_{y\mu x\mu}\left(t\right)\right].
		\end{aligned}
		\right.
	\end{equation}
	
	Finally, note that $P_T=-v_{x\mu}\left(T\right)=-h_{x x}(T)-\widetilde{\mathbb{E}}\left[\tilde{h}_{\mu y}^*(T)\right]$. Following (\ref{proofinequ:1}) and (\ref{th:zhuanhuan2}), and using the comparison theorem for scalar BSDEs (see Theorem 2.2 of \cite{Pengl1997backwardinfinance}) to (\ref{bansui:2}) and (\ref{campare:2}), we obtain (\ref{th:2}).
\end{proof}
\begin{remark}\label{matrix-BSDE-difficult}
	The proof of the above theorem relies on the comparison theorem for BSDEs. When $d>1$, we notice that both sides of the inequality (\ref{proofinequ:1}) are symmetric matrices. Theorem 3.1 in \cite{hu2006comparison} established the comparison theorem for matrix-valued BSDEs, but due to the complexity of matrix derivatives, we are currently unable to verify whether the conditions of Theorem 3.1 in \cite{hu2006comparison} are satisfied. Therefore, the above theorem can only be given for the case of $d=1$, and the general case is still a challenging problem.
\end{remark}
\begin{remark}\label{tiaojian}
	The assumptions for coefficients $f$ and $h$ in Theorem \ref{Th;4.1} and Theorem \ref{TH4.2} can be suitably relaxed when the initial value $\xi$ of \textbf{Problem} $\bm{(}\bm{M_{s\xi}}\bm{)}$ belongs to $L^\infty(\mathcal{F}_s;\mathbb{R})$. Specifically, let coefficients $b$, $\sigma$ still satisfy Assumption \ref{ass:4}, and $f$, $h$ satisfy the following assumptions.\\
	\textbf{Assumption $\bm{(}\bm{A_{f,h}}\bm{)}$}.
	\begin{enumerate}[(i)]
		\item $f$, $h$ are jointly measurable in all variables, and locally bounded in $(x,\mu)$ uniformly w.r.t. $(t,u)$. Furthermore, there exists a locally bounded function $g:[0,+\infty)\mapsto [0,+\infty)$ such that
		$	\abs{f(t,x,\mu,u)}\le g(W_2(\mu,\delta_0))(1+\abs{x}^2)$, as well as $h(x,\mu)\le g(W_2(\mu,\delta_0))(1+\abs{x}^2)$, for all $(t,x,\mu,u)\in[0,T]\times\mathbb{R}^d\times\mathcal{P}_2(\mathbb{R}^d)\times U$;
		\item $f(t,\cdot, \cdot, u)$, $h(\cdot, \cdot) \in \mathbb{C}_{b,loc}^{2,2}(\mathbb{R}^d \times \mathcal{P}_2(\mathbb{R}^d) )$, for all $(t,u) \in [0,T]\times U$. Moreover, all the derivatives of $f$ and $h$ are   Lipschitz
		continuous with Lipschitz constant independent of $(t,u)$. Additionally, these derivatives are locally bounded in $(x,\mu)$ uniformly w.r.t. $(t,u,y,z)$.
	\end{enumerate}
	Under the above assumptions, we claim that Theorem \ref{MP} and Theorem \ref{DDP:1} still hold. In fact, on the one hand, the maximum principle obtained from \cite{Nie-Yan-Mean-field-partial-information-non-zero-sum} shows that the conditions of $\partial_{x}f$, $\partial_{\mu}f$, $\partial_{x}h$ and $\partial_{\mu}h$ can be extended to be linear growth w.r.t. $(x,\mu)$, which is guaranteed here. In addition, the proof in \cite{Nie-Yan-Mean-field-partial-information-non-zero-sum} relies heavily on the estimation that $\mathbb{E}\left[\sup_{t \in [s,T]}\abs{X_t}^{2k}\right]\le C\varepsilon^k$, $1\le k\le 4$, which can be obtained directly under the assumption that $\xi\in L^\infty(\mathcal{F}_s;\mathbb{R})$. On the other hand, one can check that the assumptions of Corollary 3.9 in \cite{cosso2023optimal} remain valid. Therefore, it can be shown that the proofs of Theorem \ref{Th;4.1} and Theorem \ref{TH4.2} continue to hold when the conditions of $f$ and $h$ in Assumption \ref{ass:4} are relaxed to Assumption $(A_{f,h})$.
\end{remark}
\begin{remark}
	We claim that if the inequality in (\ref{Bianfen:2}) is strictly valid, then the strict inequality \(-\partial_{x}\partial_{\mu}v(t,\mathbb{P}_{X_{t}^{\ast}},X_{t}^{\ast})>P_t\) will occur. In the following examples where we suppose that both the state and Brownian motion are 1-dimensional, i.e., $d=m=1$, we can obtain this result through the strict comparison theorem for scalar BSDEs.
\end{remark}
\begin{example}\label{exa:1}
	Consider the following control system: 
	\begin{equation}
		\left\{\begin{aligned}
			d X_t & =2 \mathbf{u}_t d t+\sqrt{2} d B_t,\qquad t \in[s, T], \\
			X_s & =\xi,\qquad\qquad \xi\in L^\infty(\mathcal{F}_s,\mathbb{R}) ,
		\end{aligned}\right.\label{exa1:1}
	\end{equation}
	with the control domain being $U=[-1,1]$ and the cost functional being 
	\begin{equation}
		\begin{aligned}
			J\left(s,\xi,\mathbf{u}\right)=\mathbb{E}\Big[\int_{s}^{T} 
			&2\mathbb{E}\left[lnch(X_t)+1\right](\mathbf{u}^2_t+1)dr-(lnchX_T+1)\mathbb{E}\left[lnchX_T+1\right]\Big], \label{exa1:2}
		\end{aligned}
	\end{equation}
	where $chx=\dfrac{1}{2}(e^x+e^{-x})$. Thus, we have $h(x,\mu)=-(lnchx+1) 
	\int_{\mathbb{R}}(lnchx+1)\mu(dx)$ and  $f(t,x,\mu,u)=2(u^2+1)\int_{\mathbb{R}}(lnchx+1)\mu(dx)$. One can check that the coefficients $f$, $h$ satisfy the Assumption $(A_{f,h})$ in Remark \ref{tiaojian}.
	For any fixed $(s,\xi)\in[0,T]\times L^\infty(\mathcal{F}_s,\mathbb{R})$ and $\mathbf{u}\in\mathcal{U}$, applying It\^o's formula to the process $(lnch(X_t)+1)\mathbb{E}\left[lnch(X_t)+1\right]$ and combining with (\ref{exa1:1}), we get
	\begin{equation*}
		\begin{aligned}
			&d(lnch(X_t)+1)\mathbb{E}\left[lnch(X_t)+1\right]\\
			=&\left\{th(X_t)\left(2\mathbf{u}_tdt+\sqrt{2}dB_t\right)+\left[ch(X_t)\right]^{-2}dt\right\}\mathbb{E}\left[lnch(X_t)+1\right]\\
			&+(lnch(X_t)+1)\mathbb{E}\left[2th(X_t)\mathbf{u}_t+\left[ch(X_t)\right]^{-2}\right]dt,
		\end{aligned}
	\end{equation*}
	where $th(x)=\dfrac{e^x-e^{-x}}{e^x+e^{-x}}$.
	Then
	\begin{equation*}
		\begin{aligned}
			&\mathbb{E}^2\left[lnch(X_T)+1\right]-\mathbb{E}^2\left[lnch(\xi)+1\right]\\
			=&\mathbb{E}\left[\int_{s}^{T}\left\{2th(X_t)\mathbf{u}_t+\left[ch(X_t)\right]^{-2}\right\}\mathbb{E}\left[lnch(X_t)+1\right]dt\right]
		\end{aligned}
	\end{equation*}	
	\begin{equation*}
		\begin{aligned}
			&+\mathbb{E}\left[\int_{s}^{T}(lnch(X_t)+1)\mathbb{E}\left[2th(X_t)\mathbf{u}_t+\left[ch(X_t)\right]^{-2}\right]dt\right]\\
			=&\mathbb{E}\left[\int_{s}^{T}\left\{4th(X_t)\mathbf{u}_t+2\left[ch(X_t)\right]^{-2}\right\}\mathbb{E}\left[lnch(X_t)+1\right]dt\right].
		\end{aligned}
	\end{equation*}
	From (\ref{exa1:2}) and $th^{2}(x)+ch^{-2}(x)=1 ,$ for any $x\in\mathbb{R}$, we get
	\begin{equation}\label{ex1-cost-function-trans}
		\begin{aligned}
			&J\left(s,\xi,\mathbf{u}\right)+\mathbb{E}^2\left[lnch(\xi)+1\right]\\
			&=\mathbb{E}\left[\int_{s}^{T}\mathbb{E}\left[lnch(X_t)+1\right]\left\{2\mathbf{u}^2_t+2-4th(X_t)\mathbf{u}_t-2\left[ch(X_t)\right]^{-2}\right\}\right]dt\\
			&=2\mathbb{E}\left[\int_{s}^{T}\mathbb{E}\left[lnch(X_t)+1\right]\left\{\mathbf{u}_t-th(X_t)\right\}^2\right]dt\ge0,
		\end{aligned}
	\end{equation}
	where $X$ is the solution of (\ref{exa1:1}).  The inequality \eqref{ex1-cost-function-trans} will become equality if and only if $\mathbf{u}_t=th(X_t)$. Thus it follows that 
	$
		v(s,\mathbb{P}_{\xi})=-\mathbb{E}^2\left[lnch(\xi)+1\right],
	$
	which belongs to $C_{b,loc}^{1,4}$$\left(\left[0,T\right]\times\mathcal{P}_{2}(\mathbb{R})\right)$. One can check that $\partial_{t}v(t,\mu)\equiv0$, $\partial_{\mu}v(t,\mu;x)=-2th(x)\int_{\mathbb{R}}(lnch(x)+1)\mu(dx)$ and $\partial_{x}\partial_{\mu}v(s,\mu;x)=-2\left[ch(x)\right]^{-2}\int_{\mathbb{R}}(lnch(x)+1)\mu(dx)$.
	
	Now let $(X^{\ast},\mathbf{u}^{\ast})$ be an optimal pair for the related mean-field control problem, where $X^{\ast}$ solves (\ref{exa1:1}) and $\mathbf{u}^{\ast}_t= th(X^{\ast}_t)$. 
	To give the first-order adjoint equation, one can check that
	\begin{equation*}
		\left\{
		\begin{aligned}
			&b_x(t)=\widetilde{b}^{\ast}_{\mu}(t)=\sigma_x(t)=\widetilde{\sigma}^{\ast}_{\mu}(t)=f_x(t)=0,\\
			&\widetilde{\mathbb{E}}\left[\widetilde{f}_{\mu}^{\ast}\left(t\right)\right]=2\widetilde{\mathbb{E}}\left[\left\{1+[th(\widetilde{X}^{\ast}_t)]^2\right\}th(X^{\ast}_t)\right] =2\mathbb{E}\left[1+[th(X^{\ast}_t)]^2\right]th(X^{\ast}_t),\\
			&h_x(t)=-th(X^{\ast}_t)\mathbb{E}\left[lnch(X^{\ast}_t)+1\right],\\
			&\widetilde{\mathbb{E}}\left[\widetilde{h}_{\mu}^{\ast}\left(t\right)\right]=-\widetilde{\mathbb{E}}\left[(lnch(\widetilde{X}^{\ast}_t)+1)th(X^{\ast}_t)\right]=-th(X^{\ast}_t)\mathbb{E}\left[lnch(X^{\ast}_t)+1\right].
		\end{aligned}
		\right.
	\end{equation*}
	Then, it follows that
	\begin{equation}
		\left\{
		\begin{aligned}
			dp_{t} =
			&2\mathbb{E}\left[1+[th(X^{\ast}_t)]^2\right]th(X^{\ast}_t) dt+q_{t}dB_{t},
			\\
			p_{T}=&2th(X^{\ast}_T)\mathbb{E}\left[lnch(X^{\ast}_T)+1\right].
		\end{aligned}
		\right.\label{exa1 bansui:1}
	\end{equation}
	Inspired by the terminal form of (\ref{exa1 bansui:1}), we apply It\^o's formula to $th(X^{\ast}_t) \mathbb{E}\left[lnch(X^{\ast}_t)+1\right]$, it follows that
	\begin{equation*}
		\begin{aligned}
			&d th(X^{\ast}_t)\mathbb{E}\left[lnch(X^{\ast}_t)+1\right]\\
			=&\mathbb{E}\left[lnch(X^{\ast}_t)+1\right]\left[ch(X^{\ast}_t)\right]^{-2}\left[\left(2\mathbf{u}^{\ast}_t-2th(X_t^{\ast})\right)dt+\sqrt{2}dB_t\right]\\
			&+th(X^{\ast}_t)\mathbb{E}\left[2th(X^{\ast}_t)\mathbf{u}^{\ast}_t+\left[ch(X^{\ast}_t)\right]^{-2}\right]dt.
		\end{aligned}
	\end{equation*}
	The uniqueness of the adapted solution $(p,q)$ to the first-order adjoint equation  (\ref{exa1 bansui:1}) yields 
	\begin{equation*}
		\left\{
		\begin{aligned}
			&p_t=2\mathbb{E}\left[lnch(X_t^{\ast})+1\right]th(X^{\ast}_t),\\
			&q_t=2\sqrt{2}\left[ch(X^{\ast}_t)\right]^{-2}\mathbb{E}\left[lnch(X^{\ast}_t)+1\right].
		\end{aligned}\right.
	\end{equation*}
	Therefore (\ref{th:1}) holds. To give the second-order adjoint equation, one can check that 
	\begin{equation*}
		\left\{
		\begin{aligned}
			&H_{xx}(t)=0,\\
			&\widetilde{\mathbb{E}}\left[\widetilde{H}_{y\mu}^{\ast}(t)\right]=-\widetilde{\mathbb{E}}\left[\widetilde{f}_{y\mu}^{\ast}(t)\right]=-2\widetilde{\mathbb{E}}\left[\left\{1+[th(\widetilde{X}^{\ast}_t)]^2\right\}[ch(X^{\ast}_t)]^{-2}\right],\\
			&\widetilde{\mathbb{E}}\left[\widetilde{h}_{y\mu}^{\ast}(t)\right]=-\widetilde{\mathbb{E}}\left[(lnch(\widetilde{X}^{\ast}_t)+1)[ch(X^{\ast}_t)]^{-2}\right],\\	
			&h_{xx}(t)=-\mathbb{E}\left[(lnch(\widetilde{X}^{\ast}_t)+1)\right][ch(X^{\ast}_t)]^{-2}.
		\end{aligned}
		\right.
	\end{equation*}
	 Then, we get 
	\begin{equation}
		\left\{\begin{aligned}
			d P_t= 
			& 2\mathbb{E}\left[\left\{1+[th(X^{\ast}_t)]^2\right\}\right][ch(X^{\ast}_t)]^{-2}d t+Q_t d B_t ,\\
			P_T= &2\mathbb{E}\left[lnch(X^{\ast}_T)+1\right] [ch(X^{\ast}_T)]^{-2}.
		\end{aligned}\right.\label{bansui:3}
	\end{equation}
	Next, we denote\\$$\mathcal{P}_t:=-v_{x\mu}(t,\mathbb{P}_{X^{\ast}_t};X^{\ast}_t)=2[ch(X^{\ast}_t)]^{-2}\mathbb{E}\left[lnch(X^{\ast}_t)+1\right] ,$$    $$\mathcal{Q}_t:=-4\sqrt{2}[ch(X^{\ast}_t)]^{-3}sh(X^{\ast}_t)\mathbb{E}\left[lnch(X^{\ast}_t)+1\right].$$
	\\Applying It\^o's formula to $\mathcal{P}_t$ and using $sh(x)=th(x)ch(x)= \frac{1}{2}(e^x-e^{-x})$, we obtain
	\begin{equation*}
		\begin{aligned}
			d\mathcal{P}_t=&2\mathbb{E}\left[lnch(X^{\ast}_t)+1\right]\left\{-2[ch(X_t^{\ast})]^{-3}sh(X_t^{\ast})\left[2th(X_t^{\ast})dt+\sqrt{2}dB_t\right]\right.\\
			&+\left[6[ch(X_t^{\ast})]^{-4}[sh(X_t^{\ast})]^{2}-2[ch(X_t^{\ast})]^{-2}\right]dt\Big\}+2[ch(X^{\ast}_t)]^{-2}\mathbb{E}\left[2[th(X^{\ast}_t)]^{2}+[ch(X^{\ast}_t)]^{-2}\right]dt\\
			=&2\mathbb{E}\left[lnch(X^{\ast}_t)+1\right]\left\{\left[-4[ch(X_t^{\ast})]^{-2}[th(X_t^{\ast})]^{2}+6[ch(X_t^{\ast})]^{-2}[th(X_t^{\ast})]^{2}\right.\right.\\
			&\left.\left.-2[ch(X_t^{\ast})]^{-2}\right]dt-2\sqrt{2}[ch(X_t^{\ast})]^{-3}sh(X_t^{\ast})dB_t\right\}+2[ch(X^{\ast}_t)]^{-2}\mathbb{E}\left[[th(X^{\ast}_t)]^{2}+1\right]dt\\
			=&\left\{-4\mathbb{E}\left[lnch(X^{\ast}_t)+1\right][ch(X^{\ast}_t)]^{-4}+2[ch(X^{\ast}_t)]^{-2}\mathbb{E}\left[[th(X^{\ast}_t)]^{2}+1\right]\right\}dt\\
			&-4\sqrt{2}\mathbb{E}\left[lnch(X^{\ast}_t)+1\right][ch(X_t^{\ast})]^{-3}sh(X_t^{\ast})dB_t.
		\end{aligned}
	\end{equation*}
	Thus we have
	\begin{equation}
		\left\{\begin{aligned}
			d\mathcal{P}_t= 
			&g(t,X^{\ast}_t,\mathbb{P}_{X^{\ast}_t})d t+ \mathcal{Q}_t d B_t ,\\
			\mathcal{P}_T= &2\mathbb{E}\left[lnch(X^{\ast}_T)+1\right] [ch(X^{\ast}_T)]^{-2},
		\end{aligned}\right.\label{bansui:4}
	\end{equation}
	where $g(t,x,\mu)=-4[ch(x)]^{-4}\int_{\mathbb{R}}(lnch(x)+1)\mu(x)+2ch(x)^{-2}\int_{\mathbb{R}}(1+[th(x)]^2)\mu(x)$, and one can check that $g(t,X^{\ast}_t,\mathbb{P}_{X^{\ast}_t})- 2\mathbb{E}\left[1+[th(X^{\ast}_t)]^2\right][ch(X^{\ast}_t)]^{-2}=-4\mathbb{E}\left[lnch(X^{\ast}_t)+1\right]ch(X^{\ast}_t)^{-4}<0$. By comparing (\ref{bansui:3}) and (\ref{bansui:4}) which are standard BSDEs, the strictly comparison theorem for scalar BSDEs yields that $P_t< -v_{x\mu}\left(t\right)$.\par
	From another perspective, by studying (\ref{Bianfen:2}), for any $\gamma$, $\bar{\gamma}\in L^2(\mathcal{F}_t;\mathbb{R})$, we define
	\begin{equation}
		\begin{aligned}
			\Phi(t, \mathbb{P}_{\gamma},\mathbf{u})&:= -\partial_{t}v(t,\mathbb{P}_{\gamma})+\mathbb{E}\left[G\left(t,\gamma,\mathbb{P}_{\gamma},\mathbf{u}_t,-\partial_{\mu}v\left(t,\mu;\gamma\right),-\partial_{x}\partial_{\mu}v\left(t,\mu;\gamma\right)\right)\right]\\
			&=-2\mathbb{E}\left[lnch(\gamma)+1\right]\mathbb{E}\left[(\mathbf{u}_t-th(\gamma))^2\right].
		\end{aligned}
	\end{equation}
	Then, one can check that 
	 \begin{equation*}
		\left\{
		\begin{aligned}
			\dfrac{d}{d\epsilon}\Phi(t, \mathbb{P}_{\gamma+\epsilon\bar{\gamma}},\mathbf{u})\big| _{\epsilon=0}=&-2\mathbb{E}\left[th(\gamma)\cdot\bar{\gamma}\right]\mathbb{E}\left[(\mathbf{u}_t-th(\gamma))^2\right]\\
			&-4\mathbb{E}\left[lnch(\gamma)+1\right]\mathbb{E}\left[(th(\gamma)-\mathbf{u}_t)[ch(\gamma)]^{-2}\cdot\bar{\gamma}\right],\\
			\dfrac{d}{d\epsilon^2}\Phi(t, \mathbb{P}_{\gamma+\epsilon\bar{\gamma}},\mathbf{u})\big| _{\epsilon=0}=&-2\mathbb{E}\left[[ch(\gamma)]^{-2}\cdot\bar{\gamma}^2\right]\mathbb{E}\left[\left(\mathbf{u}_t-th(\gamma)^2\right)\right]\\
			&-8\mathbb{E}\left[th(\gamma)\bar{\gamma}\right]\mathbb{E}\left[(th(\gamma)-\mathbf{u}_t)[ch(\gamma)]^{-2}\cdot\bar{\gamma}\right]\\
			&-4\mathbb{E}\left[lnch(\gamma)+1\right]\mathbb{E}\left[\left\{[ch(\gamma)]^{-4}-2[ch(\gamma)]^{-2}th(\gamma)\left(th(\gamma)-\mathbf{u}_t\right)\right\}\cdot\bar{\gamma}^2\right].
		\end{aligned}
		\right.
	\end{equation*} 
	Obviously, for any $\gamma$, $\bar{\gamma}\in L^2(\mathcal{F}_t;\mathbb{R})$, if $\mathbf{u}_t=th(\gamma)$, we have $	\dfrac{d}{d\epsilon}\Phi(t, \mathbb{P}_{\gamma+\epsilon\bar{\gamma}},\mathbf{u})\big| _{\epsilon=0}=0$ and $	\dfrac{d}{d\epsilon^2}\Phi(t, \mathbb{P}_{\gamma+\epsilon\bar{\gamma}},\\ \mathbf{u})\big| _{\epsilon=0}=-4\mathbb{E}\left[lnch(\gamma)+1\right]\mathbb{E}\left[[ch(\gamma)]^{-4}\cdot\bar{\gamma}^2\right]<0$. Recalling that $(X^{\ast},\mathbf{u}^{\ast})$ is an optimal pair for related mean-field control problem, where $X^{\ast}$ satisfies (\ref{exa1:1}) and $\mathbf{u}^{\ast}_t= th(X^{\ast}_t)$, thus we get 
	\begin{equation*}
		\left\{
		\begin{aligned}
			&\dfrac{d}{d\epsilon}\Phi(t, \mathbb{P}_{X_t^{\ast}+\epsilon\bar{\gamma}},\mathbf{u}_t^{\ast})\big| _{\epsilon=0}=0,\\
			&\dfrac{d}{d\epsilon^2}\Phi(t, \mathbb{P}_{X_t^{\ast}+\epsilon\bar{\gamma}},\mathbf{u}_t^{\ast})\big| _{\epsilon=0}<0.
		\end{aligned}
		\right.
	\end{equation*} 
	Thus, following the proofs of Theorem \ref{Th;4.1} and Theorem \ref{TH4.2}, we also have (\ref{th:1}) and show $P_t<-v_{x\mu}(t)$.
\end{example}
\begin{remark}
	We can also observe that if the second-order derivative of (\ref{Bianfen:2}) at $\mu=X_t^{\ast}$ equals zero or inequality (\ref{proofinequ:1}) becomes equality, then we can get $P_t= -v_{x\mu}\left(t\right)$ for all $ t\in\left[s,T\right], \mathbb{P}\mbox{-}a.s.$ and $Q_t=-v_{x\mu\mu}(t)\sigma(t)$ for a.e. $t\in\left[s,T\right],$ $\mathbb{P}\mbox{-}a.s.$ The following example is just in such  situation. 
\end{remark}
\begin{example}\label{exa:2}
	Consider the following control system:
	\begin{equation}
		\left\{\begin{aligned}
			d X_t & = \mathbf{u}_t d t+ \mathbf{u}_t d B_t,\qquad t \in[s, T], \\
			X_s & =\xi,\qquad\qquad \xi\in L^\infty(\mathcal{F}_s,\mathbb{R}) ,
		\end{aligned}\right.\label{exa2:1}
	\end{equation}
	with the control domain being $U=[-3,-2]\cup[1,2]$ and the cost functional being 
	\begin{equation}
		\begin{aligned}
			J\left(s,\xi,\mathbf{u}\right)=&\mathbb{E}\left[\int_{s}^{T} \mathbb{E}\left[lnch(X_t)+1\right](-\mathbf{u}^2_t[th(X_t)]^2+2\mathbf{u}_tth(X_t)+2\mathbf{u}_t^2+2\mathbf{u}_t)dr\right. \\
			&-(lnch(X_T)+1)\mathbb{E}\left[lnch(X_T)+1\right]\bigg] .\label{exa2:2}
		\end{aligned}
	\end{equation}
	Thus $h(x,\mu)=-(lnch(x)+1) \int_{\mathbb{R}}(lnch(x)+1)\mu(x)$ and $f(t,x,\mu,u)=(-u^2[th(x)]^2+2uth(x)+2u^2+2u)\int_{\mathbb{R}}(lnch(x)+1)\mu(x)$. Similarly to Example \ref{exa:1}, the coefficients $f$, $h$ satisfy the Assumption $(A_{f,h})$ in Remark \ref{tiaojian}.
	For any
	$(s,\xi) \in [0,T]\times L^\infty(\mathcal{F}_s,\mathbb{R})$ and $\mathbf{u}\in\mathcal{U},$ applying It\^o's formula to the process $(lnch(X_t)+1)\mathbb{E}\left[(lnch(X_t)+1)\right]$ and combining with (\ref{exa2:1}), we get
	\begin{equation*}
		\begin{aligned}
			&d(lnch(X_t)+1)\mathbb{E}\left[lnch(X_t)+1\right]\\
			=&\left\{th(X_t)\left(\mathbf{u}_tdt+u_tdB_t\right)+\frac{1}{2}\left[ch(X_t)\right]^{-2}\mathbf{u}_t^{2}dt\right\}\mathbb{E}\left[lnch(X_t)+1\right]\\
			&+(lnch(X_t)+1)\mathbb{E}\left[th(X_t)\mathbf{u}_t+\frac{1}{2}\left[ch(X_t)\right]^{-2}\mathbf{u}_t^2\right]dt.
		\end{aligned}
	\end{equation*}
	Then
	\begin{equation*}
		\begin{aligned}
			&\mathbb{E}^2\left[lnch(X_T)+1\right]-	\mathbb{E}^2\left[lnch(\xi)+1\right]\\
			=&\mathbb{E}\left[\int_{s}^{T}\mathbb{E}\left[lnch(X_t)+1\right]\left\{th(X_t)\mathbf{u}_t+\frac{1}{2}\left[ch(X_t)\right]^{-2}\mathbf{u}_t^{2}\right\}dt\right]\\
			&+\mathbb{E}\left[\int_{s}^{T}\left(lnch(X_t)+1\right)\mathbb{E}\left[th(X_t)\mathbf{u}_t+\frac{1}{2}\left[ch(X_t)\right]^{-2}\mathbf{u}_t^2\right]dt\right]\\
			=&\mathbb{E}\left[\int_{s}^{T}\mathbb{E}\left[lnch(X_t)+1\right]\left\{2th(X_t)\mathbf{u}_t+\left[ch(X_t)\right]^{-2}\mathbf{u}_t^{2}\right\}dt\right].
		\end{aligned}
	\end{equation*}
	Thus, from (\ref{exa2:2}), we obtain $J\left(s,\xi,\mathbf{u}\right)+\mathbb{E}^2\left[lnch(\xi)+1\right]=\mathbb{E}\left[\int_{s}^{T}\mathbb{E}\left[lnch(X_t)+1\right]\left(\mathbf{u}_t^2+2\mathbf{u}_t\right)dt\right].$
Noting that $U=[-3,-2]\cup[1,2]$, we have 
$\mathbb{E}\left[\int_{s}^{T}\mathbb{E}\left[lnch(X_t)+1\right]\left(\mathbf{u}_t^2+2\mathbf{u}_t\right)dt\right]\ge0,$
and equality holds if and only if $\mathbf{u}_t=-2$. It follows that 
	$
		v(s,\mathbb{P}_{\xi})=-\mathbb{E}^2\left[lnch(\xi)+1\right],
	$
	which belongs to $C_{b,loc}^{1,4}\left(\left[0,T\right]\times\mathcal{P}_{2}(\mathbb{R})\right)$. Thus, one can check that $\partial_{t}v(t,\mu)\equiv0$, $\partial_{\mu}v(t,\mu;x)=-2th(x)\int_{\mathbb{R}}(lnch(x)+1)\mu(dx)$ and $\partial_{x}\partial_{\mu}v(s,\mu;x)=-2\left[ch(x)\right]^{-2}\int_{\mathbb{R}}(lnch(x)+1)\mu(dx)$. \par Now let $(X^{\ast},\mathbf{u}^{\ast})$ be an optimal pair to related mean-field control problem, where $X^{\ast}$ solves (\ref{exa2:1}) with $\mathbf{u}_t^{\ast}= -2$. 
	To give the first-order adjoint equaiton, one can check that
\begin{equation*}
	\left\{
	\begin{aligned}	
	&	b_x(t)=\widetilde{b}_{\mu}^{\ast}(t)=\sigma_x(t)=\widetilde{\sigma}_{\mu}^{\ast}(t)=0,\\
	&f_x(t)=-4[ch(X_t^{\ast})]^{-2}\left\{1+2th(X_t^{\ast})\right\}\mathbb{E}\left[lnch(X_t^{\ast})+1\right],\\	
		&\widetilde{\mathbb{E}}\left[\widetilde{f}_{\mu}^{\ast}\left(t\right)\right]=4\widetilde{\mathbb{E}}\left[\left\{[ch(\widetilde{X}_t^{\ast})]^{-2}-th(\widetilde{X}_t^{\ast})\right\}th(X^{\ast}_t)\right]=4\mathbb{E}\left[[ch(X_t^{\ast})]^{-2}-th(X_t^{\ast})\right]th(X^{\ast}_t),\\
			&h_x(t)=-th(X^{\ast}_t)\mathbb{E}\left[lnch(X^{\ast}_t)+1\right],\\
			&	\widetilde{\mathbb{E}}\left[\widetilde{h}_{\mu}^{\ast}\left(t\right)\right]=-\widetilde{\mathbb{E}}\left[(lnch(\widetilde{X}^{\ast}_t)+1)th(X^{\ast}_T)\right]=-th(X^{\ast}_t)\mathbb{E}\left[lnch(X^{\ast}_t)+1\right].
		\end{aligned}
		\right.	
	\end{equation*}
	Then, it follows that
	\begin{equation}
		\left\{
		\begin{aligned}
			dp_{t} =
			&4\mathbb{E}\left[[ch(X_t^{\ast})]^{-2}-th(X_t^{\ast})\right]th(X^{\ast}_t)\\
			&-4[ch(X_t^{\ast})]^{-2}\left\{1+2th(X_t^{\ast})\right\}\mathbb{E}\left[lnch(X_t^{\ast})+1\right] dt
			+q_{t}dB_{t},
			\\
			p_{T}=&2th(X^{\ast}_T)\mathbb{E}\left[lnch(X^{\ast}_T)+1\right].
		\end{aligned}
		\right.\label{exa2 bansui:1}
	\end{equation}
	To solve (\ref{exa2 bansui:1}), motivated by the terminal form of (\ref{exa2 bansui:1}), we apply It\^o's formula to $2th(X_t^{\ast})\mathbb{E}[lnch(X_t^{\ast})+1]$, and it follows that
	\begin{equation*}
		\begin{aligned}
			&d\left(2th(X_t^{\ast})\mathbb{E}\left[lnch(X_t^{\ast})+1\right]\right)\\
			=&-4[ch(X_t^{\ast})]^{-2}\left\{1+2th(X_t^{\ast})\right\}\mathbb{E}\left[lnch(X_t^{\ast})+1\right] \\
			&+4th(X_t^{\ast})\mathbb{E}\left[[ch(X_t^{\ast})]^{-2}-th(X_t^{\ast})\right]dt-4\mathbb{E}\left[lnch(X_t^{\ast})+1\right][ch(X_t^{\ast})]^{-2}dB_t.\\
		\end{aligned}
	\end{equation*}
	From the uniqueness of the solution to equation (\ref{exa2 bansui:1}), we obtain
	\begin{equation*}
		\left\{
		\begin{aligned}
			&p_t=2th(X_t^{\ast})\mathbb{E}\left[lnch(X_t^{\ast})+1\right],\\
			&q_t=-4[ch(X_t^{\ast})]^{-2}\mathbb{E}\left[lnch(X_t^{\ast})+1\right].
		\end{aligned}\right.
	\end{equation*}
	Therefore (\ref{th:1}) hold. To give the second-order adjoint, one can check that \\
	\begin{equation*}
		\left\{
		\begin{aligned}
			&H_{xx}(t)=-f_{xx}(t)=-\left\{8[ch(X_t^{\ast})]^{-2}th(X_t^{\ast})\left(1+2th(X_t^{\ast})\right)-8[ch(X_t^{\ast})]^{-4}\right\}\mathbb{E}\left[lnch(X_t^{\ast})+1\right]\\
			&\qquad\hspace{0.5em}\hspace{0.3em}=-\mathbb{E}\left[lnch(X_t^{\ast})+1\right]\left\{8[ch(X_t^{\ast})]^{-2}th(X_t^{\ast})+16[ch(X_t^{\ast})]^{-2}-24[ch(X_t^{\ast})]^{-4}\right\},\\
			&\widetilde{\mathbb{E}}\left[\widetilde{H}_{y\mu}^{\ast}(t)\right]=-\widetilde{\mathbb{E}}\left[\widetilde{f}_{y\mu}^{\ast}(t)\right]=-4\widetilde{\mathbb{E}}\left[\left\{[ch(\widetilde{X}_T^{\ast})]^{-2}-th(\widetilde{X}_t^{\ast})\right\}[ch(X^{\ast}_t)]^{-2}\right],\\
			&h_{xx}(t)=-\mathbb{E}\left[lnch(X^{\ast}_t)+1\right] [ch(X^{\ast}_t)]^{-2},\\
			&\widetilde{\mathbb{E}}\left[\widetilde{h}_{y\mu}^{\ast}(t)\right]=-\widetilde{\mathbb{E}}\left[(lnch(\widetilde{X}^{\ast}_t)+1)[ch(X^{\ast}_t)]^{-2}\right].
		\end{aligned}
		\right.
	\end{equation*}
	Then, we get 
	\begin{equation}
		\left\{\begin{aligned}
			d P_t= 
			&g_1(t,X^{\ast}_t,\mathbb{P}_{X^{\ast}_t})d t+Q_t d B_t, \\
			P_T= &2\mathbb{E}\left[lnch(X^{\ast}_T)+1\right] [ch(X^{\ast}_T)]^{-2},
		\end{aligned}\right.\label{exp2:bansui:3}
	\end{equation}
	where
	\begin{flalign*}
		\quad\qquad\quad	g_1(t,x,\mu)=&4[ch(x)]^{-2}\int_{\mathbb{R}}\left([ch(x)]^{-2}-th(x)\right)\mu(dx)+\\
			&8\left(-3[ch(x)]^{-4}+2[ch(x)]^{-2}+th(x)[ch(x)]^{-2}\right)\int_{\mathbb{R}}(lnch(x)+1)\mu(dx).
	\end{flalign*}
	By applying It\^o’s formula to $-v_{x\mu}(t)$, we obtain
	\begin{equation*}
		\begin{aligned}
			&d\left(2[ch(X_t^{\ast})]^{-2}\mathbb{E}\left[lnch(X_t^{\ast})+1\right]\right)\\
			=&\left\{-4[ch(X_t^{\ast})]^{-2}th(X_t^{\ast})(-2dt-2dB_t)-8\left(-2[ch(X_t^{\ast})]^{-2}[th(X_t^{\ast})]^{-2}\right.\right.\\
			&\left.\left.+[ch(X_t^{\ast})]^{-4}\right)dt\right\}\mathbb{E}\left[lnch(X_t^{\ast})+1\right]+4[ch(X_t^{\ast})]^{-2}\mathbb{E}\left[[ch(X_t^{\ast})]^{-2}-th(X_t^{\ast})\right]dt\\
			=&\mathbb{E}\left[lnch(X_t^{\ast})+1\right]\left\{8[ch(X_t^{\ast})]^{-2}th(X_t^{\ast})+16[ch(X_t^{\ast})]^{-2}-24[ch(X_t^{\ast})]^{-4}\right\}\\
			&+4[ch(X_t^{\ast})]^{-2}\mathbb{E}\left[[ch(X_t^{\ast})]^{-2}-th(X_t^{\ast})\right]dt+8\mathbb{E}\left[lnch(X_t^{\ast})+1\right][ch(X_t^{\ast})]^{-2}th(X_t^{\ast})dB_t.
		\end{aligned}
	\end{equation*}
	Thus, from the uniqueness of the solution to (\ref{exp2:bansui:3}), we have
	\begin{equation}
		\left\{
		\begin{aligned}
			&P_t=2[ch(X_t^{\ast})]^{-2}\mathbb{E}\left[lnch(X_t^{\ast})+1\right],\\
			&Q_t=8[ch(X_t^{\ast})]^{-2}th(X_t^{\ast})\mathbb{E}\left[lnch(X_t^{\ast})+1\right],
		\end{aligned}\right.
	\end{equation}
	which yields
	\begin{equation*}
		\left\{
		\begin{aligned}
			P_t&=-v_{x\mu}(t), \qquad\qquad\quad \text{for all }  t\in\left[s,T\right], \mathbb{P}\mbox{-}a.s.\\
			Q_t&=-v_{xx\mu}(t)\sigma(t), \qquad\qquad a.e. \text{ } t\in\left[s,T\right], \mathbb{P}\mbox{-}a.s.
		\end{aligned}
		\right.
	\end{equation*}
	From another perspective, for any $\gamma$, $\bar{\gamma}\in L^2(\mathcal{F}_t;\mathbb{R})$, we define
	 \begin{equation}
	 	\begin{aligned}
	 		\Phi(t, \mathbb{P}_{\gamma},\mathbf{u}):&= -\partial_{t}v(t,\mathbb{P}_{\gamma})+\mathbb{E}\left[G\left(t,\gamma,\mathbb{P}_{\gamma},\mathbf{u}_t,-\partial_{\mu}v\left(t,\mathbb{P}_{\gamma},\gamma\right),-\partial_{x}\partial_{\mu}v\left(t,\mathbb{P}_{\gamma},\gamma\right)\right)\right]\\
	 		&=\mathbb{E}\left[lnch(\gamma)+1\right]\mathbb{E}\left[{-\mathbf{u}_t}^{2} -2\mathbf{u}_t\right].
	 	\end{aligned}
	 \end{equation}
	 Obviously,
	 \begin{equation*}
	 	\left\{
	 	\begin{aligned}
	 		&\dfrac{d}{d\epsilon}\Phi(t, \mathbb{P}_{\gamma+\epsilon\bar{\gamma}},\mathbf{u})\big| _{\epsilon=0}=\mathbb{E}\left[th(\gamma)\cdot\bar{\gamma}\right]\mathbb{E}\left[{-\mathbf{u}_t}^{2} -2\mathbf{u}_t\right],\\
	 		&\dfrac{d}{d\epsilon^2}\Phi(t, \mathbb{P}_{\gamma+\epsilon\bar{\gamma}},\mathbf{u})\big| _{\epsilon=0}=\mathbb{E}\left[[ch(\gamma)]^{-2}\cdot\bar{\gamma}^2\right]\mathbb{E}\left[{-\mathbf{u}_t}^{2} -2\mathbf{u}_t\right].
	 	\end{aligned}
	 	\right.
	 \end{equation*} 
	 Noting that $U=[-3,-2]\cup[1,2]$, we know if $\mathbf{u}_t=\mathbf{u}_t^{\ast}=-2$, then $\dfrac{d}{d\epsilon}\Phi(t, \mathbb{P}_{\gamma+\epsilon\bar{\gamma}},\mathbf{u})\big| _{\epsilon=0}=\dfrac{d}{d\epsilon^2}\Phi(t, \mathbb{P}_{\gamma+\epsilon\bar{\gamma}},\\ \mathbf{u})\big| _{\epsilon=0}\equiv0$.
	 Thus, (\ref{Bianfen:2}) should
	hold with the equality, which can make $(\ref{proofinequ:1})$ hold with equality, then we can get 
	 \begin{equation*}
	 	\left\{
	 	\begin{aligned}
	 		P_t&=-v_{x\mu}(t), \qquad\qquad\quad  \text{for all }  t\in\left[s,T\right], \mathbb{P}\mbox{-}a.s.\\
	 		Q_t&=-v_{x\mu\mu}(t)\sigma(t), \qquad\qquad a.e. \text{ } t\in\left[s,T\right], \mathbb{P}\mbox{-}a.s.
	 	\end{aligned}
	 	\right.
	 \end{equation*}
\section{Conclusion}
In this paper, we develop the connection between the MP and DPP for general mean-field optimal control problems. We emphasize that the diffusion term of the state equation in this paper can depend on the control. We establish the relationship between the value function and the first- and second-order adjoint equations via the BSDE comparison theorem, and obtain the general verification theorem. \par
As shown in Theorem \ref{Th;4.1} and Theorem \ref{TH4.2}, we need to assume that the value function has sufficient smoothness, which is not always satisfied. Hence, in the future, on the one hand,  we hope to study the well-posedness of classical solution to HJB equation (\ref{hjb:1}). On the other hand, we shall investigate the nonsmooth versions of the connection  between the MP and DPP.
\end{example}
\bibliographystyle{plain}
\bibliography{references}
\end{document}